\newcolumntype{P}[1]{>{\centering\arraybackslash}p{#1}}
\newcolumntype{M}[1]{>{\centering\arraybackslash}m{#1}}
\newcommand{\dx}{\mathrm{d}x}
\newcommand{\fmin}{\mathcal{F}_{\text{min}}}
\newcommand{\fmax}{\mathcal{F}_{\text{max}}}
\newcommand{\thr}{{\mathrm{thr}}}
\newcommand{\ath}{{\alpha_\thr}}
\newcommand{\Hdx}[1]{H_{\partial x}^{1,#1}}
\newcommand{\alast}{\alpha^{\mathrm{R}}}
\newcommand{\edit}[1]{#1}
\def\delx{\ensuremath\dfrac{\partial}{\partial x}}
\newcommand{\ats}[3][2]{\alpha_{#3}^{#2}}
\newcommand{\up}[3][2]{u_{#3}^{#2\,+}}
\newcommand{\um}[3][2]{u_{#3}^{#2\,-}}
\newcommand{\brate}[3][2]{b_{#3}^{#2}}
\newcommand{\drate}[3][2]{d_{#3}^{#2}}
\newcommand{\halfarrow}{\rightharpoonup}
\newcommand{\supp}{\text{supp}}
\newcommand{\dcurly}[1]{\{\!\!\{#1\}\!\!\}}
\newcommand{\ml}{\textsc{ml}}
\newcommand{\CFL}{\textsc{CFL}}
\numberwithin{equation}{section}
\theoremstyle{definition}
\newtheorem{case}{Case}
\theoremstyle{plain}
\newtheorem{theorem}{Theorem}[section]
\newtheorem{proposition}[theorem]{Proposition}
\newtheorem{definition}[theorem]{Definition}
\newtheorem{lemma}[theorem]{Lemma}
\newtheorem{remark}[theorem]{Remark}
\newtheorem{discrete}[theorem]{Discrete scheme}
\definecolor{refkey}{rgb}{0,0,1}
\definecolor{labelkey}{rgb}{0,0,1}
\begin{document}
\title{Convergence analysis of a numerical scheme for a tumour growth model}
\author[1]{J\'{e}r\^{o}me Droniou\thanks{mail: \href{mailto:jerome.droniou@monash.edu}{jerome.droniou@monash.edu}}}
\author[2]{Neela Nataraj\thanks{mail: \href{mailto:neela.nataraj@iitb.ac.in}{neela.nataraj@iitb.ac.in}}}
\author[3]{Gopikrishnan C. Remesan\thanks{mail: \href{mailto:gopikrishnan.chirappurathuremesan@monash.edu}{gopikrishnan.chirappurathuremesan@monash.edu}}}
\date{\today}
\affil[1]{\small{School of Mathematics, Monash University, Victoria 3800, Australia}}
\affil[2]{\small{Department of Mathematics, Indian Institute of Technology Bombay, Mumbai, Maharashtra 400076, India}}
\affil[3]{\small{IITB -- Monash Research Academy, Indian Institute of Technology Bombay, Mumbai, Maharashtra 400076, India}}
\maketitle

\begin{abstract}
\edit{We consider a one--spatial dimensional tumour growth model~\cite{breward_2001,breward_2002,breward_2003}  that consists of three dependent variables of space and time: volume fraction of tumour cells, velocity of tumour cells, and nutrient concentration.} The model variables satisfy a coupled system of semilinear advection equation (hyperbolic), simplified linear Stokes equation (elliptic), and semilinear diffusion equation (parabolic) with appropriate conditions on the time--dependent boundary, which is governed by an ordinary differential equation. We employ a reformulation of the model defined in a larger, fixed time--space domain to overcome some theoretical difficulties related to the time--dependent boundary. This reformulation reduces the complexity of the model by removing the need to explicitly track the time--dependent boundary, but nonlinearities in the equations, noncoercive operators in the simplified Stokes equation, and interdependence between the unknown variables still challenge the proof of suitable \emph{a priori} estimates.  A numerical scheme that employs a finite volume method for the hyperbolic equation, a finite element method for the elliptic equation, and a backward Euler in time--mass lumped finite element in space method for the parabolic equation is developed. \edit{We establish the existence of a time interval $(0,T_{\ast})$ over which, using compactness techniques, we can extract a convergent subsequence of the numerical approximations. The limit of any such convergent subsequence is proved to be a weak solution of the continuous model in an appropriate sense, which we call a threshold solution.} Numerical tests and justifications that confirm the theoretical findings conclude the paper.
\end{abstract}

\section{Introduction}
\label{sec:intro}
One spatial dimensional tumour growth models are usually obtained by assuming that a higher spatial dimensional tumour grows radially~\cite{Adam1986229,Byrne20021,Mahmoud20171353,Zhuang201886}. \edit{Such one--dimensional models are much simpler than their intricate higher dimensional versions~\cite{Hogea2006,hubbard,Perfahl,scuime}. However, theoretical and computational difficulties offered by even these simplified one--dimensional versions are severe.} The time--dependent boundary, noncoercive coefficient functions, nonlinearities, and the strong coupling between the equations are a few challenges worth mentioning. \edit{ In this article, we consider a tumour growth model proposed by C.~J.~W.~Breward et al.~\cite{breward_2001,breward_2002,breward_2003}}. The model assumes that the tumour cells (cell phase) are embedded in a fluid medium (fluid phase), see Figure~\ref{fig:rad_sym}. The mechanical interactions between these two phases along with the differential distribution of the limiting nutrient, which is oxygen in this case, cause the growth or depletion of the tumour. The relative volume of the cell phase is called the cell volume fraction, the velocity by which the cells are moving is called the cell velocity, and the concentration of the limiting nutrient is quantified by the oxygen tension; these three time--space dependent variables are denoted by $\check{\alpha},\,\check{u},\,$ and $\check{c}$, respectively. \edit{  Detailed aspects of the modelling can be found in the  works by  C.~J.~W.~Breward et al.~\cite{breward_2003} and H.~Byrne et al.~\cite{byrne_2003}.}

\begin{figure}[htp]
	\resizebox{0.9\textwidth}{!}{
	\centering	
		\begin{minipage}{6cm}
\begin{subfigure}[b]{1\textwidth}
	\includegraphics[scale=0.7]{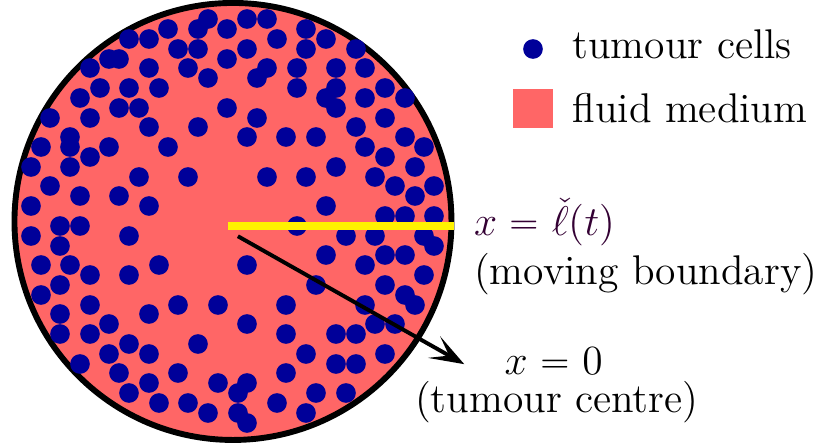}
	\caption{Two--phase model of a radially symmetric tumour.}
	\label{fig:rad_sym}
\end{subfigure}
\end{minipage}
\hspace{0.2cm}
	\begin{minipage}{7cm}
\begin{subfigure}[b]{1\textwidth}
	\centering
\includegraphics[scale=0.8]{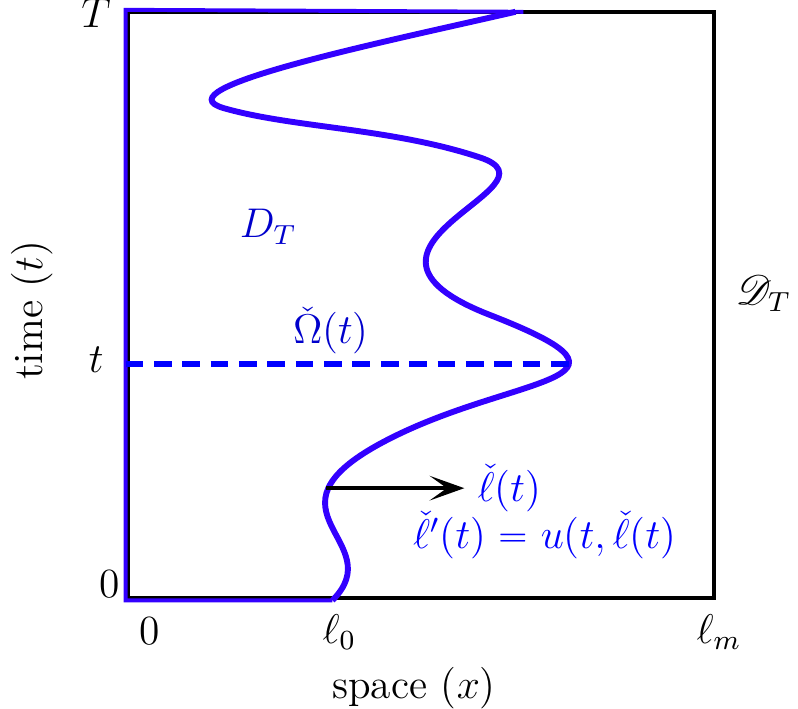}
\caption{Time--space domain $D_T$ and its bounding box $\mathscr{D}_T = (0,T) \times (0,\ell_m)$.}
\label{fig:tum_domains}
\end{subfigure}
\end{minipage}
}
\caption{Radially symmetric tumour and corresponding time--space domains.}
\end{figure}
\medskip
\noindent \textbf{Presentation of the mathematical model}\medskip\\
The tumour growth under the current investigation is over the finite time interval $(0,T)$, where $T > 0$ and \edit{ all the variables and parameters are dimensionless.} Let $\check{\ell} : (0,T) \rightarrow \mathbb{R}$ be a function of time, whose dynamics will be specified later, and set  $\check{\Omega}(t) := (0,\check{\ell}(t))$. Define the time--space domain $D_T: = \cup_{0 < t < T}(\{t\}\times \check{\Omega}(t))$, and its bounding box $\mathscr{D}_T := (0,T) \times (0,{{\ell_m}})$, where ${{\ell_m}} > \check{\ell}(t)$ for $t \in(0,T)$ -- see Figure~\ref{fig:tum_domains}. The unknowns $\check{\alpha}, \check{u}$, and $\check{c}$ are real valued functions defined on $D_T$ and they depend on both space and time. The model seeks variables $(\check{\alpha},\check{u},\check{c},\check{\ell})$ such that,~on $D_T$,
\begin{subequations}
\label{system:nd_sys}
\begin{align}
\label{eqn:vol_fraction}
\dfrac{\partial \check{\alpha}}{\partial t} + \delx (\check{u} \check{\alpha}) &= \check{\alpha} f(\check{\alpha},\check{c}), \\
\label{eqn:cel_velocity}
\dfrac{k \check{u} \check{\alpha}}{1 - \check{\alpha}} - \mu \dfrac{\partial}{\partial x} \left(\check{\alpha} \dfrac{\partial \check{u}}{\partial x}\right) &= -\dfrac{\partial}{\partial x} \left(\mathscr{H}(\check{\alpha}) \right), \\
\label{eqn:oxygen_tension}
\dfrac{\partial \check{c}}{\partial t} - \lambda\dfrac{\partial^2 \check{c}}{\partial x^2} &= -\dfrac{Q \check{\alpha} \check{c}}{1 + \widehat{Q}_1|\check{c}|}, \textrm{ and }  \\
\label{eqn:bd_velocity}
\check{\ell}'(t) &= \check{u}(t,\check{\ell}(t)),
\end{align}
with initial conditions 
\begin{gather}
\check{\alpha}(0,x) = \alpha_0(x),\;\check{c}(0,x) = c_0(x)\;\ \forall x \in \check{\Omega}(0),\quad\;\check{\ell}(0) = \ell_0,
\label{eqn:in_cond}
\end{gather}
and boundary conditions 
\begin{align}
 \label{eqn:bdr_cond_1}
&\check{u}(t,0) = 0,\;\mu \dfrac{\partial \check{u}}{\partial x}(t,\check{\ell}(t)) = \dfrac{(\check{\alpha}(t,\check{\ell}(t))  - \alast)^{+}}{(1 - \check{\alpha}(t,\check{\ell}(t)))^2}, \\
&\dfrac{\partial \check{c}}{\partial x}(t,0) = 0,\; \text{ and }\;\check{c}(t,\check{\ell}(t)) = 1 \quad \forall t \in (0,T).  \label{eqn:bdr_cond_2}
\end{align}
\end{subequations}
Here, 
\[
f(\check{\alpha},\check{c}) := \frac{(1 + s_1)(1 - \check{\alpha})\check{c}}{1 + s_1 \check{c}} - \frac{s_2 + s_3 \check{c}}{1 + s_4 \check{c}},\quad \mathscr{H}(\check{\alpha}) := \frac{\check{\alpha}(\check{\alpha} - \alast)^{+}}{(1 - \check{\alpha})^2},
\]
and $a^{+}$ and $a^{-}$ used in the sequel are defined by $a^{+} := \max(a,0)$ and $a^{-} := -\min(a,0)$. The positive constants $s_1,\,s_2,\,s_3,$ and $s_4$ control the cumulative production rate of the tumour cells, $\check{\alpha} f(\check{\alpha}, \check{c})$. The constant $\alast$ regulates repulsive and attractive interactions between the tumour cells. The positive constant $k$ controls traction between the cell and fluid phases, whereas $\mu$ is the viscosity coefficient in the cell phase. The fluid phase is assumed to be inviscid. The diffusivity coefficient of oxygen is denoted by $\lambda$. The constants $Q$ and $\widehat{Q}_1$ are nonnegative, and control the oxygen consumption rate by the tumour cells. For more details on physical constants, refer to the reviews~\cite{Byrne20061563,Roose2007179} and the references therein. Assume that
\begin{equation}\label{eq:bd.alpha0}
0 < m_{01} \le \alpha_0 \le m_{02}< 1\quad\mbox{on $\check{\Omega}(0)$},
\end{equation}
where $m_{01}$ and $m_{02}$ are constants, that $c_0$ is continuous, and that $0 \leq c_{0} \le 1$ on $\check{\Omega}(0)$. Physical motivations used to obtain the  boundary conditions are briefly sketched in Table~\ref{tab:boundary} of Appendix~\ref{appen_A}.

\edit{The original oxygen source term~$-Q\check{\alpha} \check{c}/ (1+\widehat{Q}_1\check{c})$ of~\cite{breward_2002} is modified in~\eqref{eqn:oxygen_tension} to ensure the nonnegativity of oxygen tension (which represents a concentration). Since we will construct a solution of \eqref{system:nd_sys} such that $\check{c}$ is positive, this substitution does not actually modify the model.} Also, the original source term $(\check{\alpha}- \alast)H(\alpha-\alpha_{\mathrm{min}})$ that appears in~\cite{breward_2002},  where $\alpha_{\emph{min}}$ is a constant and $H(s)=0$ if $s<0$, $H(s)=1$ if $s\ge 0$, is replaced by $(\check{\alpha} - \alast)^{+} = (\check{\alpha}- \alast)H(\alpha-\alast)$ in \eqref{eqn:cel_velocity} (through $\mathscr{H}$) and in  \eqref{eqn:bdr_cond_1}. In the case $\alpha_{\emph{min}}\not= \alast$, the nonlinear term $(\check{\alpha}- \alast)H(\alpha-\alpha_{\mathrm{min}})$ is discontinuous with respect to $\check{\alpha}$, which makes any proof of existence of a solution to~\eqref{system:nd_sys} difficult -- and even questions the well-posedness of the model. The continuity of $(\check{\alpha} - \alast)^{+}$ is essential to obtain a priori estimates (see in particular the proof of Proposition~\ref{prop:alpha_sbv}), and to apply limit arguments to the numerical scheme.

\medskip
\noindent \textbf{Literature}\medskip\\
 Despite the fact that tumour growth models have been popular since the seventies~\cite{Byrne20061563,Roose2007179}, the theoretical literature available on this field is very few. Recently, J. Zheng and S. Cui~\cite{Zheng2019} considered existence of solutions for a tumour growth model with volume fraction and pressure in the tumour region as the unknown variables. The model equations in~\cite{Zheng2019} are fully linear, while the boundary conditions are nonlinear, and a local well-posedness result is proved. A similar linear model is considered by C. Calzada et al.~\cite{CarmenCalzada20111335}, and equivalence to an extended problem in a larger domain is proved. A more advanced model is considered by N. Zhang and Y. Tao~\cite{Zhang2013534}, where the nutrient concentration is also considered as a variable and the existence of solutions is obtained by transforming the fixed domain to a unit ball in $\mathbb{R}$. Studies from the numerical analysis point of view are scarce. J. A. Mackenzie and A. Madzvamuse~\cite{MacKenzie2011212} have shown the convergence of a finite difference scheme for a single variable tumour growth model with a nonlinear source term on a time dependent boundary.

 It is shown in~\cite{remesan_1} that the model~\eqref{system:nd_sys} can be recast into an extended model, where \eqref{eqn:vol_fraction} is set in $\mathscr{D}_T$ with $\check{\alpha}$ being extended by $0$ outside $D_T$, the variable $\check{\ell}$ is eliminated, and the variables $\check{u}$ and $\check{c}$ are extended to $\mathscr{D}_T\setminus D_T$ by $0$ and $1$, respectively. However, this model does not allow any uniform lower bounds on $\check{\alpha}$ inside the computational domain $\mathscr{D}_T$, which means that the velocity equation \eqref{eqn:cel_velocity} can lose its coercivity properties. In the present work, we therefore consider a modification of this extended model, hereafter called the \emph{threshold model}, in which we introduce a (small) threshold which determines the computational domain used for $\check{u}$ and $\check{c}$ (see Figure \ref{fig:tum_domains}). 
\medskip\\
\noindent \textbf{Contributions}\medskip\\
    The formulation of a numerical scheme for the threshold model with a suitable notion of solution, and analysis of the same to obtain the convergence of the iterates, are the primary objectives of this article. This approach has the added benefit of establishing the existence of a solution. The computational cost of re--meshing $\check{\Omega}(t)$ in such a way that an appropriate Courant--Friedrich--Lewy condition (CFL) is satisfied at each time step can be reduced significantly by using the threshold model and extension to a fixed domain~\cite{remesan_1}. We summarise the main contributions of this article here.
   \begin{itemize}[leftmargin=3\labelsep]
   		\setlength\itemsep{-0.2em}
   	\item A numerical scheme based on finite volume and Lagrange $\mathbb{P}^1$--finite element methods is designed such that the physical properties of the system~\eqref{system:nd_sys} are preserved -- in particular, positivity and boundedness of oxygen tension (see Lemma~\ref{lemma:c_pos}) and conservation of mass by volume fraction (see Lemma~\ref{lemma:mass_con}) .
   	\item Bounded variation estimates for the volume fraction, $H^1$ and $L^\infty$ estimates for the cell velocity, and spatial and temporal estimates for the derivatives of oxygen tension are obtained. 
   	\item The convergence analysis of numerical solutions for a tumour growth model that caters for the variables volume fraction, cell velocity and nutrient concentration is studied; to the best of our knowledge, it is the first convergence analysis of this kind.
   	\item It is established that the limit of (any subsequence of) the numerical solutions is indeed a solution to the threshold model, thus proving the existence of a solution for this model.
   	\item Results of numerical experiments that substantiate the theory developed are presented. \newpage
      \end{itemize}
   \noindent \textbf{Organisation}\medskip \\
   This paper is organised in the following way. This section is introductory; in Section~\ref{sec:equivalent}, we define the weak solution to the threshold model and in Section~\ref{sec:discretisation}, a numerical scheme is formulated. In Section~\ref{sec:main_thm}, the main theorems are stated. The compactness and convergence properties of the numerical solutions are derived in Section~\ref{sec:proof_main_1}. In Section~\ref{sec:convergence}, we show that the limit of numerical solutions obtained in Section~\ref{sec:proof_main_1} is a solution to the threshold model in an appropriate sense. In Section~\ref{sec:num_results}, we present numerical results of examples, and discuss the optimal time below which a solution exists. In Section~\ref{discussion},  possible extensions of the current work to other models in single and several spatial dimensions are discussed. We provide the expansions of notations and indexing abbreviations in Appendix~\ref{appen_A}. Mass conservation properties satisfied by the continuous variables of~\eqref{system:nd_sys} and discrete variables in the Discrete scheme~\ref{defn:semi_disc_soln} are presented in  Lemma~\ref{lemma:mass_con} and~\ref{lemma:disc_conser} in Appendix~\ref{appen_C}. The nonnegativity and boundedness satisfied by the oxygen tension is proved in Lemma~\ref{lemma:c_pos}.  A series of classical results used in this article are presented in Appendix~\ref{appen_B}.
   
    This article is set in such a way that an overall reading of Sections~\ref{sec:intro}--\ref{sec:main_thm}, steps \ref{is.1}--\ref{is.4} of Section~\ref{sec:Well_posedness}, steps \ref{cr.1}--\ref{cr.7} of Section~\ref{sec:compactness_est} and steps \ref{ca.1}--\ref{ca.4} of Section~\ref{sec:convergence} helps to understand the gist of the paper. Proofs of the steps mentioned above in their respective sections provide the details. We conclude this section by introducing a few notations used in the article.
\subsubsection*{Notations}

The notation $\nabla_{t,x}$ stands for $(\partial_t,\partial_x)$. The notation $(\cdot,\cdot)_{X}$ is the standard $L^2$ inner product in $X\subset\mathbb{R}^d$, $d \geq 1$. We define the norms $||u||_{0,X} := (u,u)_{X}^{1/2}$ and $||u||_{k,X} := \sum_{\pmb{j}, |\pmb{j}| \leq k} |\partial_x^{\pmb{j}} u|_{0,X}$, where $\pmb{j}$ is a multi--index.  The vector space $\mathbb{P}^1(X)$ is the collection of all polynomials of degree $\le 1$ on $X$.  A consolidated presentation of the continuous and discrete model variables is provided in Table~\ref{tab:not} of Appendix~\ref{appen_A}. For a detailed description of various notions of tumour radii, refer to Table~\ref{tab:tum_len} in Section~\ref{sec:equivalent}.

\section{Threshold model and well-posedness}
\label{sec:equivalent}
We introduce the notion of a threshold solution. A constant and positive parameter, $\ath$, characterises each threshold solution. The source term $\check{\alpha} f(\check{\alpha},\check{c})$ in~\eqref{eqn:vol_fraction} is modified to $(\check{\alpha} - \ath)^{+}f(\check{\alpha},\check{c})$, and the tumour radius at time $t$, $\check{\ell}(t)$, is the smallest number above which the cell volume fraction $\check{\alpha}(t,x)$ is entirely below $\ath$. 
In the limiting case $\ath$ approaches zero, the continuous function $(\check{\alpha} - \ath)^{+}f(\check{\alpha},\check{c})$ approaches $\check{\alpha} f(\check{\alpha},\check{c})$, and the tumour radius is the smallest number above which no tumour cells are present. Theorem~3 in~\cite{remesan_1} proves that the threshold solution with $\ath=0$ and the weak solution of the model~\eqref{system:nd_sys} are equivalent. In fact, this is a consequence of the fact that the weak divergence of the vector field $(\check{\alpha}, \check{u}\check{\alpha})$, which is equal to $-\check{\alpha} f(\check{\alpha},\check{c})$, belongs to $L^2(\mathscr{D}_T)$. Let $B_T$ be defined by $\{(t,\check{\ell}(t)) : t \in (0,T) \}$. The square integrability of the weak divergence of $(\check{\alpha}, \check{u}\check{\alpha})$ implies that the jump in the normal component of $(\check{\alpha}, \check{u}\check{\alpha})$ across $B_T$ is zero, which reduces to $(\check{\alpha}, \check{u}\check{\alpha})_{\vert_{B_T}} \cdot (-\check{\ell}'(t),1) = 0$. From this, condition~\eqref{eqn:bd_velocity} can be deduced, if $\check{\alpha}$ is positive, which is the one of the reasons of why we need to ensure that the discrete and threshold solutions remain positive.

However, in Definition~\ref{defn:ext_soln} we further relax the condition to be satisfied by the tumour radius. In \ref{ts.2}, we only demand that the volume fraction of the tumour cells outside the time--space domain must be less than or equal to  $\ath$ (see Figure~\ref{fig:tum_domains_full}). The convergence analysis is this article assures the existence of such a domain. It remains unsolved whether such a domain is unique and, if at all unique, coincides with the time--space domain wherein the tumour radius satisfies  \eqref{eqn:bd_velocity}. Two different notions of tumour radii are discussed so far and are summarised in Table~\ref{tab:tum_len}.

\begin{table}[h!]
	\centering
	\begin{tabular}{|M{1.4cm}|M{11cm}|}
		\hline
		Notation         &  \multicolumn{1}{>{\centering\arraybackslash}m{110mm}|}{Description} \\ \hline \hline
		$\check{\ell}$   &  $\begin{aligned}
		\text{solution to the ordinary differential equation } \\
		\left\{
		\begin{array}{r l}
		\check{\ell}'(t) &= \check{u}(t,\check{\ell}(t)),  \\
		\check{\ell}(0) &= \ell_0,
		\end{array}
		\right.
		\end{aligned}$     
		
		tumour radius used in the continuous model provided in~\cite{breward_2002}.  \\ \hline
		$\ell$         &   $ \forall\, x \ge \ell,\;\;\alpha(t,x) \le \ath \quad \quad (\star)$           
		
		Condition $(\star)$ is to be satisfied by $\ell(t)$, so that $(\alpha, c, u, \Omega)$ with $\Omega(t) = (0,\ell(t))$ is a Threshold solution  in the sense of Definition~\ref{defn:ext_soln}.          \\ \hline
	\end{tabular}
	\caption{Description of various notions of tumour radii.}
	\label{tab:tum_len}
\end{table}

The introduction of the threshold into the definition of the domain and in the source term helps to obtain boundedness and bounded variation estimates for the numerical solution of~\eqref{eqn:vol_fraction}, and thus enables the numerical scheme to converge to the weak form~\eqref{eqn:weak_vf}. The source term modification is also a way to account for the fact that, in the absence of sufficient amount of cells, the reaction term that drives their growth remains dormant. The details presented in Subsection~\ref{sec:numcom} complement this discussion.

Each threshold solution in the sense of Definition~\ref{defn:ext_soln} corresponds to a pair of prefixed constants $m_{11}$ and $m_{12}$, which ensure the positivity and boundedness (strictly below 1) of the volume fraction in $D_T^{\thr}$ defined by~\ref{ts.2} in Definition~\ref{defn:ext_soln}.  
\begin{figure}[htp]
	\centering
	\includegraphics[scale=0.9]{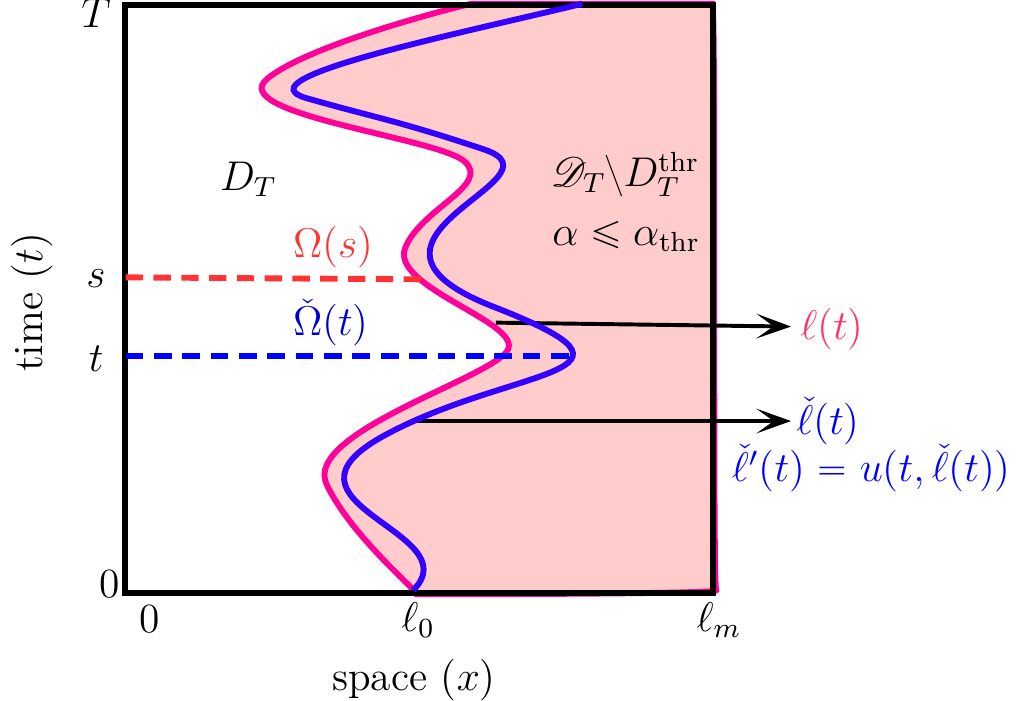}
	\caption{Tumour radii and time--space domains: $D_T$ is time--space domain (region to the left of the blue curve) defined by~\eqref{system:nd_sys}, $D_T^{\thr}$ (region to the left of the pink curve) is the time--space domain defined by the Threshold solution~\ref{defn:ext_soln} and $\mathscr{D}_T$ is the bounding box $(0,T) \times (0,\ell_m)$.}
	\label{fig:tum_domains_full}
\end{figure}

Recall that $(\cdot,\cdot)_{X}$ is the standard $L^2$ inner product on a set $X \subset \mathbb{R}^d$, $d \ge 1$. The domain $D_T^{\thr}$~defined by~\ref{ts.2} in Definition~\ref{defn:ext_soln} is open and bounded. Define the following vector spaces on $D_T^{\thr}$:
\begin{align}
\Hdx{u}(D_{T}^{\thr}) :={}& \{v \in L^2(D_{T}^{\thr})\,:\,\partial_x  v \in L^2(D_T^\thr)\text{ and } v(t,0) = 0\ \forall\,t \in (0,T)\}, \; \text{and}\\
\Hdx{c}(D_{T}^{\thr}) :={}& \{v \in L^2(D_T^\thr)\,:\,\partial_x  v \in L^2(D_T^\thr)\text{ and } v(t,\ell(t)) = 0\ \forall\,t \in (0,T)\}.
\end{align}
Define the inner product on the vector space $\Hdx{\varrho}(D_{T}^{\thr})$, where $\varrho \in \{u,c\}$, as follows: for $w, v \in \Hdx{\varrho}(D_{T}^{\thr})$
	\begin{align}
	\left(w,v \right)_{\Hdx{\varrho}(D_{T}^{\thr})} := (w,v)_{D_{T}^{\thr}} + (\partial_x w,\partial_x v)_{D_{T}^{\thr}} .
	\label{eqn:in_hdx}
	\end{align}
	The inner product~\eqref{eqn:in_hdx} induces a norm $||w||_{\Hdx{\varrho}(D_{T}^{\thr})}$ for which $\Hdx{\varrho}(D_{T}^{\thr})$ is a Hilbert space. Since for each $v\in\Hdx{\varrho}(D_T^{\thr})$ and each $t \in (0,T)$ the time slice $v(t,\cdot)$ belongs to $H^1(0,\ell(t))$, the zeroth order traces are well defined and the quantities $v(t,0)$ and $v(t,\ell(t))$ are meaningful.

\begin{subequations}
\begin{definition}[Threshold solution]
\label{defn:ext_soln}
Let $0 < m_{11} < m_{12} < 1$ be fixed constants that satisfy $m_{11} \le m_{01}$ and $m_{12} \ge m_{02}$, where $m_{01},m_{02}$ satisfy \eqref{eq:bd.alpha0}. Fix a threshold value $\ath \in (0,1)$.  A threshold solution (with threshold $\ath \in (0,1)$) and domain $D_{T}^{\thr}$ of the threshold model in $\mathscr{D}_T$ corresponding to the constants $m_{11}$ and $m_{12}$ is a 4-tuple $(\alpha,u,c,\Omega)$ such that the following conditions hold. 
\begin{enumerate}[label= $\mathrm{(TS.\arabic*)}$,ref=$\mathrm{(TS.\arabic*)}$,leftmargin=\widthof{(TS.4)}+1.2\labelsep]
\item\label{ts.1} The volume fraction $\alpha \in L^{\infty}(\mathscr{D}_T)$ is such that, for all $\varphi \in \mathscr{C}_c^{\infty} ([0,T)\times (0,{{\ell_m}}))$,
\begin{multline}
\label{eqn:weak_vf}
\int_{\mathscr{D}_T} ((\alpha,u \alpha)\cdot\nabla_{t,x}\varphi + (\alpha - \ath)^{+}\,f(\alpha,c)\,\varphi)\,\mathrm{d}t\,\mathrm{d}x \\
+ \int_{\Omega(0)} \varphi(0,x)\,\alpha_{0}(x)\,\mathrm{d}x = 0,
\end{multline}
and it holds $0 < m_{11} \leq \alpha_{\vert{\Omega(t)}} \leq m_{12} < 1$ for every $t \in [0,T)$.

\item\label{ts.2} The set $D_T^{\thr}$ is of the form $D_T^{\thr} = \cup_{0 < t <T}(\{t\} \times \Omega(t))$, where $\Omega(t) = (0,\ell(t))$, and we have $\alpha \le \ath$ on $\mathscr{D}_T\setminus D_T^{\thr}.$

\item\label{ts.3}  The velocity $u$ is such that $u\in \Hdx{u}(D_{T}^{\thr})$ and, for all $v \in \Hdx{u}(D_{T}^{\thr})$,
\begin{equation}
\label{eqn:weak_vel}
\int_{0}^{T}a^{t}(u(t,\cdot),v(t,\cdot))\,\mathrm{d}t = \int_{0}^T\mathcal{L}^{t}(v(t,\cdot))\,\mathrm{d}t,
\end{equation}
where $a^{t} : H^{1}(\Omega(t)) \times H^{1}(\Omega(t)) \rightarrow \mathbb{R}$ is the bilinear form and $\mathcal{L}^t : H^{1}(\Omega(t)) \rightarrow \mathbb{R}$ is the linear form defined by:
\begin{align}
\label{eqn:vel_bifm_con}
a^t(u,v) &=  k\left( \frac{\alpha}{1-\alpha} u,v \right)_{\Omega(t)} + \mu\left(\alpha \partial_x u, \partial_x v\right)_{\Omega(t)} \text{ and } \\
\label{eqn:vel_lfm_con}
\mathcal{L}^t(v) &=  \left( \mathscr{H}(\alpha) , \partial_x v \right)_{\Omega(t)}.
\end{align}
 We extend $u$ to $\mathscr{D}_T$ by setting $u\vert_{\mathscr{D}_T\setminus \overline{D_T^\thr}} := 0$. 
 
\item\label{ts.4} The oxygen tension $c$ is such that $c - 1 \in H_{\partial x}^{1,c}(D_{T}^{\thr})$, $c \ge 0$ and, for all $v \in H_{\partial x}^{1,c}(D_T^\thr)$ such that $\partial_t v\in L^2(D_T^\thr)$, 
\begin{multline}
-\int_{D_T^\thr} c\,\partial_t v\,\mathrm{d}x\,\mathrm{d}t + \lambda \int_{D_T^\thr} \partial_x c\,\partial_x v\,\mathrm{d}x\,\mathrm{d}t - \int_{\Omega(0)} c_{0}(x) v(0,x)\,\mathrm{d}x  \\
\label{eqn:weak_ot}
+ Q\int_{D_T^\thr} \dfrac{\alpha\,c\,v}{1 + \widehat{Q}_1|c|} \,\mathrm{d}x\,\mathrm{d}t = 0.
\end{multline}
We extend $c$ to $\mathscr{D}_T$ by setting $c_{\vert{\mathscr{D}_T\setminus \overline{D_T^\thr}}} := 1$.
\end{enumerate}
\end{definition}
\end{subequations}

\begin{figure}[h!]
	\centering
	\includegraphics[scale=1]{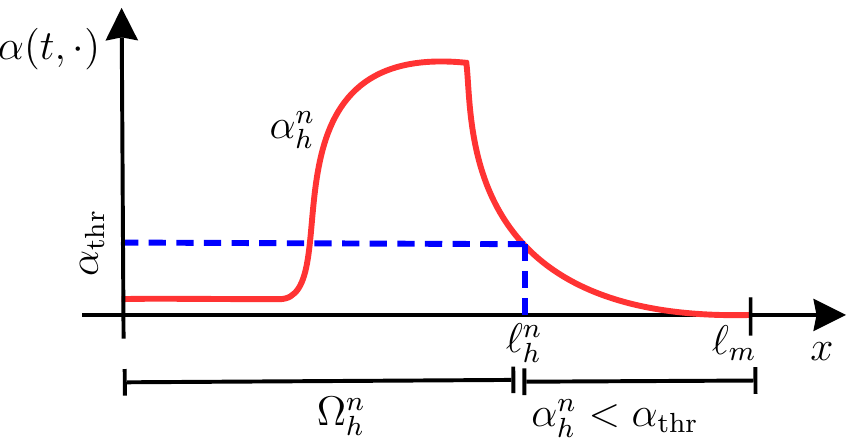}
	\caption{Selection of $\ell_h^n$ based on the value of $\alpha_h^n$.}
	\label{fig:alpha_thrfig}
\end{figure}
Given the bounds of $\alpha$ in Definition~\ref{defn:ext_soln}, it can easily be checked that $a^t$ is uniformly continuous and coercive on $H^1(\Omega(t))$, and that $\mathcal{L}^{t}$ is uniformly continuous on $H^1(\Omega(t))$. To prove existence of a solution for~\eqref{eqn:weak_vf} we need uniform supremum norm bounds on $u$,\,$\partial_x u$~\cite[p.~153]{eymard}, and $c$. Part of the analysis of the model consists in proving that $u$ and $\partial_x u$ satisfy uniform supremum norm bounds at the discrete level, which leads to the existence of a discrete solution for~\eqref{eqn:weak_vf} with uniformly bounded variation, and limit of which is a solution of~\eqref{eqn:weak_vf}. The boundedness of $\alpha$ helps to obtain existence of solutions to~\eqref{eqn:weak_ot}. However, strong convergence of discrete solutions of~\eqref{eqn:weak_vf} is needed to obtain convergence of~\eqref{eqn:weak_vel} and~\eqref{eqn:weak_ot}. It is readily noted that the bounds on $\alpha$, $u$, and $c$ are interdependent, and our analysis also addresses this issue.
 
\section{Discretisation}
\label{sec:discretisation}
We discretise~\eqref{eqn:vol_fraction} using a finite volume method,~\eqref{eqn:cel_velocity} using a Lagrange $\mathbb{P}^1$--finite element method, and~\eqref{eqn:oxygen_tension} using backward Euler in time and $\mathbb{P}^1$--mass lumped finite element method in space. The space and time variables are discretised as follows.
     Let $0 = x_0 < \cdots < x_{J} = {{\ell_m}}$ be a uniform spatial discretisation with $h := x_{j+1} - x_{j}$, and
     $0 = t_0 < \cdots < T_{N} = T$ be a uniform temporal discretisation with $\delta := t_{n+1} - t_{n}$. The numbers $h$ and $\delta$ are called the spatial and temporal discretisation factors.
Define the intervals $\mathcal{X}_j := (x_j,x_{j+1})$ and $\mathcal{T}_n := [t_{n},t_{n+1})$. The node--centred intervals are defined by $\widetilde{\mathcal{X}}_{j} := (x_{j} - h/2,x_{j} + h/2)$ for $j = 1,\ldots,J - 1$, $\widetilde{\mathcal{X}}_{0} := [x_0,x_0 + h/2]$, and $\widetilde{\mathcal{X}}_{J} := [x_{J} - h/2,x_{J}]$. We let $\pmb{\chi}_{\widetilde{\mathcal X}_j}$ be the characteristic function of $\widetilde{\mathcal X}_j$, that is, $\pmb{\chi}_{\widetilde{\mathcal X}_j}=1$ on $\widetilde{\mathcal X}_j$, and $\pmb{\chi}_{\widetilde{\mathcal X}_j}=0$ outside $\widetilde{\mathcal X}_j$. For any real valued function $f$ on $\mathbb{R}$, define the pointwise average $\{\!\!\{f\}\!\!\}_{\mathcal{X}_{j}} = (f(x_{j}) + f(x_{j+1}))/2$.   Define the extended initial data as follows: $\forall\;x \in (0,\ell_m)$
\begin{align}
\alpha_{0}^{\mathrm{e}}(x) := 
\left\{
\begin{array}{c l}
\alpha_0(x) & \text{if}\;\;x \in \Omega(0), \\
0 &  \text{otherwise}.
\end{array}
\right.
\text{ and }\;\;
c_{0}^{\mathrm{e}}(x) := 
\left\{
\begin{array}{c l}
c_0(x) & \text{if}\;\;x \in \Omega(0), \\
1 &  \text{otherwise}.
\end{array}
\right.
\end{align}

\begin{discrete}
Define 
\begin{itemize}
	\setlength\itemsep{-0.1em}
	\item $\alpha_h^0$ by $\alpha_{h}^0 := \alpha_j^{0} = {}{} \frac{1}{h}\int_{\mathcal{X}_j} \alpha_0^{\mathrm{e}}(x)\,\mathrm{d}x$ on $\mathcal{X}_j$ for $0 \leq j \leq J - 1$,
	\item $c_h^0$ by $c_{h}^0 \in \mathbb{P}^1(\mathcal{X}_j)$ for $0 \leq j \leq J-1$ and $c_h^0(x_j) := c_0^{\mathrm{e}}(x_j)$ for $0 \leq j \leq J$, and
	\item $\Omega_{h}^0 := (0,\ell_{h}^0)$, where $\ell_h^0 = 1$.
\end{itemize}
Fix a threshold $\ath\in (0,1)$ and $\ell_m>\ell_0$ such that $(0,\ell_0) \subset (0,\ell_m)$ and $D_T^{\thr} \subset \mathscr{D}_T$.
Obtain $u_h^0$ from \ref{ds.c} below by taking $n = 0$. Then, construct a finite sequence of 3--tuple of functions $\left(\alpha_{h}^n,u_{h}^n,c_{h}^n\right)_{ \{0 < n \leq N-1 \}}$ on $(0,\ell_m)$ as in \ref{ds.a}--\ref{ds.d} described now.
\label{defn:semi_disc_soln}
\begin{enumerate}[label= $\mathrm{(DS.\alph*)}$,ref=$\mathrm{(DS.\alph*)}$,leftmargin=\widthof{(DS.a)}+1.2\labelsep]
	 \item \label{ds.a} Set $\alpha_{h}^n := \alpha_j^{n}$ on $\mathcal{X}_j$ for $0 \leq j \leq J-1$, where \vspace{-0.7em}
		\begin{multline}
	\dfrac{1}{\delta} (\ats{n}{j} - \ats{n-1}{j}) \\
	+ \dfrac{1}{h}{}\left[\up{(n-1)}{j+1}\ats{n-1}{j} - \um{(n-1)}{j+1}\ats{n-1}{j+1} - \up{(n-1)}{j}\ats{n-1}{j-1} + \um{(n-1)}{j}\ats{n-1}{j}\right]  \\
	= (\ats{n-1}{j} - \ath)^{+}(1-\ats{n-1}{j}) b_{j}^{n-1} - (\ats{n}{j} - \ath)^{+}d_{j}^{n-1},
	\label{eqn:upwind}
	\end{multline}
where $u_j^n = u_h^n(x_j)$, $b_{j}^n = \dcurly{(1 + s_1)c_h^n/(1 + s_1c_{h}^n)}_{\mathcal{X}_j}$, and $d_{j}^n = \dcurly{(s_2 + s_3c_h^n)/(1 + s_4c_{h}^n)}_{\mathcal{X}_j}$. Note that, when $j=0$, $u_0^{(n-1)}=0$ and thus the value of $\ats{n-1}{-1}$  can be arbitrarily fixed, say for example $\ats{n-1}{-1} = m_{11}$.
	
	\item \label{ds.b} Set \label{item:al_cut_off} $\Omega_h^n := (0,\ell_{h}^n)$, where the recovered radius at step $n$, $\ell_{h}^{n}$, is provided by  $\ell_{h}^{n} = \min\{ x_j :  \ats{n}{j} < \ath\;\text{ on } (x_j,{{\ell_m}})\}$. 
	
 \item \label{ds.c} Set the conforming $\mathbb{P}^1$ finite element space on $\Omega_h^n$, and its subspace with homogeneous boundary condition at $x = 0$, by
    \begin{align}
    \label{eqn:p1_space}
       \mathcal{S}_{h}^n &:= \left\{v_{h}^n \in \mathscr{C}^0(\overline{\Omega_{h}^n}):\, v_{h\vert_{\mathcal{X}_j}}^n \in \mathbb{P}^1(\mathcal{X}_j) 
        \text{ for }  0 \leq j < J_n:= \ell_h^n/h \right\} \text{ and }\ \\
        \label{eqn:p1_vel_space}
        \mathcal{S}_{0,h}^n &:= \{v_h^n \in \mathcal{S}_{h}^n : v_h^n(0) = 0\}.
    \end{align}
    Then, \vspace{-0.5cm}
	\begin{align}
		u_h^n := \left\{\begin{array}{l l}
    	\widetilde{u}_h^n  & \text{ on } \Omega_h^n, \\
    0 & \text{ on } (0,L)\setminus\overline{\Omega_h^n},
    \end{array}\right. 
    \label{eqn:uhn}
	\end{align}
	where $\widetilde{u}_h^n \in S_{0,h}^n$ satisfies 
    \begin{equation}
        a_{h}^n(\widetilde{u}_h^n,v_{h}^n) = \mathcal{L}_h^n(v_{h}^n)\quad \forall\,v_h^n \in S_{0,h}^n,
        \label{eqn:dweak_vel}
    \end{equation}
    with $a_h^n : \mathcal{S}_h^n \times \mathcal{S}_h^n \rightarrow \mathbb{R}$ and $\mathcal{L}_h^n : \mathcal{S}_h^n \rightarrow \mathbb{R}$ defined by
    \begin{align}
\label{eqn:dbform_vel}
a_h^n(w,v) &= k\left( \frac{\alpha_h^n}{1-\alpha_h^n}w, v \right)_{\Omega_{h}^n} + \mu\left(\alpha_h^n \partial_x w,\partial_x v \right)_{\Omega_{h}^n} \text{ and } \\
\label{eqn:dlform_vel}
\mathcal{L}_h^n(v) &= \left(\mathscr{H}(\alpha_h^n),  \partial_x v\right)_{\Omega_{h}^n}.
\end{align}  
\item \label{ds.d} Define the finite dimensional vector spaces
\begin{align}
\setlength\itemsep{-0.5em}
 \label{eqn:p1_ot_space}
    \mathcal{S}_{h,0}^n &:= \{v_h^n \in \mathcal{S}_{h}^n : v_h^n(\ell_h^n) = 0\} \text{ and } \\
    \label{eqn:p0_space}
    \mathcal{S}_{h,\ml} &:= \bigg\{ w_{h} : w_{h} = \sum_{j=0}^J w_j \pmb{\chi}_{\widetilde{\mathcal{X}}_{j}},\,w_j \in \mathbb{R},\,0 \leq j \leq J \bigg\},
\end{align}
and the mass lumping operator $\Pi_h : \mathscr{C}^0([0,\ell_m]) \rightarrow \mathcal{S}_{h,\ml}$ such that 
$\Pi_h w = \sum_{j=0}^{J} w(x_j) \pmb{\chi}_{\widetilde{\mathcal{X}}_{j}}$.
Then, \vspace{-0.5cm}
\begin{align}
	c_h^n := \left\{\begin{array}{l l}
	\widetilde{c}_h^n &\text{ on } \Omega_h^n, \\
	1 & \text{ on } (0,\ell_m)\setminus \overline{\Omega_h^{n}},
\end{array}\right.
\label{eqn:chn}
\end{align}
	 where $\widetilde{c}_h^{n}\in \mathcal{S}_h^{n}$ satisfies the boundary condition $\widetilde{c}_h^{n}(\ell_h^n) = 1$ and the following discrete equation, in which $\Pi_h \widetilde{c}_h^n := \left( \Pi_h c_h^n \right)_{\vert \Omega_h^n}$: for all $v_h^{n} \in S_{h,0}^{n}$, it holds
\begin{align}
    (\Pi_h \widetilde{c}_h^{n}, \Pi_h v_{h}^{n})_{\Omega_h^n} &- (\Pi_h c_h^{n-1}, \Pi_h v_{h}^{n})_{\Omega_h^n} + \delta \lambda (\partial_{x} \widetilde{c}_h^{n}, \partial_{x} v_{h}^{n})_{\Omega_h^n} \nonumber \\
    &= -Q\delta \left(\dfrac{\alpha_{h}^n\,\Pi_h \widetilde{c}_h^{n}}{1 + \widehat{Q}_1 \left|\Pi_h c_h^{n-1}\right|}, \Pi_h v_{h}^{n} \right)_{\Omega_h^n}.
    \label{eqn:bform_ot}
\end{align}
\end{enumerate}
\end{discrete}

\edit{The Discrete scheme~\ref{defn:semi_disc_soln} provides a family of discrete spatial functions at each time index $n$, $0 \le n < N$, from which a time--space function can be reconstructed.}

\begin{definition}[Time--reconstruct]
For a family of functions $(f_h^n)_{\{ 0\leq n < N \}}$ on a set $X$, define the time--reconstruct $f_{h,\delta} : (0,T)\times X \rightarrow \mathbb{R}$ as $f_{h,\delta} := f_{h}^n$ on $\mathcal{T}_n$ for $0 \leq n < N$.
\label{defn:time-recon}
\end{definition}
\begin{definition}[Discrete solution]
\label{defn:disc_soln}
The 4-tuple $(\alpha_{h,\delta},u_{h,\delta},c_{h,\delta},\ell_{h,\delta})$, where $\alpha_{h,\delta}$, $u_{h,\delta}$, $c_{h,\delta}$, and $\ell_{h,\delta}$ are the respective time--reconstructs corresponding to the families $(\alpha_{h}^n)_n,\,(u_{h}^n)_n,\,(c_{h}^n)_n$, and $(\ell_h^n)_n$ obtained from~\ref{ds.a}--\ref{ds.d}, is called the discrete threshold solution.
\end{definition}

\subsection{Comments on the numerical method}
\label{sec:numcom}
This subsection substantiates the particular choices of numerical methods used to compute the discrete solution in Definition~\ref{defn:disc_soln}.  
\medskip\\
\noindent \textbf{Volume fraction equation}\medskip\\
The volume fraction equation~\eqref{eqn:vol_fraction} is a continuity equation with the source term $\check{\alpha} f(\check{\alpha},\check{c})$, and the conserved variable $\check{\alpha}$ (see Lemma~\ref{lemma:mass_con}) is transported with a velocity $\check{u}$. Finite volume methods are the natural choice of numerical methods that  preserve conservation property at the discrete level~\cite{leveque_2002}. An upwinding finite volume scheme is used in~\eqref{eqn:upwind}. Upwinding treats the boundary flux values differently depending on the direction (sign) of the velocity as in~\eqref{eqn:upwind_flux}, see~\cite[p.~159, Eq. (6.7)]{eymard}. If velocity at the node $x_j$ is positive (resp.\ negative), then the material towards that node is upwinded from the control volume $\mathcal{X}_{j-1}$ (resp.\ $\mathcal{X}_j$).  This means that the flux at the boundary $x_{j}$ between any two intervals $\mathcal{X}_{j-1}$ and $\mathcal{X}_j$ is approximated by: for any $t \in (0,T)$
\begin{align}
    (u_{c}\alpha)(t,\cdot)_{\vert{x_j}} \approx u_{h,\delta}(t,x_{j})^{+} \alpha_{h,\delta}(t,\cdot)_{\vert{\mathcal{X}_{j-1}}} - u_{h,\delta}(t,x_{j})^{-} \alpha_{h,\delta}(t,\cdot)_{\vert{\mathcal{X}_{j}}}.
    \label{eqn:upwind_flux}
\end{align}
Therefore, the spatial difference $(u_{c}\alpha)(t,\cdot)_{\vert{x_{j+1}}} - (u_{c}\alpha)(t,\cdot)_{\vert{x_{j}}}$ at $t = t_{n-1}$ is approximated as
\begin{align}
(u_{c}\alpha)(t,\cdot)_{\vert{x_{j+1}}} - (u_{c}\alpha)(t,\cdot)_{\vert{x_{j}}} \approx{}& \left( u_{j+1}^{(n-1)+}\ats{n-1}{j} - u_{j+1}^{(n-1)-} \ats{n-1}{j+1} \right) \\
&- \left( u_{j}^{(n-1)+} \ats{n-1}{j-1} - u_{j}^{(n-1)-} \ats{n-1}{j} \right),
\label{eqn:up_spatial}
\end{align}
which leads to~\eqref{eqn:upwind}.  The upwinding flux~\eqref{eqn:upwind_flux} is one of the simplest numerical fluxes that leads to a stable scheme. 

The upwind method~\eqref{eqn:upwind} introduces significant numerical diffusion in the discrete solution $\alpha_{h,\delta}$. Hence, if we locate the time-dependent boundary $\ell_{h}^n$ as $\min\{x_j: \alpha_{h}^n = 0 \text{ on } (x_j,{{\ell_m}}]\}$, then $\ell_{h,\delta}$ will have notable deviation from the exact solution, which will further tamper the quality of the solutions $u_{h,\delta}$ and $c_{h\,\delta}$. To eliminate this propagating error, the boundary point $\ell_{h}^n$ is located by $\min\{x_j: \alpha_{h}^n < \ath \text{ on } (x_j,{{\ell_m}}]\}$ (see Figure~\ref{fig:alpha_thrfig}). However, the residual volume fraction of $\ath$ on $[\ell_h^n,\ell_m]$ might cause the reaction term $\check{\alpha} f(\check{\alpha},\check{c})$ to contribute a spurious growth; the modification $(\check{\alpha} - \ath)^{+}f(\check{\alpha},\check{c})$ overcomes this problem. More importantly, $\ath$ acts as a lower bound on the value of $\alpha_{h,\delta}$ on $\mathcal{X}_{J_n - 1}$ (the right most control volume in $(0,\ell_h^n)$) at each time $t_n$. A detailed numerical study of the dependence of the discrete solution on $\ath$ and the optimal choice of $\ath$ that minimises the error incurred in $\ell_{h,\delta}$ is done in \cite{remesan_1}.
\medskip\\
\noindent \textbf{Velocity equation}\medskip\\
The velocity equation~\eqref{eqn:cel_velocity} is elliptic with Dirichlet boundary condition at $x = 0$ and Neumann boundary condition at $x = \ell_{h}^n$ for each $t_n$, and hence the Lagrange $\mathbb{P}^1$ finite element method is used to discretise~\eqref{eqn:cel_velocity}. A specific benefit of using conforming finite elements for approximating the velocity is that it naturally provides nodal values (degrees of freedom of the scheme) of $u_{h,\delta}$ at the boundaries of each $\mathcal{X}_j$; these nodal velocities can be directly used in the finite volume discretisation of~\eqref{eqn:upwind_flux} to compute fluxes at the control volume interfaces.
\medskip\\
\noindent \textbf{Oxygen tension equation}\medskip\\
The choice of time--implicit mass lumped finite element method~\cite[Section 7.3.5]{droniou2018gradient} for the oxygen tension equation~\eqref{eqn:oxygen_tension} is substantiated mainly by two reasons. Firstly, the choice of mass lumping as opposed to a standard Lagrange $\mathbb{P}^1$  finite element method is important to obtain a discrete maximum principle for $c_{h,\delta}$. Secondly, the backward time procedure ensures the $L^2(0,T;H^1(0,{{\ell_m}}))$ stability of the mass lumped solutions. This is essential to prove  Propositions~\ref{prop:phi_c_compact} and~\ref{prop:step_ce5} that lead to the compactness and convergence of the iterates.  \edit{Also, the mass lumping operator $\Pi_h$ used in~\ref{ds.d} preserves the $L^1$ norm of a piecewise linear function, and thus only locally redistributes the total amount of material whose concentration is specified by $c_{h,\delta}(t,\cdot)$ at each time $t \in (0,T)$. }

\section{Main theorems}
\label{sec:main_thm}
Define the function $\widehat{u}_{h,\delta}$ on $\mathscr{D}_T$ such that for every $t \in (0,T)$,
\begin{equation}
\widehat{u}_{h,\delta}(t,\cdot) = \left\{
\begin{array}{l l}
u_{h,\delta}(t,\cdot) & \text{ in }(0,\ell_{h,\delta}(t)], \\
u_{h,\delta}(t,\ell_{h,\delta}(t)) & \text{ in } (\ell_{h,\delta}(t),{{\ell_m}}).
\end{array}
\right.
\label{eqn:uhd_const}
\end{equation}
 \edit{The function $\widehat{u}_{h,\delta}$ is the constant extension of $u_{h,\delta}(t,\cdot)$  to $(\ell_{h,\delta}(t),\ell_m)$. Note that $\widehat{u}_{h,\delta}$ is continuous on the contrary to $u_{h,\delta}$ (see Figure~\ref{fig:uhhatuh}). This continuity is necessary to ensure the existence of a square integrable weak derivative.}

\begin{figure}[h!] 
	\centering
	\begin{subfigure}[b!]{6.5cm}
		\includegraphics[scale=0.8]{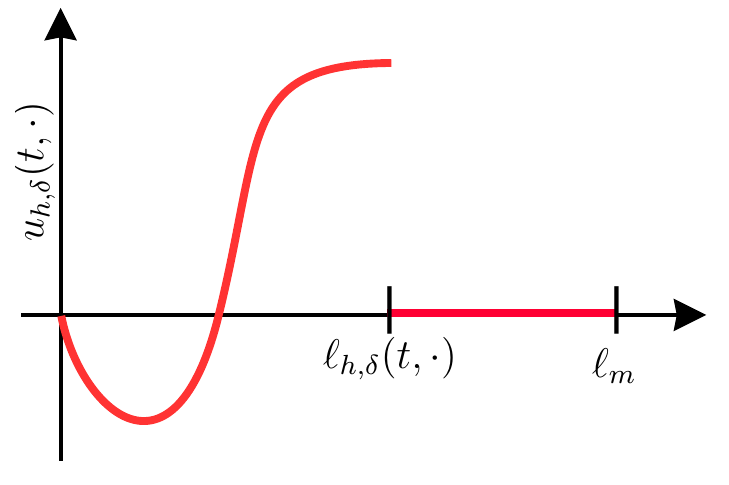}
		\caption{$u_{h,\delta}(t,\cdot)$.}
		\label{fig:uhdelta}
	\end{subfigure}
	\begin{subfigure}[b!]{6.5cm}
		\includegraphics[scale=0.8]{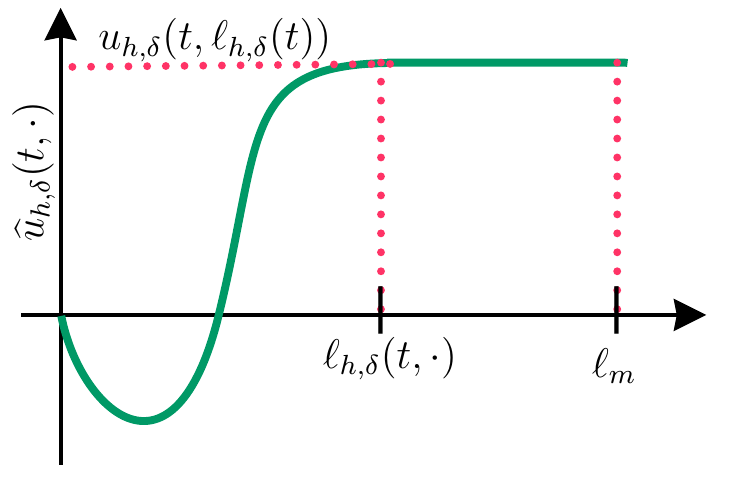}
		\caption{$\widehat{u}_{h,\delta}(t,\cdot)$.}
		\label{fig:hatuhdelta}
	\end{subfigure}
	\caption{The left--hand side plot illustrates the discontinuous function $u_{h,\delta}$ and the right--hand side plot illustrates the continuous modification $\widehat{u}_{h,\delta}$.}
	\label{fig:uhhatuh}
\end{figure}

\edit{The notation $\Pi_{h,\delta}c_{h,\delta}$ denotes the mass lumping operator $\Pi_h$ applied to $c_{h,\delta}(t,\cdot)$ for each $t \in (0,T)$.} Define the Hilbert spaces: 
\begin{align*}
L_{c}^2(0,T;H^1(0,{{\ell_m}})) :={}& \{f \in L^2(0,T;H^1(0,{{\ell_m}}))\,:\, f(t,\ell(t)) = 0\; \mbox{ for a.e. } t\in(0,T) \},\\
L_{u}^2(0,T;H^1(0,{{\ell_m}})) :={}& \{f \in L^2(0,T;H^1(0,{{\ell_m}}))\,:\, f(t,0) = 0\;\mbox{ for a.e. } t\in (0,T) \}.
\end{align*}
The main results of this article are stated in Theorem~\ref{thm:main_thm_1} and~\ref{thm:main_thm_2}.

\begin{theorem}[Compactness]
\label{thm:main_thm_1}
Let the properties stated below be true.
\begin{itemize}
    \item The initial volume fraction $\alpha_{0}$ belongs to $BV(0,\ell_m)$ and satisfies \eqref{eq:bd.alpha0}.
    \item The discretisation parameters $h$ and $\delta$ satisfy the following conditions:
    \begin{small}
		\begin{equation}      
		\label{eqn:cfl_condition}
    \begin{aligned}
      \rho\,\mathscr{C}_{\CFL} \leq \dfrac{\delta}{h} \le \mathscr{C}_{\CFL} := \dfrac{\sqrt{a_{\ast}}\mu}{2{{\ell_m}}}\dfrac{|1 - a^{\ast}|^2}{|a^{\ast} - \alast|} \ \text{ and }\ \delta < \min\left(\dfrac{1 - \rho}{s_2},\,\dfrac{2(1 - \rho)}{1 + s_2}\right),
    \end{aligned}
		\end{equation}
	\end{small}
    where $\rho,\,a_{\ast}$ and $a^{\ast}$ are constants chosen such that $\rho < 1$, $0 < a_{\ast} < m_{01}$, and  $m_{02} < a^{\ast}$. 
\end{itemize}
\edit{Then, there exists a finite time $T_{\ast}$ depending on the choice of $\rho,\,a_{\ast}$, and $a^{\ast}$, a subsequence (denoted by the same indices as of the sequence) of the family of functions $\{(\alpha_{h,\delta},\widehat{u}_{h,\delta},c_{h,\delta},\ell_{h,\delta})\}_{h,\delta}$,  and a 4-tuple of functions $(\alpha,\widehat{u},c,\ell)$}  such that, setting $\mathscr{D}_{T_{\ast}} = (0,T_{\ast}) \times (0,\ell_m)$, it holds 
\[
\alpha \in BV(\mathscr{D}_{T_{\ast}}),\;c \in L_{c}^2(0,T_{\ast};H^1(0,{{\ell_m}})),\,\widehat{u} \in L_u^2(0,T_{\ast};H^1(0,\ell_m)),\;\ell \in BV(0,T_{\ast}), 
\]
and as $h,\,\delta \rightarrow 0$,
\begin{itemize}
		\setlength\itemsep{-0.4em}
	\item $\alpha_{h,\delta} \rightarrow \alpha$ almost everywhere and in $L^\infty$-weak$^{\,\star}$ on $\mathscr{D}_{T_{\ast}}$, 
	\item $\Pi_{h,\delta} c_{h,\delta} \rightarrow c$ strongly in $L^2(\mathscr{D}_{T_{\ast}})$ and $\partial_x c_{h,\delta} \halfarrow \partial_x c $ weakly in $L^2(\mathscr{D}_{T_{\ast}})$,
	\item $\widehat{u}_{h,\delta} \halfarrow \widehat{u}$ and $\partial_x \widehat{u}_{h,\delta} \halfarrow \partial_x \widehat{u}$ weakly in $L^2(\mathscr{D}_{T_{\ast}})$, and
	\item $\ell_{h,\delta} \rightarrow \ell$ almost everywhere in $(0,T_{\ast})$.
\end{itemize}
\end{theorem}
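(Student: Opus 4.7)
My plan is to establish a family of uniform \emph{a priori} estimates on the discrete iterates $(\alpha_{h,\delta},u_{h,\delta},c_{h,\delta},\ell_{h,\delta})$ valid on a time interval $(0,T_\ast)$ of length depending only on $\rho,a_\ast,a^\ast$ and the data, and then to extract convergent subsequences using Helly's theorem, Banach--Alaoglu, and a discrete Aubin--Lions argument. The linchpin is a discrete maximum principle forcing $a_\ast \le \alpha_{h,\delta} \le a^\ast$ on $\mathscr{D}_{T_\ast}$: once this uniform separation of $\alpha_{h,\delta}$ from $0$ and $1$ is secured, the bilinear form $a_h^n$ defined in \eqref{eqn:dbform_vel} is uniformly coercive and continuous, which unlocks $H^1$ and $L^\infty$ bounds on $u_{h,\delta}$ via Lax--Milgram and the one--dimensional Sobolev embedding, BV bounds on $\alpha_{h,\delta}$ through the standard upwind finite volume analysis, and $L^2(H^1)$ bounds on $c_{h,\delta}$ derived from \eqref{eqn:bform_ot}.

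\textbf{Discrete maximum principle and selection of $T_\ast$ (main obstacle).} I would rewrite \eqref{eqn:upwind} so as to express $\alpha_j^n$ as a convex combination of $\alpha_{j-1}^{n-1}$, $\alpha_j^{n-1}$, and $\alpha_{j+1}^{n-1}$, plus a reaction contribution of order $\delta$; positivity of the convex weights amounts to the upper CFL bound $\delta/h \le \mathscr{C}_{\CFL}$, because whenever $\alpha^{n-1} \in [a_\ast,a^\ast]$ the coercivity and continuity constants of $a_h^{n-1}$, together with the explicit form of the load $\mathscr{H}$, give $\|u_h^{n-1}\|_\infty \le \mathscr{C}_{\CFL}^{-1}$, which is precisely why the algebraic expression of $\mathscr{C}_{\CFL}$ in \eqref{eqn:cfl_condition} involves $\mu,a_\ast,a^\ast,\alast$. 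Assuming inductively that $\alpha^{n-1} \in [a_\ast,a^\ast]$, the transport step keeps $\alpha^n$ in the same interval, and the reaction term perturbs $\alpha$ by at most a constant multiple of $\delta$. Choosing $T_\ast$ small enough that the accumulated reaction perturbation cannot push $\alpha$ from the initial window $[m_{01},m_{02}]$ past the boundary of $[a_\ast,a^\ast]$ over $(0,T_\ast)$ closes the induction; the conditions $\delta<(1-\rho)/s_2$ and $\delta<2(1-\rho)/(1+s_2)$ ensure that the implicit reaction solve is well--posed and contractive, while the lower CFL bound $\rho\mathscr{C}_{\CFL}\le \delta/h$ controls consistency terms that would otherwise feed into the growth. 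I expect this coupled bootstrap between the velocity estimate, the CFL constraint and the reaction step to be the most delicate part of the argument.

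\textbf{Derived bounds and compactness.} With $\alpha_{h,\delta}$ confined to $[a_\ast,a^\ast]$, Lax--Milgram applied to \eqref{eqn:dweak_vel} furnishes uniform $H^1(\Omega_h^n)$ bounds on $\widetilde u_h^n$; the one--dimensional Sobolev embedding promotes them to $L^\infty$ bounds on $u_{h,\delta}$ and $\partial_x u_{h,\delta}$, and the constant extension in \eqref{eqn:uhd_const} preserves $H^1$ regularity, so that Banach--Alaoglu yields the weak $L^2(\mathscr{D}_{T_\ast})$ convergence of $\widehat u_{h,\delta}$ and $\partial_x \widehat u_{h,\delta}$. A classical BV estimate for upwind schemes (see e.g.~\cite[p.~153]{eymard}), using the above $L^\infty$ bounds on $u_{h,\delta}$, gives a uniform bound of $\alpha_{h,\delta}$ in $BV(\mathscr{D}_{T_\ast})$; Helly's theorem extracts an almost everywhere convergent subsequence, and the $L^\infty$ bound upgrades this to $L^\infty$--weak-$\star$. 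For oxygen, testing \eqref{eqn:bform_ot} with $\widetilde c_h^n-1$ and telescoping in $n$, together with the positivity and boundedness from Lemma~\ref{lemma:c_pos}, yields a uniform $L^2(0,T_\ast;H^1(0,\ell_m))$ bound on $c_{h,\delta}-1$; estimating time translates of $\Pi_{h,\delta}c_{h,\delta}$ from \eqref{eqn:bform_ot} and invoking a discrete Aubin--Lions result then produce strong $L^2(\mathscr{D}_{T_\ast})$ convergence of $\Pi_{h,\delta}c_{h,\delta}$, with $\partial_x c_{h,\delta} \halfarrow \partial_x c$ automatic. Finally, step \ref{ds.b} together with the BV regularity of $\alpha_{h,\delta}$ supplies a uniform $BV(0,T_\ast)$ bound on $\ell_{h,\delta}$, so Helly's theorem closes the last convergence; lower semicontinuity of the relevant norms under weak convergence provides the claimed regularity of the limits.
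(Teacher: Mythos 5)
Your overall architecture matches the paper's proof (inductive discrete maximum principle for $\alpha_{h,\delta}$ coupled to the velocity bound and the CFL condition, then Helly/Banach--Alaoglu/discrete Aubin--Simon), but there is a genuine gap at the heart of the bootstrap. You claim that the one--dimensional Sobolev embedding promotes the $H^1$ bound on $\widetilde u_h^n$ to $L^\infty$ bounds on \emph{both} $u_{h,\delta}$ and $\partial_x u_{h,\delta}$. The embedding $H^1(0,\ell_m)\hookrightarrow L^\infty(0,\ell_m)$ controls the function, not its derivative; an $L^2$ bound on $\partial_x \widetilde u_h^n$ gives no supremum bound on $\partial_x \widetilde u_h^n$. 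Yet the term $-\delta\,\alpha_j^n\,\partial_x u^n_{h\vert\mathcal X_j}$ appears in the convex--combination rewriting \eqref{eqn:n1_sten_2} of the upwind step, so without an $L^\infty$ bound on $\alpha_{h,\delta}\partial_x u_{h,\delta}$ you cannot close the maximum principle that selects $T_\ast$, nor the temporal BV estimate for $\alpha_{h,\delta}$. The paper obtains this bound by a separate structural argument (Lemma~\ref{lemma:dvel_bv}): testing \eqref{eqn:dweak_vel} with the nodal basis functions yields a telescoping identity showing that $\mu\alpha_{h,\delta}\partial_x u_{h,\delta}-\mathscr H(\alpha_{h,\delta})$ has uniformly bounded variation in space; since this quantity vanishes at $x=\ell_m$, the BV bound upgrades to an $L^\infty$ bound. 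Your proposal is missing this step entirely, and it cannot be replaced by an embedding argument. Relatedly, your ``classical'' spatial BV estimate for the upwind scheme needs not only $\|u_{h,\delta}\|_\infty$ but also the spatial BV bound on $\alpha_{h,\delta}\partial_x u_{h,\delta}$ (and on $b,d$ via the $H^1$ bound on $c$), which feeds back into a Gronwall argument; this again rests on Lemma~\ref{lemma:dvel_bv}.

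Two further points are asserted without workable justification. First, the $BV(0,T_\ast)$ bound on $\ell_{h,\delta}$ does not follow from ``step \ref{ds.b} together with the BV regularity of $\alpha_{h,\delta}$'': the threshold level set of a BV function need not have a BV-in-time boundary. The paper instead observes that $\ell_h^{n+1}-\ell_h^n\le h\le(\rho\mathscr C_{\CFL})^{-1}\delta$ by the \emph{lower} CFL bound, so $\ell_{h,\delta}(t)-(\rho\mathscr C_{\CFL})^{-1}t$ is monotone decreasing and bounded, whence BV (Proposition~\ref{prop:lhd_bv}); this, incidentally, is the actual role of the lower CFL bound, not the control of ``consistency terms''. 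Second, ``estimating time translates of $\Pi_{h,\delta}c_{h,\delta}$ from \eqref{eqn:bform_ot}'' ignores that the computational domain $\Omega_h^n$ moves with $n$: the natural dual-norm estimate of the discrete time derivative is not available on the fixed interval $(0,\ell_m)$ because admissible test functions at step $n$ must vanish at $\ell_h^n$. The paper handles this with the cut-off $\varphi_{h,\epsilon}$ of \eqref{eqn:aux_ftn}, splitting $\Pi_{h,\delta}\widehat c_{h,\delta}$ into a part supported away from the moving boundary (to which discrete Aubin--Simon applies, using the BV bound on $\ell_{h,\delta}$ to control $D_{h,\delta}\varphi_{h,\epsilon}$) and a small boundary-strip remainder of $L^2$ norm $O(\sqrt{\epsilon})$. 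Without some such device your Aubin--Lions step does not go through as stated.
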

\begin{theorem}[Convergence]
Let $(\alpha,\widehat{u},c,\ell)$ be the limit of any subsequence of the numerical approximations $\{(\alpha_{h,\delta},\widehat{u}_{h,\delta},c_{h,\delta},\ell_{h,\delta})\}_{h,\delta}$, in the sense of Theorem~\ref{thm:main_thm_1}.
Define $\Omega(t) := (0,\ell(t))$ and the threshold domain $D_{T_\ast}^{\thr} :=  \{(t,x) : x < \ell(t), t \in (0,T_{\ast})\}$, and let $u := \widehat{u}$ on $D_{T_\ast}^{\thr}$ and $u := 0$ on $\mathscr{D}_{T_{\ast}}\setminus D_{T_\ast}^{\thr}$. Then, $(\alpha,u,c,\Omega)$ is a threshold solution in the sense of Definition~\ref{defn:ext_soln} with $T = T_{\ast}$.
	\label{thm:main_thm_2}
\end{theorem}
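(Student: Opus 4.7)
\textbf{Proof plan for Theorem~\ref{thm:main_thm_2}.} The plan is to verify conditions \ref{ts.1}--\ref{ts.4} of Definition~\ref{defn:ext_soln} by passing to the limit in the discrete equations~\eqref{eqn:upwind}, \eqref{eqn:dweak_vel} and \eqref{eqn:bform_ot}, using the compactness and convergence properties supplied by Theorem~\ref{thm:main_thm_1}. First, for \ref{ts.1}, I would multiply \eqref{eqn:upwind} by $\varphi(t_n, x_j)$ for a test function $\varphi \in \mathscr{C}_c^\infty([0,T_{\ast}) \times (0,\ell_m))$, sum by parts in both space and time, and reorganise the upwind flux terms so that they become a discrete analogue of $\int \alpha u \,\partial_x \varphi$. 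The $L^\infty$-weak$^\star$ convergence of $\alpha_{h,\delta}$, combined with the almost-everywhere convergence and the weak convergence of $\widehat{u}_{h,\delta}$ in $L^2$, lets me pass to the limit in the product $\alpha_{h,\delta}\widehat{u}_{h,\delta}$ (using the usual ``a.e.\ $\times$ weak $\to$ weak'' argument) and in the reaction term $(\alpha_{h,\delta} - \ath)^+ f(\alpha_{h,\delta}, c_{h,\delta})$ via dominated convergence, since $c_{h,\delta}$ converges strongly after applying $\Pi_{h,\delta}$ and $\alpha_{h,\delta}$ converges a.e. The initial-value term comes from Abel summation on the time discretisation plus consistency of the projection of $\alpha_0$. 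The pointwise bounds $m_{11}\leq \alpha \leq m_{12}$ on $\Omega(t)$ follow from the corresponding discrete bounds established in the compactness step and a.e.\ convergence.

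For \ref{ts.2}, I would use the definition $\ell_h^n = \min\{x_j : \alpha_j^n < \ath \text{ on } (x_j,\ell_m)\}$ and pass to the limit: since $\alpha_{h,\delta} \to \alpha$ a.e.\ and $\ell_{h,\delta} \to \ell$ a.e., the inequality $\alpha_{h,\delta} \leq \ath$ on $\mathscr{D}_{T_\ast} \setminus \{x < \ell_{h,\delta}(t)\}$ transfers to $\alpha \leq \ath$ on $\mathscr{D}_{T_\ast} \setminus D_{T_\ast}^\thr$ by Fatou-type arguments. For \ref{ts.3}, the key observation is that the discrete test space $\mathcal{S}_{0,h}^n$ lives on the moving grid-aligned domain $\Omega_h^n$. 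Given a test function $v \in \Hdx{u}(D_{T_\ast}^\thr)$, I extend it by $0$ outside $D_{T_\ast}^\thr$ and approximate it by suitable interpolants $v_h^n$ which vanish for $x \geq \ell_h^n$, then substitute into \eqref{eqn:dweak_vel}. Passing to the limit uses the weak convergence of $\widehat{u}_{h,\delta}$ and $\partial_x \widehat{u}_{h,\delta}$ together with the strong (a.e.) convergence of the coefficient $\alpha_{h,\delta}/(1-\alpha_{h,\delta})$ and of $\mathscr{H}(\alpha_{h,\delta})$; the support condition on $\widehat{u}$ (equal to a constant outside $D_{T_\ast}^\thr$) combined with the Dirichlet condition at $0$ identifies $u = \widehat{u}\,\chi_{D_{T_\ast}^\thr}$ as the correct object to test against.

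For \ref{ts.4}, I would take a smooth test function $v \in H_{\partial x}^{1,c}(D_{T_\ast}^\thr)$ with $\partial_t v \in L^2$, interpolate it at the nodes of the moving mesh so that it vanishes at $x=\ell_h^n$, insert it into \eqref{eqn:bform_ot}, and perform a discrete integration by parts in time. The strong convergence of $\Pi_{h,\delta} c_{h,\delta}$ in $L^2$ handles the time-derivative term and the reaction term (using continuity of $s \mapsto s/(1+\widehat{Q}_1|s|)$ and dominated convergence), while the weak convergence of $\partial_x c_{h,\delta}$ handles the diffusion term. The initial term $(c_0, v(0,\cdot))$ is recovered from consistency of the interpolation, and the boundary condition $c=1$ on $\partial D_{T_\ast}^\thr$ is preserved because $c_h^n(\ell_h^n)=1$ by construction and $c=1$ outside $D_{T_\ast}^\thr$ by the extension in \eqref{eqn:chn}.

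The main obstacle, in my view, will be the interaction between the moving domain $\Omega_h^n$ (which only converges a.e.) and the construction of discrete test functions that both (i) belong to the correct discrete spaces $\mathcal{S}_{0,h}^n$ or $\mathcal{S}_{h,0}^n$ at each time step and (ii) converge strongly enough to a continuous test function supported in $D_{T_\ast}^\thr$. In particular, the fact that $\widehat{u}_{h,\delta}$ is defined as the constant extension $u_{h,\delta}(t, \ell_{h,\delta}(t))$ outside $\Omega_h^n$ means that weak convergence of $\widehat{u}_{h,\delta}$ does not directly identify the limit inside $D_{T_\ast}^\thr$ without carefully exploiting the a.e.\ convergence of $\ell_{h,\delta}$; verifying that the trace at the moving boundary is consistently transferred, and that the extension-by-zero does not pick up spurious boundary contributions when integrating by parts in the velocity equation, is the delicate point. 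Once this compatibility between moving mesh, extension, and test functions is settled, the remaining passages to the limit are routine applications of the convergences listed in Theorem~\ref{thm:main_thm_1}.
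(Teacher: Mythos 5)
Your overall strategy (pass to the limit in each of the three discrete equations, using the convergences of Theorem~\ref{thm:main_thm_1}) is the paper's strategy, and your treatment of \ref{ts.1} and \ref{ts.2} matches the paper's Propositions on those steps. However, your handling of the test functions on the moving domain contains a genuine error and a genuine gap. For \ref{ts.3} you propose to extend $v\in \Hdx{u}(D_{T_\ast}^{\thr})$ by zero and use interpolants that vanish for $x\ge \ell_h^n$. This is wrong on two counts: the discrete velocity test space $\mathcal{S}_{0,h}^n$ only imposes $v_h^n(0)=0$, not vanishing at $\ell_h^n$, and the weak formulation \eqref{eqn:weak_vel} must hold for all $v\in\Hdx{u}(D_{T_\ast}^{\thr})$, a space in which functions vanishing at $x=\ell(t)$ are \emph{not} dense — so your argument would lose the natural (Neumann/stress) boundary condition at the moving boundary, which is encoded in the term $(\mathscr{H}(\alpha),\partial_x v)_{\Omega(t)}$. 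Moreover, for $v$ with nonzero trace at $\ell(t)$ the zero extension is discontinuous, and any interpolant of it forced to vanish beyond $\ell_h^n$ has a gradient of order $1/h$ near the boundary, so the terms $(\mu\alpha_h^n\partial_x \widetilde u_h^n,\partial_x v_h^n)$ and $(\mathscr{H}(\alpha_h^n),\partial_x v_h^n)$ do not pass to the correct limit. The paper instead extends $v$ \emph{smoothly} to $\mathscr{D}_{T_\ast}$, takes its interpolant (which is admissible in $\mathcal{S}_{0,h}^n$ without modification), multiplies by $\pmb{\chi}_{A_{h,\delta}}$, and uses the a.e.\ convergence $\pmb{\chi}_{A_{h,\delta}}\to\pmb{\chi}_{D_{T_\ast}^{\thr}}$ (Proposition~\ref{prop:cv_lh_ind}) to localise the limit to $D_{T_\ast}^{\thr}$.

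For \ref{ts.4} the situation is reversed: there the discrete space $\mathcal{S}_{h,0}^n$ genuinely requires $v_h^n(\ell_h^n)=0$, and you correctly ask for interpolants vanishing at $\ell_h^n$, but you give no mechanism to produce them. Since $\ell_{h,\delta}\to\ell$ only almost everywhere and $\ell_h^n\ne\ell(t_n)$ in general, the nodal interpolant of a function vanishing on $\{x=\ell(t)\}$ need not vanish at $x_{J_n}$. You flag this as ``the main obstacle'' but do not resolve it; resolving it is precisely the content of the paper's covering argument (Lemma~\ref{lemma:covering} and Remark~\ref{rem:c_smooth_app}): one reduces to test functions compactly supported in right--leaning parallelograms $P\subset A^-$ with slope $(\rho\mathscr{C}_{\CFL})^{-1}$, and uses the decomposition $\ell_{h,\delta}=\ell_{h,\delta,BV}+\ell_{h,\delta,D}$ from Proposition~\ref{prop:lhd_bv} (with $\ell_{h,\delta,D}$ decreasing, a consequence of the CFL bound on how fast $\ell_h^n$ can grow) to show $P\subset A_{h,\delta}^{-}$ for all small $h,\delta$, so that the interpolant is eventually a legitimate element of $\mathcal{S}_{h,0}^n$. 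Without this (or an equivalent device), the passage to the limit in \eqref{eqn:bform_ot} cannot be carried out for a sufficiently rich class of test functions, so the proof as proposed is incomplete.
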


\begin{remark}[Convergence up to a subsequence]
	In the rest of the article, unless otherwise specified, ``convergence'' of sequences is to be understood up to a subsequence. Hence ``a sequence $(a_n)_n$ converges to a limit $a$'' means that there exists a subsequence $(a_{k_n})_n \subseteq (a_n)_n$ such that $(a_{k_n})_n$ converges to $a$. This concept is classical when analysing the convergence of numerical approximations of non--linear equations, see, e.g., \cite[Section 4.5]{tadmor}, \cite[Section 5.2]{Di-Pietro.Ern:12} or \cite[Chap. 5, 6]{droniou2018gradient}.
\end{remark}

\begin{remark}[Existence of a solution]
	Existence of a threshold solution is ensured by Theorems \ref{thm:main_thm_1} and \ref{thm:main_thm_2}. Theorem \ref{thm:main_thm_2} also shows that if convergence is observed in a numerical simulation, then the limit is necessarily a solution to the threshold model. Finally, as usual in convergence by compactness arguments, if the solution to this model is proved to be unique then the entire sequence of approximations (not just a subsequence) converges to that solution.
\end{remark}

\section{Proof of Theorem~\ref{thm:main_thm_1}}
\label{sec:proof_main_1}
The proof of Theorem~\ref{thm:main_thm_1} involves several steps, which are described here. In Subsection~\ref{sec:Well_posedness}, we prove the following:
\begin{itemize}
		\setlength\itemsep{-0.4em}
	\item[--] existence and uniqueness of the discrete solutions $\alpha_{h,\delta}$, $u_{h,\delta}$, and $c_{h,\delta}$,
	\item[--] boundedness of $u_{h,\delta}$ in various norms,
	\item[--] positivity, boundedness, and bounded variation property of $\alpha_{h,\delta}$, and
	\item[--] positivity and boundedness of $c_{h,\delta}$.
\end{itemize}
 In Subsection~\ref{sec:compactness_est}, we show that the families of functions $\{\alpha_{h,\delta}\}_{h,\delta},\,\{u_{h,\delta}\}_{h,\delta},$ $\{c_{h,\delta}\}_{h,\delta}$, and $\{\ell_{h,\delta}\}_{h,\delta}$ are relatively compact in appropriate spaces.

\subsection{Existence and uniqueness of the iterates}
\label{sec:Well_posedness}
The proof of existence and uniqueness of the discrete solutions $\alpha_{h,\delta}$, $u_{h,\delta}$, and $c_{h,\delta}$ involves many interrelated results. For clarity, we provide a sketch of the steps involved. 

Fix two constants $a^\ast\in (\max(\alast,m_{02}),1)$ and $a_\ast\in  (0,\min(m_{01},\ath))$. We establish the existence of a time $T_*$ (explicitly determined in the analysis), which depends in particular on $a_\ast$ and $a^\ast$, such that the following theorem holds.

\begin{theorem}
For all $n\in \mathbb{N}$ such that $t_n\le T_*$, $\alpha_{h,\delta}(t_n,\cdot)$ and $c_{h,\delta}(t_n,\cdot)$ are well defined. Also, it holds $a_{\ast} < \alpha_{h,\delta}(t_n,\cdot)_{\vert\Omega_{h}^n} < a^{\ast}$ and $0 \leq c_{h,\delta}(t_n,\cdot)_{\vert(0,\ell_m)} \leq 1$. 
\label{thm:induction}
\end{theorem}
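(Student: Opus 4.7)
I would prove Theorem~\ref{thm:induction} by induction on $n$, carrying along three items at each step: existence and uniqueness of $\alpha_h^n$, $u_h^n$, $c_h^n$; the strict two--sided inequality $a_\ast < \alpha_h^n < a^\ast$ on $\Omega_h^n$; and $0 \le c_h^n \le 1$ on $(0,\ell_m)$. The base case $n=0$ is essentially by construction: the piecewise--constant projection defining $\alpha_h^0$ inherits $m_{01} \le \alpha_h^0 \le m_{02}$ from \eqref{eq:bd.alpha0}, so $a_\ast < \alpha_h^0 < a^\ast$ by the choice of $a_\ast, a^\ast$; the Lagrange interpolant $c_h^0$ inherits $0 \le c_h^0 \le 1$; and $u_h^0$ is obtained by applying Lax--Milgram to \eqref{eqn:dweak_vel}, whose bilinear form $a_h^0$ is coercive on $\mathcal{S}_{0,h}^0$ because $\alpha_h^0 \ge a_\ast > 0$ (controlling the $\mu \alpha_h^0 \partial_x u \partial_x v$ and $k\alpha_h^0/(1-\alpha_h^0) u v$ terms from below), while $\alpha_h^0 \le m_{02} < a^\ast < 1$ keeps $\mathscr{H}(\alpha_h^0)$ and hence $\mathcal{L}_h^0$ bounded.

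\textbf{Inductive step.} Assume the conclusion at index $n-1$. Equation \eqref{eqn:upwind} rewrites as $\alpha_j^n + \delta(\alpha_j^n - \ath)^+ d_j^{n-1} = R_j^{n-1}$, where $R_j^{n-1}$ depends only on data at step $n-1$; the left--hand side is continuous and strictly increasing in $\alpha_j^n$, so $\alpha_j^n$ is uniquely defined. I would then rewrite $R_j^{n-1}$ as a convex combination of $\alpha_{j-1}^{n-1}, \alpha_j^{n-1}, \alpha_{j+1}^{n-1}$ plus the nonnegative growth contribution $\delta(\alpha_j^{n-1} - \ath)^+(1-\alpha_j^{n-1}) b_j^{n-1}$. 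The nonnegativity of the three convex coefficients comes from the CFL bound $\delta/h \le \mathscr{C}_{\CFL}$ of \eqref{eqn:cfl_condition}, combined with an a priori $L^\infty$ estimate on $u_h^{n-1}$ obtained from \eqref{eqn:dweak_vel} using $a_\ast < \alpha_h^{n-1} < a^\ast$; the constant $\mathscr{C}_{\CFL}$ has been calibrated exactly for this. For the \emph{lower} bound: if $\alpha_j^n \le \ath$ then the left--side source vanishes and $\alpha_j^n = R_j^{n-1} > a_\ast$ as a convex combination of values $> a_\ast$ plus a nonnegative term; otherwise $\alpha_j^n > \ath > a_\ast$ since $a_\ast < \min(m_{01},\ath)$. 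For the \emph{upper} bound: the convex combination part of $R_j^{n-1}$ is at most $\max_i \alpha_i^{n-1} < a^\ast$, while the growth term contributes at most $\delta K$ per step, where $K = K(a_\ast, a^\ast, \alast, s_1, s_2, s_3, s_4)$ is a fixed constant; the implicit decay term $\delta(\alpha_j^n - \ath)^+ d_j^{n-1}$ on the left only helps. Iterating yields $\alpha_j^n \le m_{02} + t_n K$, so it suffices to choose $T_\ast$ satisfying $T_\ast K < a^\ast - m_{02}$, with the second condition in \eqref{eqn:cfl_condition} absorbing the implicit--decay discrepancy at each step.

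\textbf{Remaining substeps and main obstacle.} Once $\alpha_h^n$ is controlled, $\ell_h^n$ and $\Omega_h^n$ follow immediately from \ref{ds.b}. The velocity $u_h^n$ is then obtained by Lax--Milgram on \eqref{eqn:dweak_vel}: coercivity of $a_h^n$ on $\mathcal{S}_{0,h}^n$ uses $\alpha_h^n \ge a_\ast > 0$ together with a Poincar\'e inequality (valid since $v_h^n(0)=0$), and boundedness of $\mathcal{L}_h^n$ uses $\alpha_h^n \le a^\ast < 1$ in $\mathscr{H}(\alpha_h^n)$. Finally, \eqref{eqn:bform_ot} is a linear system on $\mathcal{S}_{h,0}^n$ whose matrix is symmetric positive definite (positivity of $\alpha_h^n$ keeps the reaction contribution nonnegative), giving existence and uniqueness of $\widetilde c_h^n$; the bounds $0\le c_h^n\le 1$ are the content of Lemma~\ref{lemma:c_pos}, proved by testing against $\Pi_h(\widetilde c_h^n - 1)^+$ and $\Pi_h(\widetilde c_h^n)^-$ and exploiting the M--matrix property produced by mass lumping. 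The clear bottleneck is the preservation of the strict bounds on $\alpha_h^n$: the lower bound works only because the threshold structure $a_\ast < \ath$ turns the scheme into pure transport below $\ath$, while the upper bound is intrinsically time--limited by the nonlinear growth of $f$, which is exactly why a finite $T_\ast$, depending on the gap $a^\ast - m_{02}$ and on $\mathscr{C}_{\CFL}$, is unavoidable.
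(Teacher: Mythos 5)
Your overall architecture (strong induction carrying well-posedness of $u_h^n$, the two-sided bound on $\alpha_h^n$, and the maximum principle for $c_h^n$, with Lax--Milgram for the velocity and the convex-combination structure of the upwind update for $\alpha$) matches the paper's. But the heart of the inductive step --- preservation of $a_\ast<\alpha_h^n<a^\ast$ --- has a genuine gap. The right-hand side of the rewritten update is \emph{not} a convex combination of $\alpha_{j-1}^{n-1},\alpha_j^{n-1},\alpha_{j+1}^{n-1}$ plus the nonnegative reaction term: there is an additional divergence term $-\tfrac{\delta}{h}\,\alpha_j^{n-1}\bigl(u_{j+1}^{n-1}-u_j^{n-1}\bigr)=-\delta\,\alpha_j^{n-1}\,\partial_x u^{n-1}_{h\vert\mathcal{X}_j}$, which has no sign, and your decomposition silently drops it (your constant $K$ for the upper bound depends only on $a_\ast,a^\ast,\alast$ and the $s_i$, confirming the omission). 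Controlling this term is exactly the content of step \ref{is.2} of the paper: a discrete BV estimate on $\mu\alpha_h^n\partial_x u_h^n-\mathscr{H}(\alpha_h^n)$, obtained by testing \eqref{eqn:dweak_vel} with the nodal basis functions, which yields $L^\infty$ bounds on $\mu\alpha_h^n\partial_x u_h^n$. Once reinstated, each step costs $\delta\fmin$ on the lower bound and $\delta\fmax$ on the upper bound, so \emph{both} bounds are only preserved up to a finite time and $T_\ast=\min(T_m,T_M,T_\ell)$; your claim that the lower bound holds unconditionally (only the growth term forcing a finite $T_\ast$) is therefore incorrect, even though your dichotomy "$\alpha_j^n>\ath$ or the implicit death term vanishes" is a nice way to dispose of the $d_j^{n-1}$ contribution.

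A second, related gap sits at the moving boundary. The induction hypothesis gives $\alpha^{n-1}>a_\ast$ only on $\Omega_h^{n-1}$; the neighbour $\alpha_{J_{n-1}}^{n-1}$ entering the convex combination for the outermost cells of $\Omega_h^{n}$ is merely $<\ath$ and may lie below $a_\ast$ (it can be zero), so "a convex combination of values $>a_\ast$" is not available there. The paper resolves this with the case split of Proposition~\ref{prop:dalpha_max}: either the last cell satisfies $\alpha\ge\ath>a_\ast$ directly from the domain-selection rule \ref{ds.b}, or, when the tumour expands, necessarily $u_{J_n}^n>0$ so the coefficient $\tfrac{\delta}{h}u_{J_n}^{n\,-}$ multiplying the outside value vanishes from the combination. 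Two minor corrections: the bounds $0\le c_h^n\le 1$ at the discrete level are Lemma~\ref{lemma:dot_max} (proved via the positive-inverse/M-matrix property of the lumped system, Lemma~\ref{appen_id.g}), not Lemma~\ref{lemma:c_pos}, which is the continuous statement; and the constraint $T_\ast\le T_\ell$, ensuring $\Omega_h^n$ never reaches $\ell_m$, is also part of the definition of $T_\ast$ and is absent from your proposal.
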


	The proof of Theorem~\ref{thm:induction} is done in several steps by strong induction on $n\in \mathbb{N}$. The base case obviously holds, for any choice of $a_\ast$ and $a^\ast$ as above.
	Let $n\in\mathbb{N}$ be such that $t_{n+1} \le T_*$, and assume that Theorem~\ref{thm:induction} holds for the indices $0,\ldots,n$. The inductive steps \ref{is.1}--\ref{is.4} below show that the same holds for the index $n+1$.
	
	In the sequel, $\mathscr{C}$ is a generic constant that depends on $T,\,{{\ell_m}},\,\ell,\,\alast,\,a_{\ast},a^{\ast}$ and the model parameters, as explicitly defined in~\eqref{eqn:vel_dbv}--\eqref{eqn:up_bound}.
		
\begin{enumerate}[label= $\mathrm{(IS.\arabic*)}$,ref=$\mathrm{(IS.\arabic*)}$,leftmargin=\widthof{(IS.4)}+1.2\labelsep]
    \item\label{is.1} We establish that there exists a unique solution~$\widetilde{u}_{h}^{n}$ for the variational problem~\eqref{eqn:dweak_vel} and derive energy estimates. 
    \item\label{is.2} \textbf{Bounded variation and $L^\infty$ estimates on $\alpha_{h,\delta} u_{h,\delta}$:} We show that
\begin{itemize}
    \item[(a)]  $||\mu \alpha_{h,\delta}(t_n,\cdot) \partial_x u_{h,\delta}(t_n,\cdot) - \mathscr{H}(\alpha_{h,\delta}(t_n,\cdot)) ||_{BV(0,{{\ell_m}})} \le \mathscr{C}$, 
    \item[(b)] $||( \mu \alpha_{h,\delta}(t_n,\cdot) \partial_x u_{h,\delta}(t_n,\cdot))^{-}||_{L^\infty(0,{{\ell_m}})} \le \mathscr{C}$, \text{ and }
    \item[(c)] $||\mu \alpha_{h,\delta}(t_n,\cdot) \partial_{x} u_{h,\delta}(t_n,\cdot)||_{L^\infty(0,{{\ell_m}})} \le \mathscr{C}$,
\end{itemize}
where $\mathscr{H}(\alpha) = \alpha(\alpha - \alast)^{+}/(1 - \alpha)^2$. 
\item\label{is.3} \textbf{$L^\infty$ estimates on $\alpha_{h,\delta}$: } It holds $a_{\ast} < \alpha_{h,\delta}(t_{n+1},\cdot)\vert_{\Omega_{h}^{n+1}} < a^{\ast}$.
\item\label{is.4} We show that there exists a unique solution $\widetilde{c}_{h,\delta}(t_{n+1},\cdot)$ to~\eqref{eqn:bform_ot} and that  $0 \leq \widetilde{c}_{h,\delta}(t_{n+1},\cdot)_{\vert(0,\ell_m)} \leq 1$.
\end{enumerate}
The steps (IS.1)--(IS.4) are now performed in Lemmas~\ref{lemma:dvel_exis},~\ref{lemma:dvel_bv},~\ref{lemma:dot_max} and Proposition~\ref{prop:dalpha_max}, respectively. The time $T_*$ is explicitly obtained in the proof of Proposition \ref{prop:dalpha_max}.

\begin{lemma}[Step \ref{is.1}]
\label{lemma:dvel_exis}
 There exists a unique solution $\widetilde{u}_h^n$ to~\eqref{eqn:dweak_vel} and it satisfies the following estimates:
	\begin{multline}
\left|\left| \sqrt{\alpha_{h,\delta}(t_n,\cdot)}\partial_{x}\widetilde{u}_h^n \right|\right|_{0,\Omega_{h}^n} + \left|\left| \dfrac{\sqrt{\alpha_{h,\delta}(t_n,\cdot)} \widetilde{u}_h^n}{\sqrt{1 - \alpha_{h,\delta}(t_n,\cdot)}} \right|\right|_{0,\Omega_{h}^n} \le \left(1 + \frac{1}{\sqrt{k}}\right) \sqrt{\dfrac{{{\ell_m}}}{\mu}}  \dfrac{|a^{\ast} - \alast|}{|1 - a^{\ast}|^2}.
\label{eqn:vel_d_ene_1}
	\end{multline}
\end{lemma}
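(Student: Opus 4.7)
The plan is to carry out the step in two parts: well--posedness via Lax--Milgram on the finite--dimensional subspace $\mathcal{S}_{0,h}^n$, and the energy estimate by testing the variational equation with $\widetilde u_h^n$ itself.

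By the strong induction hypothesis of Theorem~\ref{thm:induction} we have $a_{\ast}<\alpha_{h,\delta}(t_n,\cdot)<a^{\ast}$ on $\Omega_h^n$, which yields pointwise bounds
\[
\frac{a_{\ast}}{1-a_{\ast}}\le \frac{\alpha_h^n}{1-\alpha_h^n}\le \frac{a^{\ast}}{1-a^{\ast}},\qquad a_{\ast}\le \alpha_h^n\le a^{\ast}\quad\text{on }\Omega_h^n.
\]
Combined with Poincar\'e's inequality on $\Omega_h^n$ (available because members of $\mathcal{S}_{0,h}^n$ vanish at $x=0$), these give coercivity and continuity of $a_h^n$ with respect to the $H^1(\Omega_h^n)$ norm. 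Continuity of $\mathcal{L}_h^n$ follows from the uniform bound $0\le \mathscr{H}(\alpha_h^n)\le (a^{\ast}-\alast)/(1-a^{\ast})^2$ on $\Omega_h^n$ via Cauchy--Schwarz. Lax--Milgram therefore produces a unique $\widetilde u_h^n\in \mathcal{S}_{0,h}^n$ solving~\eqref{eqn:dweak_vel}.

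To prove the energy estimate, I test~\eqref{eqn:dweak_vel} with $v_h^n=\widetilde u_h^n$, which gives, with $A:=\|\sqrt{\alpha_{h,\delta}(t_n,\cdot)}\,\partial_x \widetilde u_h^n\|_{0,\Omega_h^n}$ and $B:=\|\sqrt{\alpha_{h,\delta}(t_n,\cdot)/(1-\alpha_{h,\delta}(t_n,\cdot))}\,\widetilde u_h^n\|_{0,\Omega_h^n}$, the identity
\[
kB^2+\mu A^2=\bigl(\mathscr{H}(\alpha_h^n),\,\partial_x \widetilde u_h^n\bigr)_{\Omega_h^n}.
\]
For the right--hand side I split $\mathscr{H}(\alpha_h^n)=\bigl(\mathscr{H}(\alpha_h^n)/\sqrt{\mu\alpha_h^n}\bigr)\cdot\sqrt{\mu\alpha_h^n}$ and apply Cauchy--Schwarz, which produces the factor $\sqrt{\mu}\,A$. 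The key observation is that
\[
\frac{\mathscr{H}(\alpha_h^n)}{\sqrt{\alpha_h^n}}=\sqrt{\alpha_h^n}\,\frac{(\alpha_h^n-\alast)^{+}}{(1-\alpha_h^n)^2}\le \frac{a^{\ast}-\alast}{(1-a^{\ast})^2}\quad\text{on }\Omega_h^n,
\]
using only the upper bound $\sqrt{\alpha_h^n}\le 1$. This gives $\mu A^2+kB^2\le \sqrt{\ell_m}\,(a^{\ast}-\alast)/(1-a^{\ast})^2 \cdot A$, from which $A$ is controlled first (by dropping the $kB^2$ term) and $B$ next (by reinserting the resulting bound on $A$). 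Summing the two bounds yields the stated inequality.

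The main obstacle is the constant bookkeeping: the $\sqrt{\alpha}$ weights appearing on the left--hand side are precisely what allow the $\sqrt{\alpha}$ factor present in $\mathscr{H}$ to be absorbed without ever invoking the lower threshold $a_{\ast}$. Consequently the estimate depends only on $k$, $\mu$, $\ell_m$, $\alast$ and $a^{\ast}$ --- a robustness that will be essential later, since the induction carries $a^{\ast}$ (derived from $m_{02}$) through cleanly while the lower bound $a_{\ast}$ plays a role only in establishing coercivity of the discrete problem.
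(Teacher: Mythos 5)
Your proposal is correct and follows essentially the same route as the paper: Lax--Milgram on $\mathcal{S}_{0,h}^n$ using the bounds $a_{\ast}<\alpha_h^n<a^{\ast}$, then testing \eqref{eqn:dweak_vel} with $\widetilde{u}_h^n$ and applying Cauchy--Schwarz with the $\sqrt{\alpha_h^n}$ weight so that $\mathscr{H}(\alpha_h^n)/\sqrt{\alpha_h^n}\le (a^{\ast}-\alast)/(1-a^{\ast})^2$ and $|\Omega_h^n|\le \ell_m$ give exactly the paper's intermediate inequality $\mu A^2+kB^2\le \sqrt{\ell_m}\,|a^{\ast}-\alast|\,|1-a^{\ast}|^{-2}A$. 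The subsequent extraction of the bounds on $A$ and $B$ matches the paper's (terse) conclusion, so no further comment is needed.
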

\begin{proof}
Coercivity and continuity of the bilinear form $a_h^n$ and continuity of the linear form $\mathcal{L}_h^n$ are clear from $0 < a_{\ast} \leq \alpha_{h,\delta}(t_n,\cdot) \leq a^{\ast} < 1$. An application of the Lax--Milgram lemma~\cite[p.~297]{EvansPDE} establishes the existence of a unique discrete solution to~\eqref{eqn:dweak_vel}. A choice of $v_{h}^n = \widetilde{u}_h^n$ in~\eqref{eqn:dweak_vel}, the fact that $0 < \alpha_{h,\delta}(t_n,\cdot) < 1$, and Cauchy--Schwarz inequality in~\eqref{eqn:dlform_vel} yield 
\begin{multline*}
    \mu \left|\left| \sqrt{\alpha_{h,\delta}(t_n,\cdot)}\partial_{x}\widetilde{u}_h^n \right|\right|_{0,\Omega_{h}^n}^2 + k\left|\left| \dfrac{\sqrt{\alpha_{h,\delta}(t_n,\cdot)} \widetilde{u}_h^n}{\sqrt{1 - \alpha_{h,\delta}(t_n,\cdot)}} \right|\right|_{0,\Omega_{h}^n}^2 \\
    \leq \sqrt{{{\ell_m}}} \dfrac{|a^{\ast} - \alast|}{|1 - a^{\ast}|^2} \left|\left| \sqrt{\alpha_{h,\delta}(t_n,\cdot)}\partial_{x}\widetilde{u}_h^n \right|\right|_{0,\Omega_{h}^n},
\end{multline*}
which proves~\eqref{eqn:vel_d_ene_1}. 
\end{proof}

\begin{remark}[$L^\infty$ estimate on velocity]
	Since $\alpha_{h,\delta}(t_n,\cdot)\ge a_\ast$, the estimate  \eqref{eqn:vel_d_ene_1} yields an upper bound on $||\partial_x \widetilde{u}_h^n||_{0,\Omega^n_h}$, which after an application of the boundary condition $\widetilde{u}_h^n(0) = 0$  and a use of Cauchy--Schwarz inequality yields
	\begin{equation}
	||u_{h,\delta}(t_n,\cdot)||_{L^\infty(0,{{\ell_m}})} \leq \dfrac{{{\ell_m}}}{\sqrt{a_{\ast}}\mu} \dfrac{|a^{\ast} - \alast|}{|1 - a^{\ast}|^2}.
	\label{eqn:u_bound}
	\end{equation}
\end{remark}

\begin{lemma}[Step \ref{is.2}]
	\label{lemma:dvel_bv}
	It holds that
	\begin{subequations}
	\begin{align}
	&\hspace{-0.5cm}|| \mu \alpha_{h,\delta}(t_n,\cdot) \partial_x u_{h,\delta}(t_n,\cdot) - \mathscr{H}(\alpha_{h,\delta}(t_n,\cdot)) ||_{BV(0,\ell_m)} \leq {{\ell_m}} \sqrt{\dfrac{k}{\mu}} \dfrac{|a^{\ast} - \alast|}{|1 - a^{\ast}|^{5/2}},
	\label{eqn:vel_dbv}\\
	\label{eqn:um_bound}
	&\hspace{-0.5cm}||( \mu \alpha_{h,\delta}(t_n,\cdot) \partial_x u_{h,\delta}(t_n,\cdot))^{-}||_{L^\infty(0,{{\ell_m}})} 
	\leq  {{\ell_m}} \sqrt{\dfrac{k}{\mu}} \dfrac{|a^{\ast} - \alast|}{|1 - a^{\ast}|^{5/2}}, \text{ and }\\
	&\hspace{-0.5cm}||\mu \alpha_{h,\delta}(t_n,\cdot) \partial_x u_{h,\delta}(t_n,\cdot)||_{L^\infty(0,{{\ell_m}})} \leq {{\ell_m}} \sqrt{\dfrac{k}{\mu}} \dfrac{|a^{\ast} - \alast|}{|1 - a^{\ast}|^{5/2}} + \dfrac{a^{\ast}(a^{\ast} - \alast)}{(1 - a^{\ast})^2}.
	\label{eqn:up_bound}
	\end{align}
	\end{subequations}
\end{lemma}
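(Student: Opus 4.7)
The central object is the piecewise constant function
\[
G_h^n := \mu\alpha_h^n\,\partial_x\widetilde{u}_h^n - \mathscr{H}(\alpha_h^n),
\]
which takes a value $G_j$ on each cell $\mathcal{X}_j$. Rearranging the discrete weak formulation~\eqref{eqn:dweak_vel} gives, for all $v_h\in\mathcal{S}_{0,h}^n$,
\[
(G_h^n,\partial_x v_h)_{\Omega_h^n} \;=\; -k\Bigl(\tfrac{\alpha_h^n}{1-\alpha_h^n}\widetilde{u}_h^n,\,v_h\Bigr)_{\Omega_h^n},
\]
which identifies $G_h^n$ as a discrete antiderivative of the source $k\alpha u/(1-\alpha)$. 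Testing with the Lagrange hat function $\phi_j$ at an interior node ($1\le j\le J_n-1$), and also at the free endpoint ($j=J_n$, which is admissible since $\mathcal{S}_{0,h}^n$ imposes no constraint at $\ell_h^n$), produces
\[
G_j - G_{j-1} \;=\; k\!\int \tfrac{\alpha_h^n\widetilde{u}_h^n}{1-\alpha_h^n}\phi_j\,\mathrm{d}x, \qquad G_{J_n-1} \;=\; -k\!\int\tfrac{\alpha_h^n \widetilde{u}_h^n}{1-\alpha_h^n}\phi_{J_n}\,\mathrm{d}x.
\]

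For the bounded variation estimate~\eqref{eqn:vel_dbv}, I would sum the first identity and use the partition-of-unity bound $\sum_j\phi_j\le 1$ to obtain $\sum_j|G_j-G_{j-1}|\le k\int_0^{\ell_h^n}\alpha_h^n|\widetilde{u}_h^n|/(1-\alpha_h^n)\,\mathrm{d}x$. A Cauchy--Schwarz split of this integral into the factors $\sqrt{\alpha_h^n/(1-\alpha_h^n)}$ and $\sqrt{\alpha_h^n}\widetilde{u}_h^n/\sqrt{1-\alpha_h^n}$, combined with the sharper form of Lemma~\ref{lemma:dvel_exis} (namely $\|\sqrt{\alpha_h^n}\widetilde{u}_h^n/\sqrt{1-\alpha_h^n}\|_{0,\Omega_h^n}\le \sqrt{\ell_m/(k\mu)}\,|a^{\ast}-\alast|/|1-a^{\ast}|^2$, which is already contained in the proof of that lemma before its two contributions are merged), yields the required right-hand side. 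The jump of $G_h^n$ across $\ell_h^n$ is controlled by the boundary identity for $G_{J_n-1}$; outside $\Omega_h^n$ the extension~\eqref{eqn:uhn} gives $\partial_x u_{h,\delta}\equiv 0$, and the choice of $\ath$ relative to $\alast$ ensures that $\mathscr{H}(\alpha_{h,\delta})$ contributes no extra variation on $(\ell_h^n,\ell_m)$.

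The pointwise estimate~\eqref{eqn:um_bound} is obtained by telescoping from the boundary: combining the two identities above yields
\[
G_j \;=\; -k\!\int_0^{\ell_h^n}\tfrac{\alpha_h^n\widetilde{u}_h^n}{1-\alpha_h^n}\,\psi_j\,\mathrm{d}x, \qquad \psi_j:=\sum_{i=j+1}^{J_n}\phi_i\in[0,1],
\]
where $\psi_j$ is a ramp-like weight supported in $(x_j,\ell_h^n)$. The same Cauchy--Schwarz argument then bounds $\|G_h^n\|_{L^\infty}$ by the constant in~\eqref{eqn:um_bound}. Since $\mathscr{H}(\alpha_h^n)\ge 0$, one has $(\mu\alpha_h^n\partial_x u_h^n)^-=(G_h^n+\mathscr{H}(\alpha_h^n))^-\le(G_h^n)^-\le\|G_h^n\|_{L^\infty}$, which gives~\eqref{eqn:um_bound}; and~\eqref{eqn:up_bound} follows from the triangle inequality $|\mu\alpha_h^n\partial_x u_h^n|\le |G_h^n|+\mathscr{H}(\alpha_h^n) \le \|G_h^n\|_{L^\infty} + a^{\ast}(a^{\ast}-\alast)/(1-a^{\ast})^2$, using $\alpha_h^n\le a^{\ast}$.

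The main conceptual hurdle is recognising that one should track $G_h^n := \mu\alpha_h^n\partial_x\widetilde{u}_h^n - \mathscr{H}(\alpha_h^n)$ rather than $\mu\alpha_h^n\partial_x\widetilde{u}_h^n$ alone: only this combination appears as a discrete gradient in the weak formulation, and only for this combination does the free-boundary test with $\phi_{J_n}$ produce the localised identity that plays the role of the continuous Neumann condition $G(\ell)=0$ and anchors the telescoping. Once that observation is made, the remaining work is bookkeeping with Cauchy--Schwarz and the energy estimate of Lemma~\ref{lemma:dvel_exis}.
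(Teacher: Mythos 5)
Your proof is correct and follows essentially the same route as the paper: both identify $G_h^n=\mu\alpha_h^n\partial_x u_h^n-\mathscr{H}(\alpha_h^n)$ as the quantity whose cell-to-cell jumps are produced by testing \eqref{eqn:dweak_vel} with the interior and endpoint hat functions, both sum these identities using $\sum_j\varphi_{h,j}^n\le 1$ and Cauchy--Schwarz against the energy estimate of Lemma~\ref{lemma:dvel_exis}, and both deduce \eqref{eqn:um_bound}--\eqref{eqn:up_bound} from the sign and uniform bound of $\mathscr{H}$. Your telescoping from the free endpoint to get the $L^\infty$ bound is just the paper's observation that $G_h^n$ vanishes at $x=\ell_m$ so its sup norm is controlled by its total variation.
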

\begin{proof}
Consider the Lagrange $\mathbb{P}^1$ nodal basis functions $\{ \varphi_{h,j}^n \}_{\{1 \leq j \leq J_n \}}$ of $\mathcal{S}_{0,h}^n$, and choose $v_h^n = \varphi_{h,j}^n$ in~\eqref{eqn:dweak_vel} for $j\in\{1,\ldots,J_n-1\}$, where $J_n = \ell_h^n/h$, to obtain
\begin{subequations}
\begin{multline}
\mu \left(\alpha_{j-1}^n \partial_{x} \widetilde{u}_{h\vert_{\mathcal{X}_{j-1}}}^n  - \alpha_{j}^n \partial_{x} \widetilde{u}_{h\vert_{\mathcal{X}_{j}}}^n  \right) - \left(\mathscr{H}(\alpha_{j}^n) - \mathscr{H}(\alpha_{j-1}^n){} \right)  \\
= -k\int_{x_{j-1}}^{x_{j+1}} \dfrac{\alpha_{h,\delta}(t_n,\cdot)}{1 - \alpha_{h,\delta}(t_n,\cdot)} \widetilde{u}_h^n  \varphi_{h,j}^n\,\mathrm{d}x. 
\label{eqn:bv_1}
\end{multline}
Choose $v_h^n = \varphi_{h,J_n}^n$ in~\eqref{eqn:dweak_vel} to obtain
\begin{equation}
\mu \alpha_{j}^n \partial_{x} \widetilde{u}_{h\vert_{\mathcal{X}_{J_n-1}}}^n - \mathscr{H}(\alpha_{J_n-1}^n)  
= -k\int_{x_{J_n-1}}^{x_{J_n}} \dfrac{\alpha_{h,\delta}(t_n,\cdot)}{1 - \alpha_{h,\delta}(t_n,\cdot)} \widetilde{u}_h^n \varphi_{h,J_n}^n\,\mathrm{d}x. 
\label{eqn:bv_2}
\end{equation}
\end{subequations}
Recall that $u^n_h=\widetilde{u}^n_h$ on $(0,\ell_h^n)$, and that $u^n_h=0=\mathscr{H}(\alpha^n_j)$ outside this interval. Then, for any $j\in\{1,\ldots,J-1\}$, \eqref{eqn:bv_1} and \eqref{eqn:bv_2} imply
\begin{multline*}
\mu \left(\alpha_{j-1}^n \partial_{x} u_{h\vert_{\mathcal{X}_{j-1}}}^n  - \alpha_{j}^n \partial_{x} u_{h\vert_{\mathcal{X}_{j}}}^n  \right) - \left(\mathscr{H}(\alpha_{j}^n) - \mathscr{H}(\alpha_{j-1}^n){} \right)  \\
= -k\int_{x_{j-1}}^{x_{j+1}} \dfrac{\alpha_{h,\delta}(t_n,\cdot)}{1 - \alpha_{h,\delta}(t_n,\cdot)} u_h^n \varphi_{h,j}^n\,\mathrm{d}x,
\end{multline*}
where $\varphi_{h,j}^n=0$ if $j\ge J_n+1$. Then, triangle inequality, summation over  $j=1,\ldots,J-1$, Cauchy--Schwarz inequality,~\eqref{eqn:vel_d_ene_1}, and an observation that $0\le \varphi^n_{h,j-1}+\varphi^n_{h,j} \le 1$ everywhere leads to \eqref{eqn:vel_dbv}. As a consequence, since $\mu \alpha_{h,\delta}(t_n,\cdot) \partial_x u_{h,\delta}(t_n,\cdot) - \mathscr{H}(\alpha_{h,\delta}(t_n,\cdot))$ vanishes at $x=\ell_m$,
\begin{eqnarray*}
||  \mu \alpha_{h,\delta}(t_n,\cdot) \partial_x u_{h,\delta}(t_n,\cdot) - \mathscr{H}(\alpha_{h,\delta}(t_n,\cdot))||_{L^\infty(0,{{\ell_m}})} \leq {{\ell_m}} \sqrt{\dfrac{k}{\mu}} \dfrac{|a^{\ast} - \alast|}{|1 - a^{\ast}|^{5/2}}.
\end{eqnarray*}
Since $0\le \mathscr{H}(\alpha_{h,\delta}(t_n,\cdot))\le 
a^\ast(a^\ast-\alast)/(1-a^\ast)^2$, the bounds~\eqref{eqn:um_bound} and~\eqref{eqn:up_bound} follow.
\end{proof}

The positivity and boundedness of $\alpha_{h,\delta}(t_{n+1},\cdot)$ are shown next. The next proposition establishes the existence of a finite time $T_{\ast}$ such that the strong induction assumption holds in $[0,T_{\ast})$.

\begin{proposition}[Step \ref{is.3}]\label{prop:dalpha_max}
There exists $T_{\ast} > 0$ such that if $n+1 \leq N_{\ast} := T_\ast/\delta$, then 
\begin{equation*}
 a_{\ast}\le \min_{j\,:\,x_{j} \in \Omega_h^{n+1}} \alpha_{j}^{n+1} \le \max_{0 \leq j \leq J-1}\alpha_{j}^{n+1} \leq a^{\ast}.
\end{equation*}
\end{proposition}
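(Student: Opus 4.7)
The plan is to rewrite the scheme~\eqref{eqn:upwind} (shifted from $n$ to $n+1$) in the maximum-principle-friendly form
\begin{equation*}
\alpha_j^{n+1}\bigl(1 + \delta d_j^n\,\mathbf{1}_{\{\alpha_j^{n+1}>\ath\}}\bigr) = B_{j-1}\alpha_{j-1}^n + B_j\alpha_j^n + B_{j+1}\alpha_{j+1}^n + S_j + \delta\, d_j^n\,\ath\,\mathbf{1}_{\{\alpha_j^{n+1}>\ath\}},
\end{equation*}
with $B_{j-1}=\tfrac{\delta}{h}(u_j^n)^+$, $B_{j+1}=\tfrac{\delta}{h}(u_{j+1}^n)^-$, $B_j=1-\tfrac{\delta}{h}((u_{j+1}^n)^+ + (u_j^n)^-)$, and $S_j := \delta(\alpha_j^n-\ath)^+(1-\alpha_j^n)b_j^n \ge 0$. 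Under the inductive hypothesis $a_\ast<\alpha^n<a^\ast$ on $\Omega_h^n$, the bound~\eqref{eqn:u_bound} holds at step $n$, and combined with the CFL~\eqref{eqn:cfl_condition} yields $\tfrac{\delta}{h}\|u_{h,\delta}(t_n,\cdot)\|_\infty\le 1/2$; hence $B_{j-1},B_j,B_{j+1}\ge 0$. A direct computation gives $B_{j-1}+B_j+B_{j+1} = 1 - \delta\,\partial_x u_{h|\mathcal{X}_j}^n$, which is close to $1$ since \eqref{eqn:up_bound} together with $\alpha_j^n\ge a_\ast$ furnishes a constant $K$, depending only on $a_\ast,a^\ast,\alast$ and the model parameters, with $|\partial_x u_{h|\mathcal{X}_j}^n|\le K$.

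For the upper bound, let $M_n := \max_{0\le j\le J-1}\alpha_j^n$. Bounding each $\alpha_k^n$ on the right-hand side by $M_n$, $S_j$ by $\delta(a^\ast-\ath)(1-a_\ast)\|b_j^n\|_\infty$, and $\delta d_j^n\ath$ by $\delta\ath\|d_j^n\|_\infty$, then dividing by $1+\delta d_j^n\mathbf{1}_{\{\cdot\}}\ge 1$, one obtains the recursion
\begin{equation*}
M_{n+1} \le M_n(1+\delta K) + \delta C,
\end{equation*}
with $C$ depending only on $a_\ast,a^\ast,\alast$ and the model parameters. Discrete Gronwall gives $M_n\le (M_0+C/K)e^{Kn\delta}-C/K$; since $M_0\le m_{02}<a^\ast$, there exists $T_\ast^{\rm up}>0$, depending only on $a^\ast-m_{02}$, $K$ and $C$, with $M_n\le a^\ast$ for all $n\delta\le T_\ast^{\rm up}$.

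For the lower bound, let $m_n:=\min_{j:\,x_j\in\Omega_h^n}\alpha_j^n$. The case $\alpha_j^{n+1}>\ath$ is immediate since $\ath>a_\ast$. If $\alpha_j^{n+1}\le\ath$ the indicator vanishes, leaving $\alpha_j^{n+1}=B_{j-1}\alpha_{j-1}^n+B_j\alpha_j^n+B_{j+1}\alpha_{j+1}^n+S_j$; dropping the nonnegative $S_j$, it suffices to check that each $\alpha_k^n$ carrying a nonzero coefficient is $\ge m_n$. This is automatic when (i) $j=0$, as $u_0^n=0$ kills $B_{-1}$, and when (ii) a neighbour $k$ lies strictly outside $\Omega_h^n$, as $u_k^n=0$ there kills $B_k$. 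The only delicate sub-case is a neighbour at the boundary node $J_n$, which can be absorbed using $\alpha_{J_n}^n<\ath$ together with the $L^\infty$ bound on $u_h^n(\ell_h^n)$ provided by~\eqref{eqn:u_bound}. One thereby obtains a Gronwall-type inequality $m_{n+1}\ge m_n(1-\delta K')$ with $K'$ independent of $n,h,\delta$; iteration gives $m_n\ge m_{01}(1-\delta K')^n\ge a_\ast$ for all $n\delta\le T_\ast^{\rm lo} := \log(m_{01}/a_\ast)/K'$, and $\log(m_{01}/a_\ast)>0$ because $a_\ast<m_{01}$ by assumption. Taking $T_\ast:=\min(T_\ast^{\rm up},T_\ast^{\rm lo})$ closes the induction.

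The main obstacle is the lower bound near the moving tumour boundary, in particular for cells of $\Omega_h^{n+1}\setminus\Omega_h^n$ created when the front advances between two time steps, whose time-$n$ upstream neighbours satisfy only $\alpha^n<\ath$. One must exploit the defining property $\alpha_{J_{n+1}-1}^{n+1}\ge\ath>a_\ast$ of $\ell_h^{n+1}$ together with the CFL condition (which limits the front's displacement per step to at most one cell) to argue that such ``new'' cells cannot realise the minimum of $\alpha^{n+1}$ over $\Omega_h^{n+1}$ below $a_\ast$; only the boundary node $J_n$ demands dedicated analysis, and the residual flux there is controlled via the uniform $L^\infty$ bound on $u_h^n$.
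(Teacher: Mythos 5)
Your strategy --- rewriting the update as a nonnegative combination of time-$n$ values under the CFL condition and running a discrete Gronwall argument in both directions --- is the same as the paper's, and your upper bound goes through. The genuine gap is exactly where you flag it, at the expanding front, and the tool you propose there cannot work. In the case $\Omega_h^{n+1}=\Omega_h^{n}\cup\mathcal{X}_{J_n}$, the cell $\mathcal{X}_{J_n-1}$ is an \emph{interior} cell of $\Omega_h^{n+1}$, so the domain-selection property $\alpha_{J_{n+1}-1}^{n+1}\ge\ath$ does not cover it, and its update carries the weight $\tfrac{\delta}{h}u_{J_n}^{n\,-}$ on the neighbour $\alpha_{J_n}^{n}$, which is only known to be $<\ath$ and may be near $0$. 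Dropping that term costs $\tfrac{\delta}{h}u_{J_n}^{n\,-}$ in the sum of the remaining weights, and the combination of \eqref{eqn:u_bound} with \eqref{eqn:cfl_condition} only gives $\tfrac{\delta}{h}\|u_{h,\delta}(t_n,\cdot)\|_{L^\infty}\le 1/2$: this is an $O(1)$ loss per time step, not $O(\delta)$, so the recursion $m_{n+1}\ge m_n(1-\delta K')$ fails and the iteration collapses. The paper closes this with a \emph{sign} argument, not a size argument: if the domain expands, then $\alpha_{J_n}^{n+1}\ge\ath>\alpha_{J_n}^{n}$ while the source is inactive on $\mathcal{X}_{J_n}$ (because $\alpha_{J_n}^{n}<\ath$) and $u_{J_n+1}^{n}=0$, so mass must enter $\mathcal{X}_{J_n}$ through the node $x_{J_n}$; this forces $u_{J_n}^{n}>0$, hence $u_{J_n}^{n\,-}=0$ and the offending coefficient vanishes identically. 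Some version of this observation is indispensable; no quantitative bound on $u$ substitutes for it.

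Two secondary points. First, the recursion $m_{n+1}\ge m_n(1-\delta K')$ with $m_n$ the minimum over $\Omega_h^{n}$ is not correct as stated even away from the expansion issue, because the last cell $\mathcal{X}_{J_{n+1}-1}$ is only guaranteed to satisfy $\alpha_{J_{n+1}-1}^{n+1}\ge\ath$, and $\ath$ may well be smaller than $m_n(1-\delta K')$; the quantity that obeys a clean one-step inequality is $\min\{m_n,\ath\}$, as in the paper, which changes your $T_\ast^{\mathrm{lo}}$ from $\log(m_{01}/a_\ast)/K'$ to one of the form $\log(\ath/a_\ast)/K'$ (positive since $a_\ast<\ath$ by assumption). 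Second, you omit the additional restriction $T_\ast\le T_\ell:=\rho\mathscr{C}_{\CFL}(\ell_m-\ell_0)$ needed so that the discrete domain, which may grow by $h$ per step, remains inside $(0,\ell_m)$.
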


\begin{proof}
Substitute $\up{n}{j+1} = u_{j+1}^{n} + \um{n}{j+1}$ and $\um{n}{j} = \up{n}{j} - u_{j}^{n}$ in~\eqref{eqn:upwind} written for $n+1$ instead of $n$ to obtain
\begin{align}
\ats{n+1}{j} + \delta (\ats{n+1}{j} - \ath)^{+} d_{j}^{n}  ={}& \ats{n}{j} + \delta (\ats{n}{j} - \ath)^{+}(1-\ats{n}{j}) b_{j}^{n} -\dfrac{\delta}{h} \ats{n}{j}\left(u^{n}_{j+1} - u^{n}_{j} \right) \nonumber \\
&+ \dfrac{\delta}{h} \left(\um{n}{j+1}(\ats{n}{j+1}-\ats{n}{j}) + \up{n}{j}(\ats{n}{j-1}-\ats{n}{j})\right).
\label{eqn:n1_sten}
\end{align}
Define the linear combination
\begin{multline}
\mathscr{L}(\ats{n}{j-1},\ats{n}{j},\ats{n}{j+1}) := \dfrac{\delta}{h}\up{n}{j}\ats{n}{j-1} + \left( 1 - \frac{\delta}{h}\um{n}{j+1} - \frac{\delta}{h}\up{n}{j} \right)\ats{n}{j} + \dfrac{\delta}{h}\um{n}{j+1}\ats{n}{j+1}.
\label{eqn:cvx_comb}
\end{multline}
 The conditions~\eqref{eqn:cfl_condition} and \eqref{eqn:u_bound} show that all the coefficients in~\eqref{eqn:cvx_comb} are positive, and thus this linear combination is convex. Moreover, \eqref{eqn:n1_sten} can be recast as
\begin{align}
\ats{n+1}{j} + \delta (\ats{n+1}{j} - \ath)^{+} d_{j}^{n} ={}& \mathscr{L}(\ats{n}{j-1},\ats{n}{j},\ats{n}{j+1}) + \delta (\ats{n}{j} - \ath)^{+}(1-\ats{n}{j}) b_{j}^{n} \nonumber \\
&- \delta \ats{n}{j} \partial_x u_{h\vert_{\mathcal{X}_{j}}}^n.
 \label{eqn:n1_sten_2}
\end{align}
Since $0 \le c^n_h\le 1$ (this is the induction hypothesis~\ref{is.4} at step $n$), we have $0\le d^n_j\le s_2$ and $b_j^n\ge 0$.
Then, a use of~\eqref{eqn:up_bound} and the positivity of $1-\alpha_j^n$ in~\eqref{eqn:n1_sten_2} yield
\begin{gather}
    \ats{n+1}{j} (1 + \delta s_2) \geq \min(\alpha_{j-1}^{n},\,\alpha_{j}^{n},\, \alpha_{j+1}^{n}) - \delta \fmin,
    \label{eqn:pos_bound_1}
\end{gather}
where 
\begin{equation*}
    \fmin = \ell_m \dfrac{\sqrt{k}}{\mu^{3/2}} \dfrac{|a^{\ast} - \alast|}{|1 - a^{\ast}|^{5/2}} + \dfrac{1}{\mu}\dfrac{a^{\ast}(a^{\ast} - \alast)}{(1 - a^{\ast})^2}.
\end{equation*}
Step~\ref{ds.b} implies that  $\alpha_{j-1}^n, \alpha_j^n,\alpha_{j+1}^n <  \ath$ for $j \geq J_{n} + 1$. This fact along with an observation that $u_h^{n} = 0$ in $(0,{{\ell_m}}) \setminus \overline{\Omega_h^{n}}$ ensures that the right hand side of~\eqref{eqn:n1_sten_2} is strictly bounded above by $\ath$ (the linear combination remains, and the other terms vanish); hence $\alpha_j^{n+1} < \ath$, for all $j \ge J_{n+1}$.
Thus the domain $\Omega_{h}^{n+1}$ is either a subset of  $\Omega_{h}^{n}$ or equal to $\Omega_{h}^{n} \cup \mathcal{X}_{J_{n}}$. These two cases are considered separately. 

\begin{case}[{$\Omega_{h}^{n+1} \subseteq \Omega_{h}^{n}$}: tumour does not grow in the $(n+1)^{th}$ level]
If $\Omega_{h}^{n+1} = \Omega_{h}^{n}$, the last value $\alpha^{n + 1}_{J_{n+1}-1}$ depends on $\alpha^{n}_{J_{n}-2}$, $\alpha^{n}_{J_{n}-1}$, and $\alpha^{n}_{J_{n}}$ (see Figure~\ref{fig:case_1}). The domain selection procedure \ref{ds.b}  shows  $\alpha_{J_{n+1} - 1}^{n + 1} \geq \ath$. All other values $\alpha_{j}^{n + 1}$ depend on $\alpha_{k}^{n}$ with $k \leq J_{n - 1}$, which are values inside $\Omega_h^n$. Therefore, for all $j \leq J_{n+1} - 1$, by~\eqref{eqn:pos_bound_1}
\begin{equation}
    \ats{n+1}{j} (1 + \delta s_2) \geq \min\bigg\{ \bigg(\min_{k\,:\,x_{k} \in \Omega_h^{n}} \alpha_{k}^{n} \bigg),\ath \bigg\} - \delta\fmin.
    \label{eqn:case1_bound}
\end{equation}
The same argument follows in the case  $\Omega_{h}^{n+1}\subset\Omega_h^n$ (see Figure~\ref{fig:case_12}).
\end{case}
\begin{figure}[h!]
   \centering
   \begin{subfigure}[b]{12cm}
   \centering
       \includegraphics[scale=1.0]{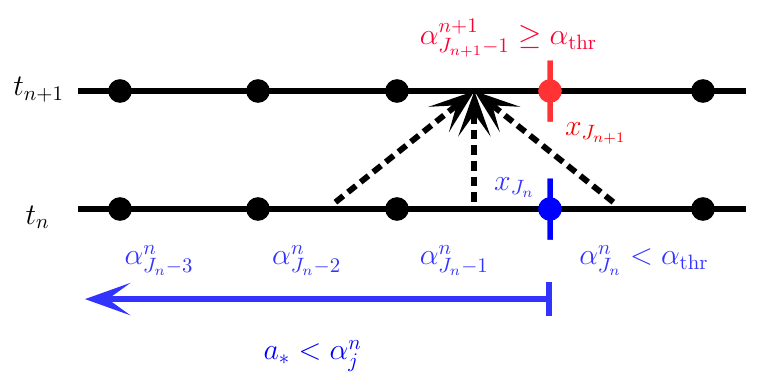}
       \caption{$\Omega_{h}^{n+1} = \Omega_{h}^n$ : observe that in 
       this case $x_{J_{n+1}} = x_{J_n}$.}
        \label{fig:case_1}
   \end{subfigure} \\
    \begin{subfigure}[b!]{6cm}
        \includegraphics[scale=1.0]{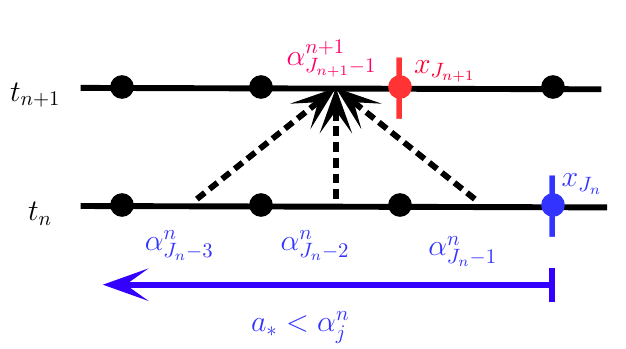}
       \caption{$\Omega_{h}^{n+1} \subset \Omega_{h}^n$.}
        \label{fig:case_12}
    \end{subfigure}
    \begin{subfigure}[b!]{6.2cm}
        \includegraphics[scale=1.0]{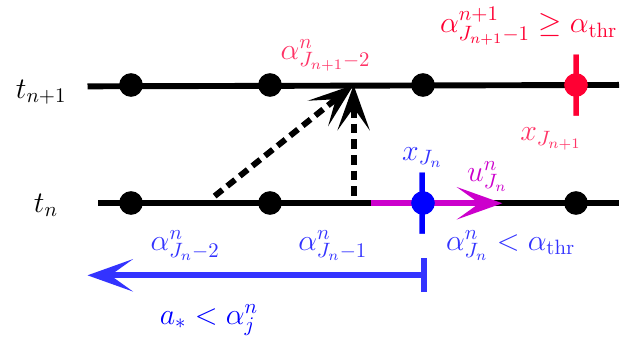}
        \caption{$\Omega_{h}^{n+1} = \Omega_h^n\cup\mathcal{X}_{J_n}$.}
        \label{fig:case_2}
    \end{subfigure}
    \caption{Dependency of $\alpha_{j}^{n+1}$ on $\alpha_{j}^n$. Observe that in Figure~\ref{fig:case_2} the direction of $u_{J_n}^n$ is rightwards, which eliminates the dependency of $\alpha_{J_{n+1}-2}^n$ on $\alpha_{J_n}^n$.}
\end{figure}

\begin{case}[{$\Omega_{h}^{n+1} = \Omega_{h}^{n} \cup \mathcal{X}_{J_{n}}$}: tumour expands]
By the domain selecting procedure \ref{ds.b} we have $\alpha_{J_{n+1}-1}^{n + 1} \geq \ath$ (see Figure~\ref{fig:case_2}). This along with $\alpha_{J_{n}}^{n} < \ath$ and $u_{j}^n = 0$ for $j > J_n$, implies that some volume fraction must flow from $\Omega_h^n$ to $\mathcal{X}_{J_n}$. This implies that $u^n_{J_n}>0$. We note here that our usage of $(\alpha-\ath)^+$ in the source term is essential to ensure this property, since the reaction term cannot yield the growth above $\ath$ in $\mathcal X_{J_n}$. Therefore, since $J_{n+1}-2=J_n-1$ in this case, choosing $j=J_n-1$ in~\eqref{eqn:n1_sten_2}, the term involving $\alpha^n_{j+1}$ vanishes from $\mathscr{L}(\ats{n}{j-1},\ats{n}{j},\ats{n}{j+1})$ (since it is multiplied by $\um{n}{J_n}$) and we obtain
\begin{equation}
    \alpha_{J_{n+1}-2}^{n+1}(1 + \delta s_2) \geq \min(\alpha_{J_{n}-2}^{n},\alpha_{J_{n}-1}^{n}) - \delta \fmin.
    \label{eqn:case2_bound}
\end{equation}
The values $\alpha_{j}^{n + 1}$ with $j \leq J_{n+1} - 3$ can be dealt as in~\eqref{eqn:case1_bound}.
\end{case}
\noindent Combine~\eqref{eqn:case1_bound} and~\eqref{eqn:case2_bound} to obtain, for $j \leq J_{n+1}-1$
\begin{equation*}
    \alpha_{j}^{n + 1}(1 + \delta s_2) \geq  \min\bigg\{ \bigg(\min_{k\,:\,x_{k} \in \Omega_h^{n}} \alpha_{k}^{n} \bigg),\ath \bigg\} - \delta \fmin.
\end{equation*}
A use of $(1 + \delta s_2)^{-1} \geq \exp(-\delta s_2)$ yields
\begin{equation*}
  \min_{j\,:\,x_j\in\Omega^{n+1}_h}\alpha_j^{n+1} \geq \exp(-\delta s_2)\min\bigg\{ \bigg(\min_{k\,:\,x_{k} \in \Omega_h^{n}} \alpha_{k}^{n} \bigg),\ath \bigg\} - \delta\exp(-\delta s_2) \fmin.
\end{equation*}
This relation is obviously also true if the left--hand side is replaced by $\ath$, and therefore,
\begin{align}
    \min\bigg\{\bigg(\min_{j\,:\,x_j\in\Omega^{n+1}_h}\alpha_j^{n+1}\bigg),\ath\bigg\}
\geq{}& \exp(-\delta s_2)\min\bigg\{ \bigg(\min_{k\,:\,x_{k} \in \Omega_h^{n}} \alpha_{k}^{n} \bigg),\ath \bigg\} \nonumber \\ &- \delta\exp(-\delta s_2) \fmin.
    \label{eqn:pos_bound_2}
\end{align}
Define 
\begin{equation*}
y_n=\exp{(s_2 n\delta)}\min\bigg\{ \bigg(\min_{k\,:\,x_{k} \in \Omega_h^{n}} \alpha_{k}^{n} \bigg),\ath \bigg\}.
\end{equation*}
The estimate \eqref{eqn:pos_bound_2} shows that
\begin{equation*}
    y_{n + 1} \geq y_{n} - \delta \exp(s_2 n \delta) \fmin.
\end{equation*}
Write this relation for a generic $k\le n$, and sum over $k=0,\ldots,n$ to obtain 
\begin{equation}
    y_{n + 1} \geq y_{0} - \sum_{n=0}^{n} \delta \exp(s_2 n\delta) \fmin.
    \label{eqn:pos_bound_4}
\end{equation}
The fact that the sum in~\eqref{eqn:pos_bound_4} is the lower Riemann sum for the function $\exp(s_2\,\tau)$ from $\tau = 0$ to $\tau = (n + 1)\delta$ yields
\begin{equation*}
    y_{n + 1} \geq y_{0} - \left(\dfrac{\exp(s_2(n + 1)\delta) - 1}{s_2} \right) \fmin. 
\end{equation*}
Since $y_0=\ath$, a selection of time $t_{n + 1} = (n + 1)\delta$ such that 
\begin{equation}
    t_{n + 1} \le T_m := \dfrac{1}{s_2} \ln{\left(\dfrac{\fmin + s_2 \ath}{\fmin + a_{\ast}s_2} \right)}
    \label{eqn:max_t_tm}
\end{equation}
yields $y_{n + 1} \ge a_{\ast}\exp(s_2 t_{n + 1})$, and this leads to $\min\{\alpha_{j}^{n + 1}\,:\,x_{j} \in \Omega_h^{n + 1}\} \ge a_{\ast}.$ To obtain an upper bound, note that~\eqref{eqn:n1_sten_2} yields 
\begin{align}  \label{eqn:up_bd_1}
    \alpha_{j}^{n+1}  \leq \max_{0 \leq k \leq J-1}{\alpha_k^n} + \delta (1 - \ath)  + \dfrac{\delta}{\mu} ||( \mu \alpha_{h,\delta}(t_n,\cdot) \partial_x u_h^n)^{-}||_{L^\infty(0,{{\ell_m}})}
    \end{align}
\edit{for every $0 \le j \le J-1$}. Define the function 
\begin{equation}
    \fmax = 1 - \ath + \dfrac{\ell_m \sqrt{k}}{a_{\ast}\mu^{3/2}}\dfrac{|a^{\ast} - \alast|}{|1 - a^{\ast}|^{5/2}}.
    \label{eqn:fmax}
\end{equation}
Then,~\eqref{eqn:up_bd_1} and~\eqref{eqn:um_bound} imply
\begin{equation}
    \max_{0 \leq j \leq J-1}\alpha_{j}^{n+1} \leq \max_{0 \leq j \leq J-1}{\alpha_{j}^n} + \delta \fmax.
    \label{eqn:up_bd_2}
\end{equation}
Write this relation for a generic $k\le n$ and sum over $k=0,\ldots,n$ to obtain
\[
    \max_{0 \leq j \leq J-1}\alpha_{j}^{n+1} \le \max_{0 \leq j \leq J-1}{\alpha_{j}^0} + (n + 1)\delta \fmax 
    \leq m_{02} + t_{n + 1}\fmax.
\]
Selection of time $t_{n + 1}$ such that 
\begin{equation}
t_{n + 1} \leq \frac{a^{\ast} - m_{02}}{\fmax} := T_M
\label{eqn:tmax}
\end{equation}
implies $\max_{0 \leq j \leq J-1}\alpha_{j}^{n+1} \le a^{\ast}$. Finally to ensure that the extended domain $(0,\ell_m)$ contains the time--dependent domains $(0,\ell(t))$ for every $t \in [0,T_{\ast}]$ we impose a restriction on $T_{\ast}$.  Since the domain increases at most by $h$ at each time step, and there are  $T_{\ast}/\delta$ such time steps, we set $T_{\ast} < T_{\ell} := \rho\mathscr{C}_{\CFL}(\ell_m-\ell_0)\le \frac{\delta}{h}(\ell_m-\ell_0)$. Choose $T^{\ast}  = \min(T_m,T_M,T_{\ell})$ to conclude the proof. 
\end{proof}

\begin{remark}
	\label{rem:eqviv_pihnorm}
	The norm $||\cdot||_{0,\Omega_{h}^n}$ in the space $\mathcal{S}_{h}^n$ is equivalent to the norm $||\Pi_{h}{\cdot}||_{0,\Omega_{h}^n}$. In fact, we have for all $w \in S_h^n$, $(1/\sqrt{3})||\Pi_{h} w||_{0,\Omega_{h}^n} \leq || w||_{0,\Omega_{h}^n} \leq ||\Pi_{h} w||_{0,\Omega_{h}^n}.$
	This is an easy consequence of estimating $ || w||_{0,\Omega_{h}^n}$ by Simpson's quadrature rule, which is exact for second degree polynomials. 
\end{remark}
\begin{lemma}[Step \ref{is.4}]
	\label{lemma:dot_max}
	The equation \eqref{eqn:bform_ot} has a unique solution $\widetilde{c}_{h}^{n+1}$, and it holds $0\le c^{n+1}_h\le 1$.	 
\end{lemma}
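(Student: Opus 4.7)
\textbf{Plan for Lemma~\ref{lemma:dot_max}.} My plan is to settle the well-posedness of~\eqref{eqn:bform_ot} by Lax--Milgram, and then to establish $0\le c_h^{n+1}\le 1$ by a discrete maximum principle in which mass lumping plays the decisive role.

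\emph{Well-posedness.} I would lift the Dirichlet value by setting $\widetilde{c}_h^{n+1}=z_h+\varphi_h$, where $\varphi_h\in\mathcal{S}_h^{n+1}$ is the Lagrange basis function associated with the node $x=\ell_h^{n+1}$ (so that $\varphi_h(\ell_h^{n+1})=1$), and reformulate~\eqref{eqn:bform_ot} as a variational problem for $z_h\in\mathcal{S}_{h,0}^{n+1}$. The bilinear form
\[
B(w,v) := (\Pi_h w,\Pi_h v)_{\Omega_h^{n+1}} + \delta\lambda(\partial_x w,\partial_x v)_{\Omega_h^{n+1}} + Q\delta\bigg(\frac{\alpha_h^{n+1}\,\Pi_h w}{1+\widehat{Q}_1|\Pi_h c_h^n|},\,\Pi_h v\bigg)_{\Omega_h^{n+1}}
\]
is obviously continuous. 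Its coercivity on $\mathcal{S}_{h,0}^{n+1}$ follows from the non-negativity of the reaction contribution (using $\alpha_h^{n+1}>0$ from Proposition~\ref{prop:dalpha_max}), the equivalence of $\|\cdot\|_{0,\Omega_h^{n+1}}$ and $\|\Pi_h\cdot\|_{0,\Omega_h^{n+1}}$ on $\mathcal{S}_h^{n+1}$ provided by Remark~\ref{rem:eqviv_pihnorm}, and the discrete Poincar\'e inequality available on $\mathcal{S}_{h,0}^{n+1}$. The right-hand side is continuous, so Lax--Milgram supplies a unique $z_h$ and hence a unique $\widetilde{c}_h^{n+1}$.

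\emph{Maximum principle.} For the lower bound, since $\widetilde{c}_h^{n+1}(\ell_h^{n+1})=1$, the function $v:=(\widetilde{c}_h^{n+1})^-$ built by taking the negative part at each node and extending by $\mathbb{P}^1$ interpolation lies in $\mathcal{S}_{h,0}^{n+1}$. Plugging it into~\eqref{eqn:bform_ot}, mass lumping collapses the accumulation term to $-\|\Pi_h v\|_{0,\Omega_h^{n+1}}^2$; the previous-step term $(\Pi_h c_h^n,\Pi_h v)_{\Omega_h^{n+1}}$ is non-negative since $c_h^n\ge 0$ by the induction hypothesis and $v\ge 0$; and the reaction term is non-positive because $\widetilde{c}_h^{n+1}v=-|v|^2$ at every node. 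Coupled with the stiffness sign below, all contributions of the testing identity become non-positive (after moving the $\|\Pi_h v\|^2$ to the other side), forcing $v\equiv 0$. The upper bound is symmetric: I would test with $v:=(\widetilde{c}_h^{n+1}-1)^+\in\mathcal{S}_{h,0}^{n+1}$ (which vanishes at $\ell_h^{n+1}$ because $\widetilde{c}_h^{n+1}(\ell_h^{n+1})=1$), use $c_h^n\le 1$ to make $(\Pi_h(1-c_h^n),\Pi_h v)_{\Omega_h^{n+1}}\ge 0$, and observe that $\widetilde{c}_h^{n+1}(\widetilde{c}_h^{n+1}-1)^+\ge 0$ nodewise (the product vanishes unless $\widetilde{c}_h^{n+1}>1>0$), so all contributions are non-negative and $v\equiv 0$.

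\emph{Main obstacle.} The only non-routine step is the sign of $(\partial_x\widetilde{c}_h^{n+1},\partial_x v)_{\Omega_h^{n+1}}$, since $v$ is obtained by a nonlinear nodewise operation. I would handle it by a four-case analysis on each element $\mathcal{X}_j=(x_j,x_{j+1})$ according to the signs of $w_j$ and $w_{j+1}$ (with $w:=\widetilde{c}_h^{n+1}$ for the lower bound and $w:=\widetilde{c}_h^{n+1}-1$ for the upper bound): the integrated product equals $(w_{j+1}-w_j)(w_{j+1}^{\pm}-w_j^{\pm})/h$, and a short check in each of the four sign patterns yields the required sign ($\le 0$ with the negative part, $\ge 0$ with the positive part). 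This elementwise computation is the only place where the piecewise-linear structure of the test functions is used; mass lumping is exactly what makes every other term decouple to a nodewise sum that inherits the expected sign.
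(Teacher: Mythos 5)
Your proof is correct, but it follows a genuinely different route from the paper. The paper recasts \eqref{eqn:bform_ot} at step $n+1$ as the linear system $(M+\delta\lambda D+Q\delta S)\mathbf{c}_h^{n+1}=M\mathbf{c}_h^{n}-\delta\mathbf{b}_h$ and gets both bounds from an algebraic positivity result (Lemma~\ref{appen_id.g}): $M,S$ are positive diagonal lumped matrices, $D$ has nonpositive off-diagonal entries, so the solution operator $(\mathbb{I}+\delta M^{-1}(\lambda D+QS))^{-1}$ is positive; applying it once to $\mathbf{c}_h^n\ge 0$ with $\mathbf{b}_h\le 0$ gives $\mathbf{c}_h^{n+1}\ge 0$, and once to the shifted vector $\mathbf{c}_h^{n}-\mathbf{1}$ gives $\mathbf{c}_h^{n+1}\le 1$. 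You instead prove a discrete weak maximum principle by testing with the nodewise truncations $(\widetilde{c}_h^{n+1})^-$ and $(\widetilde{c}_h^{n+1}-1)^+$, which mirrors the continuous argument of Lemma~\ref{lemma:c_pos}. Your one genuinely delicate step --- the sign of the stiffness contribution --- is handled correctly: on each element it reduces to $(w_{j+1}-w_j)(w_{j+1}^{\pm}-w_j^{\pm})/h$, whose sign follows from the monotonicity of $s\mapsto s^{\pm}$, and your boundary checks ($1^-=0$ and $(1-1)^+=0$ at $x=\ell_h^{n+1}$) ensure admissibility of the test functions. Your Lax--Milgram argument for well-posedness (coercivity from the nonnegative reaction term, Remark~\ref{rem:eqviv_pihnorm}, and $\alpha_h^{n+1}>0$) is also sound, and is in fact more explicit than the paper, which obtains existence and uniqueness only implicitly through invertibility of the system matrix. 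The trade-off: the paper's M-matrix argument is shorter given the cited positivity lemma, while yours is self-contained and makes transparent exactly where mass lumping and the one-dimensional $\mathbb{P}^1$ structure (equivalently, the automatic M-matrix property of $D$ in 1D) enter; in higher dimensions both arguments would require the same mesh conditions, so neither buys extra generality.
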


\begin{proof}
	Recall that $x_{J_{n+1}} = \ell_h^{n+1}$, and for $r = n,\,n+1$, define the vector
	\begin{align*}
	\boldsymbol{c}_{h}^{r} &:= [c_h^{r}(x_0),\,c_h^{r}(x_1),\,\ldots,\, c_h^{r}(x_{J_{n+1}-1})].
	\end{align*}
	The vector $\boldsymbol{c}_{h}^{n+1}$ contains the discrete unknowns at $t_{n+1}$. Note that we do not need to compute the nodal value $\boldsymbol{c}_{h}^{n+1}(x_{J_{n+1}})$  at the discrete level  since Dirichlet boundary condition holds at $x_{J_{{n+1}}}$. The matrix equation corresponding to~\eqref{eqn:bform_ot} is 
	\begin{equation}
	\left(M + \delta \lambda D + Q\delta S\right){\bf c}_{h}^{n+1} = M{\bf c}_{h}^{n} - \delta {\bf b}_h,
	\label{eqn:ch_matrix}
	\end{equation}
	where ${\bf b}_h$ is $J_{n+1} \times 1$ vector with entries ${\bf b}_{h,i} = 0$ for $0 \leq i \leq J_{n+1}-2$ and ${\bf b}_{h,J_{n+1}-1} = -\lambda/h$. Here, $M$ is the $J_{n+1}\times J_{n+1}$ positive, diagonal, lumped mass matrix. The matrix $D$ is the stiffness matrix with all off--diagonal entries negative. The entries of the positive, diagonal, lumped mass matrix $S$ are as follows:
	\begin{equation}
	S_{ii} = \sum_{\mathcal{X}_j \subset \supp(\varphi_{i,h})} \dfrac{h\,\alpha_{j}^{n}}{2}  \left\langle \dfrac{(\Pi_h\varphi_{i,h})^2}{1 + \widehat{Q}_1 \left|\Pi_h c_h^{n}\right|}\right\rangle_{\mathcal{X}_j},\;\; 0 \leq i \leq J_{n+1}-1,
	\end{equation}
	where $\{\varphi_{i,h}\}_{\{0\le i\le J_{n+1} - 1\}}$ is the canonical nodal basis of $\mathcal{S}_{h,0}^{n+1}$. The symbol $\langle f \rangle_{\mathcal{X}_j}$ denotes the average of $f$ over the cell $\mathcal{X}_j$. An application of Lemma~\ref{appen_id.g} shows that the discrete operator $\epsilon_{h,\delta} := (\mathbb{I}_{J_{n+1}} + \delta M^{-1}(\lambda D + QS) )^{-1}$ is positive. A use of the facts $\alpha_{h,\delta}(t_{n+1},\cdot) > 0,\,{\bf c}_{h}^{n} \geq 0$, and ${\bf b}_{h} \leq 0$ yields ${\bf c}_{h}^{n
		+1} \geq 0$.
	Next, we obtain the upper bound for $\boldsymbol{c}_h^{n+1}$. For $r=n,\,n+1$, define
	\begin{align*}
	\widehat{{\bf c}}_{h}^{r} &:= [c_h^{r}(x_0) -1 ,\,c_h^{r}(x_1) -1,\,\ldots,\, c_h^{r}(x_{J_{n+1}-1}) - 1].
	\end{align*}
It is easy to observe that $	(M + \delta\lambda D + Q\delta S)\widehat{\bf c}_{h}^{n+1} = M\widehat{\bf c}_{h}^{n} - \delta \widehat{\bf b}_h,$
	where $\widehat{\bf b}_h$ is the vector of nonnegative entries
	\begin{equation}
	\widehat{\bf b}_{h,i} = \sum_{X_j \subset \supp(\varphi_{i,h})} \dfrac{Q\,\alpha_{j}^{n}\,h}{2}  \left\langle \dfrac{\Pi_h \varphi_{i,h}}{1 + \widehat{Q}|\Pi_h^n c_h^n|} \right\rangle_{\mathcal{X}_j},\;\; 0 \leq j \leq J_{n+1}-1.
	\end{equation}
	Then, the same reasoning is used to obtain the positivity and Lemma~\ref{appen_id.g} imply ${\bf c}_h^{n+1}-1\le 0$.
\end{proof}
\subsection{Compactness results}
\label{sec:compactness_est}

The next goal is to establish necessary compactness properties for the iterates, which enables us to extract a convergent subsequence of discrete solutions, whose limit is a threshold solution. We list the main steps involved in this section. We establish
\begin{enumerate}[label= (CR.\arabic*),ref=(CR.\arabic*),leftmargin=\widthof{(CR.4)}+3\labelsep]
		\setlength\itemsep{-0.1em}
    \item\label{cr.1}
    a uniform $L^2(0,T_{\ast};H^1(0,\ell_m))$ estimate for the family $\{c_{h,\delta}\}_{h,\delta}$. 
    \item\label{cr.2} a uniform spatial BV estimate for the family $\{\alpha_{h,\delta}\}_{h,\delta}$.
    \item\label{cr.3} a uniform temporal BV estimate for the family $\{\alpha_{h,\delta}\}_{h,\delta}$.
    \item\label{cr.4}  a uniform $L^2(0,T_{\ast};H^1(0,\ell_m))$ estimate for the family $\{\widehat{u}_{h,\delta}\}_{h,\delta}$. 
    \item\label{cr.5} a uniform BV estimate for the family $\{\ell_{h,\delta}\}_{h,\delta}$.
    \item\label{cr.6}  that the family $\{\Pi_{h,\delta}c_{h,\delta}\}_{h,\delta}$ is relatively compact in $L^2(\mathscr{D}_{T_{\ast}})$.
    \item\label{cr.7}   Theorem~\ref{thm:main_thm_1}  with the help of \ref{cr.1}--\ref{cr.6}
\end{enumerate}

In this sequel, $\mathscr{C}_1$ denotes a generic constant that depends $\alpha_{0},\,c_{0},\,a_\ast$, $a^\ast$, $\ell_m$, $T_{\ast}$, and the model parameters.  Let us start with a preliminary lemma, the proof of which is an easy consequence of local Taylor expansions.
\begin{lemma}{\emph{\bf \cite[Section 8.4]{droniou2018gradient}}}
\label{lemma:pihf_f}
For any $w \in H^1(0,{{\ell_m}})$, the following estimates hold: \vspace{-0.4cm}
\begin{align}
\label{eqn:ml_interp}
    ||w - \Pi_h w_h ||_{0,(0,{{\ell_m}})} &\leq \dfrac{h}{2} ||\partial_x w||_{0,(0,{{\ell_m}})} \textrm{ and }\\
    ||\Pi_h w_h||_{0,(0,{{\ell_m}})} &\leq \dfrac{h}{2} ||\partial_x w||_{0,(0,{{\ell_m}})} + ||w||_{0,(0,{{\ell_m}})}.
    \label{eqn:ml_overbd}
\end{align}
\end{lemma}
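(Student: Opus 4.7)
The plan is to localise both estimates to the node-centred intervals $\widetilde{\mathcal{X}}_j$, on which $\Pi_h w$ takes the constant nodal value $w(x_j)$. For any $x \in \widetilde{\mathcal{X}}_j$ the fundamental theorem of calculus gives $w(x) - w(x_j) = \int_{x_j}^{x} \partial_x w(y)\,\mathrm{d}y$, and Cauchy--Schwarz then yields the pointwise bound
\begin{equation*}
|w(x) - w(x_j)|^2 \;\le\; |x - x_j| \int_{\widetilde{\mathcal{X}}_j} |\partial_x w(y)|^2\,\mathrm{d}y.
\end{equation*}
Integrating over $\widetilde{\mathcal{X}}_j$ and using $\int_{\widetilde{\mathcal{X}}_j} |x - x_j|\,\mathrm{d}x = h^2/4$ for an interior node (with a strictly smaller value for the two boundary half-intervals $\widetilde{\mathcal{X}}_0$ and $\widetilde{\mathcal{X}}_J$, which have width $h/2$), I obtain the cell-wise bound
\begin{equation*}
\int_{\widetilde{\mathcal{X}}_j} |w - \Pi_h w|^2\,\mathrm{d}x \;\le\; \frac{h^2}{4} \int_{\widetilde{\mathcal{X}}_j} |\partial_x w|^2\,\mathrm{d}x.
\end{equation*}

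Summing this over $j = 0,\ldots,J$ and extracting the square root gives estimate~\eqref{eqn:ml_interp}. Estimate~\eqref{eqn:ml_overbd} then follows immediately by the triangle inequality applied to $\Pi_h w = (\Pi_h w - w) + w$, combined with \eqref{eqn:ml_interp}.

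There is essentially no obstacle: the only subtlety worth checking is the treatment of the boundary half-intervals, and since $w(x_0)$ and $w(x_J)$ still lie on the boundary of their associated intervals, the same Cauchy--Schwarz argument applies with an even more favourable constant. The continuous embedding $H^1(0,\ell_m) \hookrightarrow \mathscr{C}^0([0,\ell_m])$ in one dimension ensures that the nodal values used in $\Pi_h w$ are well defined.
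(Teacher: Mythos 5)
Your proof is correct and is exactly the ``local Taylor expansion'' argument the paper alludes to (it defers the details to \cite[Section 8.4]{droniou2018gradient}): a cell-wise fundamental-theorem-of-calculus plus Cauchy--Schwarz estimate on each node-centred interval $\widetilde{\mathcal{X}}_j$, where $\Pi_h w$ equals the nodal value, followed by summation; the constant $h/2$ and the treatment of the boundary half-intervals check out, and \eqref{eqn:ml_overbd} is indeed an immediate triangle-inequality consequence of \eqref{eqn:ml_interp}.
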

\noindent We now prove an $L^2(0,T_{\ast};H^1(0,{{\ell_m}}))$ stability estimate for $c_{h,\delta}$.
\begin{proposition}[Step~\ref{cr.1}]
\label{prop:chd_energy}
It holds $||c_{h,\delta}||_{L^2(0,T_{\ast};H^1(0,{{\ell_m}}))} \le \mathscr{C}_1$.
\end{proposition}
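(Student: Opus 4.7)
The strategy is to derive a discrete energy inequality by testing the discrete equation \eqref{eqn:bform_ot} with $v_h^n := \widetilde{c}_h^n - 1 \in \mathcal{S}_{h,0}^n$ (it vanishes at $\ell_h^n$ by the Dirichlet condition). Since $\Pi_h$ is linear and preserves constants, $\Pi_h v_h^n = \Pi_h \widetilde{c}_h^n - 1$. Applying the identity $2a(a-b) = a^2 - b^2 + (a-b)^2$ with $a = \Pi_h\widetilde{c}_h^n - 1$ and $b = \Pi_h c_h^{n-1} - 1$ to the time--difference term, and dropping the nonnegative $(a-b)^2$ contribution, yields
\[
2\bigl(\Pi_h \widetilde{c}_h^n - \Pi_h c_h^{n-1},\,\Pi_h \widetilde{c}_h^n - 1\bigr)_{\Omega_h^n} \ge \|\Pi_h \widetilde{c}_h^n - 1\|_{0,\Omega_h^n}^2 - \|\Pi_h c_h^{n-1} - 1\|_{0,\Omega_h^n}^2,
\]
while the diffusive term contributes the coercive quantity $\delta\lambda\|\partial_x \widetilde{c}_h^n\|_{0,\Omega_h^n}^2$. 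The reaction term is moved to the right-hand side and bounded using $0 \le \Pi_h c_h^r \le 1$ from Lemma~\ref{lemma:dot_max} together with $\alpha_h^n \le a^\ast$; since $\Pi_h \widetilde{c}_h^n(1 - \Pi_h \widetilde{c}_h^n) \le 1/4$, the reaction contribution is controlled uniformly by $Q\delta a^\ast \ell_m/2$.

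The key observation for reassembling the estimate is that the extension rule $c_h^r \equiv 1$ on $(0,\ell_m) \setminus \overline{\Omega_h^r}$ forces $\Pi_h c_h^r - 1$ to vanish at every node $x_j$ with $j \ge J_r$, so $\Pi_h c_h^r - 1$ is supported in $\overline{\Omega_h^r}$. Consequently
\[
\|\Pi_h c_h^{n-1} - 1\|_{0,\Omega_h^n}^2 \le \|\Pi_h c_h^{n-1} - 1\|_{0,(0,\ell_m)}^2 = \|\Pi_h c_h^{n-1} - 1\|_{0,\Omega_h^{n-1}}^2,
\]
regardless of whether $\Omega_h^n$ grows, stays, or shrinks across the step. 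Setting $E_r := \|\Pi_h c_h^r - 1\|_{0,(0,\ell_m)}^2$ the inequality becomes the telescoping relation $E_n - E_{n-1} + 2\delta\lambda \|\partial_x \widetilde{c}_h^n\|_{0,\Omega_h^n}^2 \le Q\delta a^\ast \ell_m/2$, which upon summation for $n = 1,\dots,N_\ast$ gives
\[
E_{N_\ast} + 2\lambda \sum_{n=1}^{N_\ast}\delta \|\partial_x \widetilde{c}_h^n\|_{0,\Omega_h^n}^2 \le E_0 + \tfrac{1}{2}Q a^\ast \ell_m T_\ast,
\]
where $E_0$ is bounded by $\ell_m$ thanks to $0 \le c_0 \le 1$.

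Finally, to conclude, I translate this into the desired norm. Since $\partial_x c_{h,\delta}(t,\cdot) \equiv 0$ on $(0,\ell_m)\setminus\overline{\Omega_h^n}$ for $t \in \mathcal{T}_n$, the gradient sum equals $\|\partial_x c_{h,\delta}\|_{L^2(\mathscr{D}_{T_\ast})}^2$. The $L^2$ norm is bounded directly from $0 \le c_h^n \le 1$ on all of $(0,\ell_m)$, which gives $\|c_{h,\delta}\|_{L^2(\mathscr{D}_{T_\ast})}^2 \le T_\ast \ell_m$. Combining the two bounds yields the claim.

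The main obstacle is handling the time-varying computational domain $\Omega_h^n$ in the telescoping argument; this is resolved precisely by exploiting the extension-by-one convention built into \ref{ds.d}, which guarantees that $\Pi_h c_h^r - 1$ is supported in $\overline{\Omega_h^r}$ and hence that the two successive norms in the discrete time derivative can be compared cleanly.
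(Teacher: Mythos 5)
Your proof is correct and follows essentially the same route as the paper: test \eqref{eqn:bform_ot} with the Dirichlet--lifted function $\widetilde{c}_h^n - 1$, telescope the resulting discrete energy identity across the moving domains $\Omega_h^n$, and use the already--established $L^\infty$ bounds on $\alpha_h^n$ and $c_h^n$ to control the reaction term. The only cosmetic differences are that you compare successive norms by observing that $\Pi_h c_h^{n-1}-1$ is supported in $\overline{\Omega_h^{n-1}}$ (the paper instead argues the two cases $\ell_h^n \le \ell_h^{n-1}$ and $\ell_h^n = \ell_h^{n-1}+h$ separately, reaching the same inequality), and that you bound the reaction contribution directly via $\Pi_h \widetilde{c}_h^n(1-\Pi_h \widetilde{c}_h^n)\le 1/4$ rather than through Young's and Poincar\'e's inequalities.
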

\begin{proof}

Define the continuous function $\widehat{c}_h^n$ on $(0,\ell_m)$ by $\widehat{c}_h^n :=\widetilde{c}_h^n - 1$ in $\Omega_h^n$, and $\widehat{c}_h^n := 0$ on $(0,\ell_m)\setminus \Omega_h^n$. An application of Cauchy--Schwarz inequality and \eqref{eqn:split_id_3} yields
\begin{align}
\label{eqn:am_gm_f}
2 (\Pi_h \widehat{c}_h^{n-1},\Pi_h \widehat{c}_h^n)_{\Omega^n_h} &\leq ||\Pi_h \widehat{c}_h^{n-1}||_{0,\Omega_h^n}^2 + ||\Pi_h \widehat{c}_h^n||_{0,\Omega_h^n}^2.
\end{align}
If $\ell_h^n \leq \ell_h^{n-1}$, then $||\Pi_h \widehat{c}_h^{n-1}||_{0,\Omega_h^n}^2 \leq ||\Pi_h \widehat{c}_h^{n-1}||_{0,\Omega_h^{n-1}}^2$ since $\Omega_{h}^n \subseteq \Omega_{h}^{n-1}$. If $\ell_h^n = \ell_h^{n-1} + h$,  then $\Pi_h \widehat{c}_{h}^{n-1} = 0$ on $\Omega_h^n \setminus \Omega_h^{n-1}$, and $||\Pi_h \widehat{c}_h^{n-1}||_{0,\Omega_h^n}^2 = ||\Pi_h \widehat{c}_h^{n-1}||_{0,\Omega_h^{n-1}}^2$. Hence by~\eqref{eqn:am_gm_f} in any case
\begin{align}
2 (\Pi_h \widehat{c}_h^{n-1},\Pi \widehat{c}_h^n)_{\Omega^n_h} &\leq ||\Pi_h \widehat{c}_h^{n-1}||_{0,\Omega_h^{n-1}}^2 + ||\Pi_h \widehat{c}_h^n||_{0,\Omega_h^n}^2.
\label{eqn:am_gm_c}
\end{align}
Choose  $v_{h}^n = \widehat{c}_h^n\in \mathcal S^n_{h,0}$ as the test function in~\eqref{eqn:bform_ot} with a Dirichlet lift of $-1$, and use~\eqref{eqn:am_gm_c} and the observation that, since $\widehat{c}_h^n \le 0$ and $\alpha_h^n\ge 0$,  $- \frac{Q\alpha_h^n \Pi_h \widehat{c}_h^n}{1 + \widehat{Q}_1 \left|\Pi_h c_h^{n-1}\right|} \le -Q\alpha_h^n \Pi_h \widehat{c}_h^n,$
to obtain
\[
\dfrac{1}{2}|| \Pi_h \widehat{c}_h^n ||_{0,\Omega_{h}^n}^2 -  \dfrac{1}{2}|| \Pi_h \widehat{c}_h^{n-1}||_{0,\Omega_{h}^{n-1}}^2 + 
\delta \lambda || \partial_{x} \widehat{c}_h^n||_{0,\Omega_{h}^n}^2 \leq -Q\delta (\alpha^n_h,\Pi_h \widehat{c}_h^n)_{\Omega^n_h}. 
\]

\noindent A use of Young's and Poincar\'{e}'s inequalities together with \eqref{eqn:ml_overbd} and a summation on the index $n$ yield
\begin{align}
    \dfrac{1}{2}||\Pi_h \widehat{c}_h^n ||_{0,\Omega_{h}^n}^2 + \dfrac{\lambda \delta}{2} \sum_{r=0}^{n}|| \partial_{x}  \widehat{c}_h^r||_{0,\Omega_{h}^r}^2 \lesssim 1
    \label{eqn:h1c_omega}
\end{align}
 Since $\partial_x \widehat{c}_h^r=\partial_x c^r_{h}$ on $\Omega_h^r$ and $\partial_x c^r_{h}=0$ outside this set,~\eqref{eqn:h1c_omega} yields a bound on $\partial_x c_{h,\delta}$ in $L^2(\mathscr{D}_{T_*})$. We obtain the desired conclusion from the fact $c_{h,\delta}(t,\ell_m)=1$ for all $t\in (0,T_*)$ and a Poincar\'e inequality.
\end{proof}

Proposition~\ref{prop:chd_energy} is crucial in obtaining a bounded variation estimate for the piecewise constant function $\alpha_{h,\delta}$. The idea is then to use  Helly's selection theorem (see Theorem~\ref{appen_id.c}) to extract an almost everywhere convergent subsequence of functions out of the family of functions $\{\alpha_{h,\delta}\}_{h,\delta}$. Spatial and temporal BV estimates for $\alpha_{h,\delta}$ are derived separately in Propositions~\ref{prop:alpha_sbv} and~\ref{prop:alpha_tbv} for this purpose.

\begin{proposition}[Step~\ref{cr.2}]
\label{prop:alpha_sbv}
For $t \in (0,T_{\ast})$ it holds
\begin{equation}
    ||\alpha_{h,\delta}(t,\cdot)||_{BV(0,{{\ell_m}})} \le \mathscr{C}_{1}.
    \label{eqn:alpha_sbv}
\end{equation}
\end{proposition}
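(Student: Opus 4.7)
The $L^1$ part of the $BV$ norm is immediately bounded by $a^\ast\ell_m$ thanks to Proposition~\ref{prop:dalpha_max}, so it suffices to bound $TV_n := \sum_{j=0}^{J-1}|\alpha_{j+1}^n - \alpha_j^n|$ uniformly for all $t_n\le T_\ast$. The plan is to derive a discrete Gronwall-type recurrence of the form $TV_{n+1} \le (1+C\delta)\,TV_n + C\delta + C\delta\,TV(c_h^n)$ from the scheme, and to close the argument using the $L^2(0,T_\ast;H^1(0,\ell_m))$-bound on $c_{h,\delta}$ from Proposition~\ref{prop:chd_energy}.

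\textbf{Inverting the implicit decay.} Working from the form~\eqref{eqn:n1_sten_2}, I would write $\alpha_j^{n+1} + \delta(\alpha_j^{n+1}-\ath)^+ d_j^n = A_j^n$, where $A_j^n$ collects the convex combination $\mathscr L(\alpha_{j-1}^n,\alpha_j^n,\alpha_{j+1}^n)$, the explicit growth $\delta(\alpha_j^n-\ath)^+(1-\alpha_j^n)b_j^n$, and the term $-\delta\alpha_j^n\partial_x u_{h|_{\mathcal X_j}}^n$. Subtracting the equations at $j+1$ and $j$, using that $\xi\mapsto(\xi-\ath)^+$ is nondecreasing and $1$-Lipschitz, and factoring out the resulting multiplier $1+\delta d_j^n\beta_j\ge 1$ (with $\beta_j\in[0,1]$), I obtain
\[
|\alpha_{j+1}^{n+1}-\alpha_j^{n+1}| \le |A_{j+1}^n - A_j^n| + \delta a^\ast |d_{j+1}^n - d_j^n|.
\]
Summed in $j$ and using the Lipschitz regularity of $c\mapsto (s_2+s_3 c)/(1+s_4 c)$ on $[0,1]$, the last right-hand term contributes $C\delta\,TV(c_h^n) = C\delta\,\|\partial_x c_h^n\|_{L^1(0,\ell_m)}$.

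\textbf{Bounding $\sum_j|A_{j+1}^n-A_j^n|$.} Under the CFL condition~\eqref{eqn:cfl_condition} and the $L^\infty$ bound~\eqref{eqn:u_bound}, $\mathscr L$ is a genuine convex combination; a discrete rearrangement of $\sum_j|\mathscr L_{j+1}-\mathscr L_j|$ yields a bound by $TV_n$ plus a $C\delta$-multiple of the total variation of the piecewise constant product $\alpha_{h,\delta}(t_n,\cdot)\partial_x u_{h,\delta}(t_n,\cdot)$. By Lemma~\ref{lemma:dvel_bv} and the Lipschitz character of $\mathscr H$ on $[a_\ast,a^\ast]$, this last total variation is bounded by $C + C\,TV_n$. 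The explicit growth term contributes another $C\delta(TV_n + TV(c_h^n))$ after using the Lipschitz dependence of $\alpha\mapsto(\alpha-\ath)^+(1-\alpha)$ and of $c\mapsto (1+s_1)c/(1+s_1 c)$, and the $-\delta\alpha_j^n\partial_x u_h^n|_{\mathcal X_j}$ term contributes $C\delta$ again by Lemma~\ref{lemma:dvel_bv}. Assembling these estimates yields the announced recurrence.

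\textbf{Conclusion and main obstacle.} A discrete Gronwall argument then gives
\[
TV_n \le e^{CT_\ast}\Bigl(TV_0 + CT_\ast + C\delta\sum_{k=0}^{n-1}TV(c_h^k)\Bigr),
\]
and Cauchy--Schwarz together with Proposition~\ref{prop:chd_energy} bound the last sum by $\sqrt{T_\ast\ell_m}\,\|\partial_x c_{h,\delta}\|_{L^2(\mathscr D_{T_\ast})}\le \mathscr C_1$. Since $TV_0\le |\alpha_0|_{BV(0,\ell_m)}$ by the averaging definition of $\alpha_h^0$, this yields~\eqref{eqn:alpha_sbv}. The main obstacle is the apparent circularity whereby the advection contribution re-introduces $TV_n$ (through the BV of the product $\alpha_{h,\delta}\partial_x u_{h,\delta}$), together with the need to handle the implicit $c$-dependent decay simultaneously; both are reconciled by the monotone-Lipschitz trick and by absorbing the $TV_n$ term into the Gronwall loop, the $L^2H^1$ control on $c_{h,\delta}$ from Proposition~\ref{prop:chd_energy} being the essential input.
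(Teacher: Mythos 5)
Your proposal is correct and follows essentially the same route as the paper: difference the scheme in $j$, exploit the convexity of the upwind combination under the CFL condition, control the divergence term $\alpha_{h,\delta}\partial_x u_{h,\delta}$ in $BV$ through Lemma~\ref{lemma:dvel_bv} plus the Lipschitz continuity of $\mathscr{H}$ (which re-injects $TV_n$ into the Gronwall loop), reduce the variations of $b_j^n,d_j^n$ to that of $c_h^n$, and close with Cauchy--Schwarz and Proposition~\ref{prop:chd_energy}. The only (harmless) deviation is your treatment of the implicit death term by factoring out the multiplier $1+\delta\beta_j d_{j+1}^n\ge 1$ via monotonicity of $(\cdot-\ath)^+$, where the paper instead keeps a factor $(1-\delta s_2)$ on the left-hand side and absorbs it using $\delta s_2<1-\rho$; your variant is marginally cleaner but not a different argument.
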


\begin{proof}
Let $j\in\{1,\ldots,J-1\}$ and subtract~\eqref{eqn:n1_sten_2} for $\alpha_{j-1}$ from~\eqref{eqn:n1_sten_2} for $\alpha_{j}$. This yields $T_0 = T_1 +\delta T_2 - \delta T_3$, where 
\begin{subequations}
\begin{align}
    T_0 &= (\ats{n+1}{j}  - \ats{n+1}{j-1}) +  \delta((\ats{n+1}{j} - \ath)^{+} d_{j}^{n} - (\ats{n+1}{j-1} - \ath)^{+}d_{j-1}^{n}),  \\
    T_1 &=  \mathscr{L}\left(\ats{n}{j-1},\ats{n}{j},\ats{n}{j+1}\right)    - \mathscr{L}\left(\ats{n}{j-2},\ats{n}{j-1},\ats{n}{j}\right), \\
  T_2 &= (\ats{n}{j} - \ath)^{+}(1-\ats{n}{j})b_{j}^n - (\ats{n}{j-1} - \ath)^{+}(1-\ats{n}{j-1})b_{j-1}^n, \text{ and } \\
  T_3 &= \ats{n}{j}\partial_x u_{h\vert{\mathcal{X}j}}^n - \ats{n}{j-1}\partial_x u_{h\vert{\mathcal{X}_{j-1}}}^n.
\end{align}
The terms in $T_1$ can be grouped in the following way: 
\begin{align}
    T_1 
={}& (\ats{n}{j} - \ats{n}{j-1}) ( 1 - \dfrac{\delta}{h} u_{j}^{n-} - \dfrac{\delta}{h} u_{j}^{n+} ) + \dfrac{\delta}{h} u_{j+1}^{n-} (\ats{n}{j+1} - \ats{n}{j}) \nonumber \\
&+ \dfrac{\delta}{h} u_{j-1}^{n+} (\ats{n}{j-1} - \ats{n}{j-2}).
\label{eqn:t1_grouping}
\end{align}
Split the terms in $T_0$ and $T_2$ using~\eqref{eqn:split_id_1} in Appendix \ref{appen_B} to obtain
\begin{align}
   T_0 ={}& (\ats{n+1}{j} - \ats{n+1}{j-1}) + \delta ((\ats{n+1}{j} - \ath)^{+}  - (\ats{n+1}{j-1} - \ath)^{+} )\frac{ d_{j}^{n} + d_{j-1}^{n}}{2} \nonumber \\
   \label{eqn:t0_split}
   &+ \delta ((\ats{n+1}{j} - \ath)^{+}  + (\ats{n+1}{j-1} - \ath)^{+} )\frac{ d_{j}^{n} - d_{j-1}^{n}}{2}, \text{ and }\\
    T_2 ={}& ((\ats{n}{j} - \ath)^{+}(1-\ats{n}{j}) + (\ats{n}{j-1} - \ath)^{+}(1-\ats{n}{j-1}))\frac{b_{j}^n - b_{j-1}^n}{2} \\
    &+ ((\ats{n}{j} - \ath)^{+} - (\ats{n}{j-1} - \ath)^{+})(2 - \ats{n}{j} - \ats{n}{j-1})\frac{b_{j}^n + b_{j-1}^n}{4} \\
     \label{eqn:t2_split}
    &+ ((\ats{n}{j} - \ath)^{+} + (\ats{n}{j-1} - \ath)^{+})(\ats{n}{j-1} - \ats{n}{j})\frac{b_{j}^n + b_{j-1}^n}{4}. 
\end{align}
\end{subequations}
Substitute~\eqref{eqn:t1_grouping},~\eqref{eqn:t0_split}, and~\eqref{eqn:t2_split} in $T_0 = T_1 + \delta T_2 - \delta T_3$, use the facts that $0 \leq b_j^n \leq 1$, $0 \le d_j^n \le s_2,\,0 \leq \alpha_j^n \leq 1$, the CFL condition \eqref{eqn:cfl_condition} together with the bound \eqref{eqn:u_bound} on the velocity, the Lipschitz continuity of $x\mapsto (x - \ath)^{+}$, and group the terms appropriately to obtain 
\begin{multline}
   (1 - \delta s_2)|\ats{n+1}{j} - \ats{n+1}{j-1}| \leq{} |\ats{n}{j} - \ats{n}{j-1}| ( 1 - \dfrac{\delta}{h} u_{j}^{n-} - \dfrac{\delta}{h} u_{j}^{n+} ) + \dfrac{\delta}{h} u_{j+1}^{n-} |\ats{n}{j+1} - \ats{n}{j}| \\
   + \dfrac{\delta}{h} u_{j-1}^{n+} |\ats{n}{j-2} - \ats{n}{j-1}| + \delta |d_{j}^{n} -  d_{j-1}^{n}| + \delta|b_{j}^n - b_{j-1}^n| \\
   + 2\delta|\ats{n}{j} - \ats{n}{j-1}| + \delta| \ats{n}{j}\partial_x u_{h\vert\mathcal{X}_{j}}^n - \ats{n}{j-1}\partial_x u_{h\vert\mathcal{X}_{j-1}}^n|.
      \label{eqn:space_bv_final}
\end{multline}

\noindent Sum the expression~\eqref{eqn:space_bv_final} from $j=1$ to $j=J$, and utilize $u_{0}^n = 0$, $u_{J}^n = 0$, $u_{J+1}^n = 0$ and $0 \leq (\delta/h) |\alpha_1^n - \alpha_0^n|u_{0}^{n-1}$ to obtain 
\begin{multline}
 (1 - \delta s_2)\sum_{j=1}^{J} |\ats{n+1}{j} - \ats{n+1}{j-1}| \leq{} (1 + 2\delta ) \sum_{j=1}^{J}|\ats{n}{j} - \ats{n}{j-1}| + \delta \sum_{j=1}^{J} |d_{j}^{n} -  d_{j-1}^{n}| \\
 + \delta \sum_{j=1}^{J} |b_{j}^{n} -  b_{j-1}^{n}| 
  +\,\delta\sum_{j=1}^{J}| \ats{n}{j}\partial_x u^n_{h|\mathcal{X}_j} - \ats{n}{j-1}\partial_x u^n_{h|\mathcal{X}_{j-1}}|.
  \label{eqn:sbv_sum}
\end{multline}
Further note that
\begin{align*}
     ||\mu \alpha_{h,\delta}(t_n,\cdot) \partial_{x}u_{h,\delta}(t_n,\cdot)||_{BV(0,{{\ell_m}})} 
\leq{}& ||\mu \alpha_{h,\delta}(t_n,\cdot) \partial_{x}u_{h,\delta}(t_n,\cdot) -  \mathscr{H}(\alpha_{h,\delta}(t_n,\cdot))||_{BV(0,{{\ell_m}})} \\ &+ ||\mathscr{H}(\alpha_{h,\delta}(t_n,\cdot))||_{BV(0,{{\ell_m}})}.
\end{align*}
A use of~\eqref{eqn:vel_dbv} and the fact that $\mathscr{H}$ is continuous and piecewise differentiable yield
\begin{equation}
\label{eqn:bv_h_u}
||\mu \alpha_{h,\delta}(t_n,\cdot) \partial_{x}u_{h,\delta}(t_n,\cdot)||_{BV(0,{{\ell_m}})} \lesssim 1 +  ||\alpha_{h,\delta}(t_n,\cdot)||_{BV(0,{{\ell_m}})}.
\end{equation}
\noindent The CFL condition \eqref{eqn:cfl_condition} yields $1-\delta s_2\ge \rho$. Moreover, there exists a $\eta>0$ such that, for all admissible $\delta$, $(1+2\delta)/(1-s_2\delta)\le 1+\eta\delta$. Hence~\eqref{eqn:sbv_sum} and~\eqref{eqn:bv_h_u} imply
\begin{multline}
   ||\alpha_{h,\delta}(t_{n+1},\cdot)||_{BV(0,{{\ell_m}})} \leq (1 +  \eta\delta ) ||\alpha_{h,\delta}(t_{n},\cdot)||_{BV(0,{{\ell_m}})} + \delta \mathscr{C}_1 (\rho\mu)^{-1}  \\
 +  \rho^{-1} \delta (||d_{h,\delta}(t_n,\cdot)||_{BV(0,{{\ell_m}})} +  ||b_{h,\delta}(t_n,\cdot)||_{BV(0,{{\ell_m}})}).
\end{multline}
 Induction on the right hand side of the above expression yields 
\begin{align*}
   ||\alpha_{h,\delta}(t_{n+1},\cdot)||_{BV(0,{{\ell_m}})} \leq{} \exp\left(T_{\ast}\eta\right) (||\alpha_{h,\delta}(0,\cdot)||_{BV(0,{{\ell_m}})}  + \mathscr{C}_1(\rho \mu)^{-1}T_{\ast})  \\
    + \rho^{-1} \exp\left( T_{\ast}\eta\right)\int_{0}^{T_{\ast}}  \left( |b_{h,\delta}(t,\cdot)|_{BV(0,{{\ell_m}})} + |d_{h,\delta}(t,\cdot)|_{BV(0,{{\ell_m}})}\right)\,\mathrm{d}t,
\end{align*}
and since $d_{h,\delta}$ and $b_{h,\delta}$ are smooth functions of $c_{h,\delta}$ (see~\ref{ds.d} in Discrete scheme~\ref{defn:semi_disc_soln}), the estimates for $c_{h,\delta}$ from Proposition~\ref{prop:chd_energy} conclude the proof. 
\end{proof}

\begin{proposition}[Step~\ref{cr.3}]
\label{prop:alpha_tbv}
The function $\alpha_{h,\delta}$ satisfies the upper bound 
\begin{equation}
    \int_{0}^{{{\ell_m}}} ||\alpha_{h,\delta}(\cdot,x)||_{BV(0,T_{\ast})}\,\mathrm{d}x \le \mathscr{C}_1.
    \label{eqn:alpha_tbv}
\end{equation}
\end{proposition}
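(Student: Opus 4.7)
The plan is to bound $\sum_n\sum_j h|\alpha_j^{n+1}-\alpha_j^n|$ directly from the upwind scheme \eqref{eqn:upwind}, which (after multiplication by $h$) gives
\[
h|\alpha_j^{n+1}-\alpha_j^n| \leq \delta \bigl|u^{n+}_{j+1}\alpha_j^n - u^{n-}_{j+1}\alpha_{j+1}^n - u^{n+}_j\alpha_{j-1}^n + u^{n-}_j\alpha_j^n\bigr| + \delta h\bigl|R_j^{n,n+1}\bigr|,
\]
where $R_j^{n,n+1}=(\alpha_j^n-\ath)^+(1-\alpha_j^n)b_j^n - (\alpha_j^{n+1}-\ath)^+d_j^n$. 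The reaction term is trivially bounded (by $1+s_2$) using $0\le \alpha\le 1$ and $|b|,|d|\lesssim 1$, so after summing over $j$ and $n$ it contributes at most $T_\ast\ell_m(1+s_2)\le\mathscr{C}_1$.

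The interesting term is the flux difference. I will split the product using the identity $ab-cd = c(b-d) + b(a-c)$ applied twice to get
\[
\begin{aligned}
\bigl|u^{n+}_{j+1}\alpha_j^n - u^{n+}_j\alpha_{j-1}^n\bigr| &\le \|u_{h,\delta}(t_n,\cdot)\|_{L^\infty}|\alpha_j^n-\alpha_{j-1}^n| + \alpha_{j-1}^n|u^{n+}_{j+1}-u^{n+}_j|, \\
\bigl|u^{n-}_{j+1}\alpha_{j+1}^n - u^{n-}_j\alpha_j^n\bigr| &\le \|u_{h,\delta}(t_n,\cdot)\|_{L^\infty}|\alpha_{j+1}^n-\alpha_j^n| + \alpha_j^n|u^{n-}_{j+1}-u^{n-}_j|.
\end{aligned}
\]
Summing over $j$ and using $0\le\alpha\le 1$ together with the $1$-Lipschitz property of $x\mapsto x^{\pm}$ yields
\[
\sum_j\bigl|u^{n+}_{j+1}\alpha_j^n - u^{n-}_{j+1}\alpha_{j+1}^n - u^{n+}_j\alpha_{j-1}^n + u^{n-}_j\alpha_j^n\bigr| \le 2\|u_{h,\delta}(t_n,\cdot)\|_{L^\infty}\,|\alpha_{h,\delta}(t_n,\cdot)|_{BV(0,\ell_m)} + 2\,|u_{h,\delta}(t_n,\cdot)|_{BV(0,\ell_m)}.
\]

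To finish I will sum over $n=0,\ldots,N_\ast-1$ (using $\delta N_\ast\le T_\ast$). The first piece is controlled by \eqref{eqn:u_bound} and Proposition~\ref{prop:alpha_sbv}: $\delta\sum_n\|u_{h,\delta}(t_n,\cdot)\|_{L^\infty}|\alpha_{h,\delta}(t_n,\cdot)|_{BV(0,\ell_m)}\le T_\ast\mathscr{C}_1$. The second piece requires a uniform bound on $|u_{h,\delta}(t_n,\cdot)|_{BV(0,\ell_m)}$: since $u_{h,\delta}(t_n,\cdot)$ is piecewise affine on $\Omega_h^n$ and zero on $(0,\ell_m)\setminus\overline{\Omega_h^n}$, its total variation equals $\int_0^{\ell_m}|\partial_x u_{h,\delta}(t_n,\cdot)|\,\mathrm{d}x + |u_{h,\delta}(t_n,\ell_h^n)|$ (accounting for the possible jump at $x=\ell_h^n$). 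Cauchy--Schwarz together with \eqref{eqn:vel_d_ene_1} and the lower bound $\alpha_{h,\delta}(t_n,\cdot)\ge a_\ast$ on $\Omega_h^n$ bounds the integral, while \eqref{eqn:u_bound} bounds the jump term; hence $|u_{h,\delta}(t_n,\cdot)|_{BV(0,\ell_m)}\lesssim 1$ uniformly in $n$ and $h,\delta$. Multiplying by $\delta$ and summing over $n$ produces a factor $T_\ast$, and collecting all estimates delivers \eqref{eqn:alpha_tbv}.

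The main obstacle is the second piece: one must notice that although $u_{h,\delta}(t_n,\cdot)$ is not continuous on $(0,\ell_m)$ (it jumps at $x=\ell_h^n$), the jump is controlled by the $L^\infty$ bound \eqref{eqn:u_bound}, so the BV of $u_{h,\delta}$ is still uniformly bounded. Everything else is routine product-splitting and application of the previously established estimates.
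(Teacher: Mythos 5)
Your proposal is correct, but it handles the two delicate terms by a route that is genuinely different from the paper's. For the flux difference, the paper first rewrites the upwind stencil (as in \eqref{eqn:n1_sten}) so that it splits into upwinded differences $u^{n-}_{j+1}(\ats{n}{j+1}-\ats{n}{j})$, $u^{n+}_{j}(\ats{n}{j-1}-\ats{n}{j})$ plus the discrete divergence $\ats{n}{j}(u^n_{j+1}-u^n_j)=h\,\ats{n}{j}\partial_x u^n_{h|\mathcal{X}_j}$; the first two are absorbed by $\|u_{h,\delta}\|_{L^\infty}$ times the spatial BV seminorm of $\alpha_{h,\delta}(t_n,\cdot)$ from Proposition~\ref{prop:alpha_sbv}, and the divergence term is controlled pointwise by the $L^\infty$ bound \eqref{eqn:up_bound} on $\mu\alpha_{h,\delta}\partial_x u_{h,\delta}$ established in Lemma~\ref{lemma:dvel_bv}. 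You instead split the products via \eqref{eqn:split_id_2} and reduce the velocity contribution to $\sum_j|u^n_{j+1}-u^n_j|$, i.e.\ the total variation of $u^n_h$, which you bound by Cauchy--Schwarz from the energy estimate \eqref{eqn:vel_d_ene_1} together with $\alpha_{h,\delta}(t_n,\cdot)\ge a_\ast$ on $\Omega_h^n$, plus the jump at $x=\ell_h^n$ controlled by \eqref{eqn:u_bound}; your observation that this jump must be accounted for (and is harmless) is exactly the right point to make. This buys you independence from the $L^\infty$ estimate on $\alpha\partial_x u$ (only $L^2$ control of $\partial_x u$ is needed), at the cost of an $L^1$-in-space rather than pointwise control of the divergence contribution -- which is all this proposition requires. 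For the reaction term you also simplify: by bounding the implicit death term $(\ats{n+1}{j}-\ath)^+d_j^n$ crudely by $s_2$ (legitimate since $\ats{n+1}{j}\le a^\ast<1$ by Proposition~\ref{prop:dalpha_max}) you avoid the paper's splitting via \eqref{eqn:split_id_1} and the resulting $(1-\delta(1+s_2)/2)$ prefactor that must be kept positive through the time-step restriction in \eqref{eqn:cfl_condition}. One cosmetic remark: \eqref{eqn:alpha_tbv} is stated for the BV \emph{norm}, so you should add the trivial observation that $\int_0^{\ell_m}\|\alpha_{h,\delta}(\cdot,x)\|_{L^1(0,T_\ast)}\,\dx\le \ell_m T_\ast$ since $0\le\alpha_{h,\delta}\le 1$; the paper's proof leaves this implicit as well.
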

\begin{proof}
Rearrange the terms~\eqref{eqn:n1_sten} and appropriately group using~\eqref{eqn:split_id_1} to obtain
\begin{align}
  \ats{n+1}{j} - \ats{n}{j} ={}& \delta (\ats{n}{j} - \ath)^{+} (1 - \ats{n}{j})\brate{n}{j} - \delta (\ats{n+1}{j}- \ath)^{+} \drate{n}{j} + \dfrac{\delta}{h} \um{n}{j+1} (\ats{n}{j+1} - \ats{n}{j}) \nonumber \\
   &+  \dfrac{\delta}{h} \up{n}{j} (\ats{n}{j-1} - \ats{n}{j}) - \dfrac{\delta}{h} \ats{n}{j}(u_{j+1}^n - u_{j}^n) \nonumber \\
   ={}& \delta  ((\ats{n}{j} - \ath)^{+} + (\ats{n+1}{j} - \ath)^{+}) \dfrac{(1-\ats{n}{j})\brate{n}{j} - \drate{n}{j}}{2} \nonumber \\
   &+ \delta((\ats{n}{j} - \ath)^{+} - (\ats{n+1}{j} - \ath)^{+}) \dfrac{(1-\ats{n}{j})\brate{n}{j} + \drate{n}{j}}{2} \nonumber \\
   &+ \dfrac{\delta}{h} \um{n}{j+1} (\ats{n}{j+1} - \ats{n}{j}) 
   + \dfrac{\delta}{h} \up{n}{j} (\ats{n}{j-1} - \ats{n}{j}) - \dfrac{\delta}{h} \ats{n}{j} (u_{j+1}^n - u_{j}^n). \nonumber
\end{align}
Use the facts that $0 \leq b_{j}^n \leq 1$, $0 \leq d_{j}^n \leq s_2$, $0 \leq \alpha_{j}^n \leq 1$, $g(x) = (x - \ath)^{+}$ is a Lipschitz function with Lipschitz constant one, and group the terms appropriately to obtain, for $j = 1,\ldots,J-1$
\begin{align}
|\ats{n+1}{j} - \ats{n}{j}| \leq{}& 
   \delta  \left( 1 + s_2 + |\ats{n}{j} - \ats{n+1}{j}| \dfrac{1 + s_2}{2}\right) + \dfrac{\delta}{h}  ||u_{h,\delta}||_{L^{\infty}(\mathscr{D}_{T_{\ast}})}|\ats{n}{j+1} - \ats{n}{j}| \nonumber \\
   &+ \dfrac{\delta}{h} ||u_{h,\delta}||_{L^{\infty}(\mathscr{D}_{T_{\ast}})} |\ats{n}{j-1} - \ats{n}{j}| + \delta ||\alpha_{h,\delta}\partial_{x}u_{h,\delta}||_{L^{\infty}(\mathscr{D}_{T_{\ast}})}.
  \label{eqn:tbv_j}
\end{align}
Since $u_{0}^n = 0$, for $j=0$  the same estimate holds with $\alpha_{-1}^n := \alpha_0^n$. Multiply~\eqref{eqn:tbv_j} by $h$ and sum over $j = 0,\ldots,J-1$ and $n = 0,\ldots,N_{\ast}-1$ with $N_{\ast} = T_{\ast}/\delta$ to obtain
\begin{align*}
 \left(1 - \delta \dfrac{(1 + s_2)}{2} \right)\sum_{j=0}^{J-1} h \sum_{n=0}^{N_{\ast}-1} |\ats{n+1}{j} - \ats{n}{j}|
 \leq{}& T_{\ast}{{\ell_m}} (1 + s_2 + ||\alpha_{h,\delta}\partial_{x}u_{h,\delta}||_{L^{\infty}(\mathscr{D}_{T_{\ast}})}) \\ &+ 2 ||u_{h,\delta}||_{L^{\infty}(\mathscr{D}_{T_{\ast}})}\sum_{n=0}^{N_{\ast}-1} \delta \sum_{j=0}^{J-1}  |\ats{n}{j+1} - \ats{n}{j}|.
\end{align*}
A use of the estimates~\eqref{eqn:u_bound},~\eqref{eqn:up_bound},~\eqref{eqn:alpha_sbv}, and~\eqref{eqn:cfl_condition} concludes the proof. 
\end{proof}

The next result is a direct consequence of Lemma~\ref{lemma:dvel_exis}, Proposition~\ref{prop:dalpha_max} and~\eqref{eqn:u_bound}. 
\begin{proposition}[Step~\ref{cr.4}]
	\label{prop:uhd_compact}
	The family of functions $\{\widehat{u}_{h,\delta}\}_{h,\delta}$ is uniformly bounded in $L^2(0,T_{\ast};H^{1}(0,{{\ell_m}}))$.
\end{proposition}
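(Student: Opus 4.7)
The proposition is a direct consequence of the estimates already established, so the plan is straightforward: I would unpack the definition of $\widehat{u}_{h,\delta}$ and estimate its $L^2$ and $H^1$ contributions separately, using the uniform lower bound on $\alpha_{h,\delta}$ from Proposition~\ref{prop:dalpha_max} to convert the weighted $L^2$ bound in Lemma~\ref{lemma:dvel_exis} into an unweighted $L^2$ bound on $\partial_x u_{h,\delta}$.

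For the $L^2(\mathscr{D}_{T_\ast})$ bound on $\widehat{u}_{h,\delta}$ itself, I would observe that by construction in~\eqref{eqn:uhd_const} we have, for every $t\in(0,T_\ast)$,
\[
|\widehat{u}_{h,\delta}(t,x)| \leq \|u_{h,\delta}(t,\cdot)\|_{L^\infty(0,\ell_m)},
\]
and then invoke the uniform $L^\infty$ bound~\eqref{eqn:u_bound}, which holds time slice by time slice thanks to Proposition~\ref{prop:dalpha_max}. Integrating over $(0,T_\ast)\times(0,\ell_m)$ yields $\|\widehat{u}_{h,\delta}\|_{L^2(\mathscr{D}_{T_\ast})}\le \mathscr{C}_1$.

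For the gradient, the key point is that the constant extension on $(\ell_{h,\delta}(t),\ell_m)$ makes $\widehat{u}_{h,\delta}(t,\cdot)$ continuous (see Figure~\ref{fig:uhhatuh}), so its distributional derivative satisfies $\partial_x \widehat{u}_{h,\delta}(t,x)=\partial_x u_{h,\delta}(t,x)$ on $(0,\ell_{h,\delta}(t))$ and $\partial_x \widehat{u}_{h,\delta}(t,x)=0$ on $(\ell_{h,\delta}(t),\ell_m)$. Therefore
\[
\|\partial_x \widehat{u}_{h,\delta}(t,\cdot)\|_{0,(0,\ell_m)}^{2}
= \|\partial_x u_{h,\delta}(t,\cdot)\|_{0,\Omega_h^{n(t)}}^{2}
\leq \frac{1}{a_\ast}\,\big\|\sqrt{\alpha_{h,\delta}(t,\cdot)}\,\partial_x u_{h,\delta}(t,\cdot)\big\|_{0,\Omega_h^{n(t)}}^{2},
\]
where the last inequality uses $\alpha_{h,\delta}(t,\cdot)\ge a_\ast$ on $\Omega_h^{n(t)}$ from Proposition~\ref{prop:dalpha_max}. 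The right-hand side is bounded by a constant independent of $h,\delta,t$ by~\eqref{eqn:vel_d_ene_1}. Integrating in time from $0$ to $T_\ast$ gives $\|\partial_x \widehat{u}_{h,\delta}\|_{L^2(\mathscr{D}_{T_\ast})}\le \mathscr{C}_1$.

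There is no real obstacle here since all the hard work (the coercivity/continuity analysis of $a^t_h$, the uniform $\alpha_{h,\delta}$ bounds over $[0,T_\ast]$, and the energy estimate for $\widetilde{u}_h^n$) has been done. The only thing to be careful about is justifying that $\widehat{u}_{h,\delta}(t,\cdot)$ is in $H^1(0,\ell_m)$, which is precisely the reason for introducing the constant extension rather than the zero extension used in~\eqref{eqn:uhn}; this continuity together with piecewise $H^1$ regularity on $(0,\ell_{h,\delta}(t))$ and $(\ell_{h,\delta}(t),\ell_m)$ yields a square-integrable weak derivative over the whole interval.
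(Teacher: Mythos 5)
Your proposal is correct and takes exactly the route the paper intends: the paper states the proposition as ``a direct consequence of Lemma~\ref{lemma:dvel_exis}, Proposition~\ref{prop:dalpha_max} and~\eqref{eqn:u_bound}'' without further detail, and your argument simply fills in those details (the $L^\infty$ bound~\eqref{eqn:u_bound} for the $L^2$ part, the lower bound $\alpha_{h,\delta}\ge a_\ast$ combined with~\eqref{eqn:vel_d_ene_1} for the gradient, and the continuity of the constant extension to justify membership in $H^1(0,\ell_m)$). No gaps.
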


Next, we need to obtain an estimate on the total variation of $\ell_{h,\delta}$. From Proposition~\ref{prop:dalpha_max} it is evident that at each time step, $\ell_{h,\delta}$ can either increase by $h$ or decrease by any value. We show that $\ell_{h,\delta}$ can be expressed as sum of a decreasing function and a function bounded variation as discussed in the next proposition. 

\begin{proposition}[Step~\ref{cr.5}]
\label{prop:lhd_bv}
The piecewise constant function $\ell_{h,\delta} : [0,T_{\ast}] \rightarrow \mathbb{R}$ is of the form $\ell_{h,\delta} = \ell_{h,\delta,BV} + \ell_{h,\delta,D}$, where $\ell_{h,\delta,BV}$ is a function with uniform bounded variation in $(0,T_{\ast})$ and $\ell_{h,\delta,D}$ is a monotonically decreasing function. Consequently,
\begin{equation}
    \sum_{n=1}^{N_{\ast}} |\ell_h^n - \ell_h^{n-1}| \le \mathscr{C}_1.
    \label{eqn:bv_ell}
\end{equation}
\end{proposition}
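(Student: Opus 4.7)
The plan is to write $\ell_{h,\delta}$ as the sum of a purely increasing piece $\ell_{h,\delta,BV}$ (absorbing all the growth jumps of size $h$) and a purely decreasing piece $\ell_{h,\delta,D}$. Concretely, I would define, for $0\le n\le N_\ast$,
\begin{equation*}
\ell_{h,\delta,BV}(t_n) := \ell_0 + h\,\#\{\,1\le k\le n \,:\, \ell_h^{k}=\ell_h^{k-1}+h\,\},\qquad \ell_{h,\delta,D}(t_n):=\ell_{h,\delta}(t_n)-\ell_{h,\delta,BV}(t_n),
\end{equation*}
extended as piecewise--constant right--continuous functions on $[0,T_\ast]$. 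By Proposition~\ref{prop:dalpha_max} and the domain--selection rule \ref{ds.b}, at each time step $\ell_{h,\delta}$ either grows by exactly $h$ (Case $\Omega_h^{n+1}=\Omega_h^n\cup\mathcal{X}_{J_n}$) or does not grow at all (Cases $\Omega_h^{n+1}\subseteq\Omega_h^n$). Consequently, at growth steps $\ell_{h,\delta,BV}$ jumps by $h$ and $\ell_{h,\delta,D}$ is unchanged, while at non--growth steps $\ell_{h,\delta,BV}$ is unchanged and $\ell_{h,\delta,D}$ decreases (weakly). This immediately makes $\ell_{h,\delta,BV}$ monotonically non-decreasing and $\ell_{h,\delta,D}$ monotonically non-increasing.

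Next I would bound the total variation of the increasing piece by counting the growth steps. Since at most one growth event occurs per time step, the number of growth events over $(0,T_\ast)$ is bounded by $N_\ast=T_\ast/\delta$. Therefore
\begin{equation*}
\mathrm{TV}_{(0,T_\ast)}(\ell_{h,\delta,BV}) = \ell_{h,\delta,BV}(T_\ast)-\ell_0 \le h\,N_\ast = T_\ast\,\frac{h}{\delta} \le \frac{T_\ast}{\rho\,\mathscr{C}_{\CFL}},
\end{equation*}
the last inequality being the lower CFL condition in \eqref{eqn:cfl_condition}. In particular $\ell_{h,\delta,BV}$ is uniformly bounded in $BV(0,T_\ast)$, independently of $h,\delta$. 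Since $0\le\ell_{h,\delta}\le\ell_m$ by construction and $\ell_{h,\delta,BV}$ is uniformly bounded from above and below, the decreasing function $\ell_{h,\delta,D}=\ell_{h,\delta}-\ell_{h,\delta,BV}$ is also uniformly bounded; being monotone, its total variation equals $\ell_{h,\delta,D}(0)-\ell_{h,\delta,D}(T_\ast)$, which is therefore uniformly bounded.

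Adding both contributions yields $\mathrm{TV}_{(0,T_\ast)}(\ell_{h,\delta})\le \mathrm{TV}(\ell_{h,\delta,BV})+\mathrm{TV}(\ell_{h,\delta,D})\le \mathscr{C}_1$, which is precisely the telescoping sum \eqref{eqn:bv_ell}. The key observation that makes the argument work is that the growth of $\ell_{h,\delta}$ is quantised in units of $h$ while the number of time steps is $O(1/\delta)$, so the CFL proportionality $h\sim \delta$ gives an $O(1)$ bound on the total growth; without this matching between $h$ and $\delta$ there would be no a priori control of the increasing part, and this is really the only subtle point of the proof.
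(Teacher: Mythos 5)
Your proof is correct and follows essentially the same route as the paper: decompose $\ell_{h,\delta}$ into an increasing part plus a decreasing part, use the lower CFL bound $\delta/h\ge\rho\,\mathscr{C}_{\CFL}$ to control the total growth of the increasing part by $T_\ast/(\rho\,\mathscr{C}_{\CFL})$, and conclude from monotonicity plus uniform boundedness of the decreasing part. The only (immaterial) difference is the choice of the increasing piece — the paper simply takes the linear function $t\mapsto(\rho\,\mathscr{C}_{\CFL})^{-1}t$, whereas you count the actual growth events — but both rest on the same CFL inequality and yield the same estimate.
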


\begin{proof}
Define $\ell_{h,\delta,BV}(t) = (\rho\mathscr{C}_{\CFL})^{-1}t$ and $\ell_{h,\delta,D}(t) = \ell_{h,\delta}(t) - (\rho\mathscr{C}_{\CFL})^{-1}t$ where $\rho$ and $\mathscr{C}_{\CFL}$ are defined in~\eqref{eqn:cfl_condition}.  Clearly, the function $\ell_{h,\delta,BV}$ is of uniform bounded variation. For the function $\ell_{h,\delta,D}$ note that 
\begin{align}
    \ell_{h,\delta,D}(t_{n+1}) - \ell_{h,\delta,D}(t_{n}) = \ell_h^{n+1} - \ell_h^{n} -(\rho\mathscr{C}_{\CFL})^{-1}\delta. 
\end{align}
If $\ell_h^{n+1}- \ell_h^{n} = h$, then by \eqref{eqn:cfl_condition}, $\ell_h^{n+1}- \ell_h^{n} \leq (\rho\mathscr{C}_{\CFL})^{-1}\delta$ and thus $\ell_{h,\delta,D}(t_{n+1}) \leq \ell_{h,\delta,D}(t_{n})$. If $\ell_h^{n+1} \leq \ell_h^{n}$, then $\ell_{h,\delta,D}(t_{n+1}) \leq \ell_{h,\delta,D}(t_{n})$, trivially. Since $\ell_{h,\delta,D}$ is decreasing and uniformly bounded, the bounded variation estimate~\eqref{eqn:bv_ell} follows.
\end{proof}

The compactness results for the function $c_{h,\delta}$ are proved next. Note that Proposition~\ref{prop:chd_energy} already guarantees that $c_{h,\delta} \in L^2(0,T_{\ast};H^1(0,{{\ell_m}}))$, and the Hilbert space structure of this space allows us to extract a weakly convergent subsequence. However, the right hand side of~\eqref{eqn:bform_ot} involves product of two discrete functions $\alpha_{h,\delta}$ and $\Pi_{h,\delta}c_{h,\delta}$. Therefore, the weak convergence of $\Pi_{h,\delta}c_{h,\delta}$ is not sufficient to prove that the limit of $\Pi_{h,\delta} c_{h,\delta}$ is a weak solution. Similarly,~\eqref{eqn:upwind} has non linear rational terms $b_{h,\delta}$ and $d_{h,\delta}$ that involve $\Pi_{h,\delta}c_{h,\delta}$. Therefore, we require strong $L^2(\mathscr{D}_{T_{\ast}})$ convergence for $\Pi_{h,\delta} c_{h,\delta}$.  A standard method to achieve this is to use a discrete Aubin--Simon theorem~(see Theorem~\ref{appen_id.d}).

We state the definition of a compactly and continuously embedded sequence of Banach spaces next.
\begin{definition}[Compactly--continuously embedded sequence] \emph{\bf \cite[Definition C.6]{droniou2018gradient}}.
	\label{def:com_cont}
Let $B$ be a Banach space. The families of Banach spaces $\{X_h,||\cdot||_{X_h}\}_h$ and $\{Y_h,||\cdot||_{Y_h}\}_{h}$ are such that $Y_h \subset X_h \subset B$. We say that the family $\{(X_h,Y_h)\}_h$ is compactly embedded in $B$ if  the following conditions hold. 
\begin{itemize}
		\setlength\itemsep{-0.3em}
    \item Any sequence $\{u_{h}\}_h$ such that $u_h \in X_h$ and $\{||u_{h}||_{X_h}\}_h$ uniformly bounded is relatively compact in $B$. 
    \item Any sequence $\{u_{h}\}_h$ such that $u_h \in X_h$, $\{||u_{h}||_{X_h}\}_h$ uniformly bounded,  $\{u_h\}_h$ converges in $B$, and $||u_h||_{Y_h} \rightarrow 0$, converges to zero in $B$.
\end{itemize}
\end{definition}
\noindent Define  $X_h := \Pi_h (H^1(0,{{\ell_m}}))$ with norm
\begin{subequations}
\begin{align}
\label{norm:xh}
||u||_{X_h} := \inf\,\left\{||w||_{1,(0,{{\ell_m}})} : w \in H^1(0,{{\ell_m}}), u = \Pi_h w \right\}.
\end{align}
Set $Y_h := X_h$ with the discrete dual norm $||\cdot||_{Y_h}$  defined by: $\forall\,u \in Y_h$,
\begin{align}
\label{norm:yh}
||u||_{Y_h} := \sup\,\left\{\int_{0}^{{\ell_m}} u\,\Pi_h v\,\dx\,:\,v \in H^1(0,{{\ell_m}}),\,||v||_{1,(0,{{\ell_m}})} \leq 1\right\}.
\end{align}
\end{subequations}
\begin{lemma}
\label{lemma:com_cont_embedding}
The family of Banach spaces $\{(X_h,Y_h)\}$ with $X_h = \Pi_h (H^1(0,{{\ell_m}}))=Y_h$ and $||\cdot||_{X_h}$ and $||\cdot||_{Y_h}$ as defined in~\eqref{norm:xh} and~\eqref{norm:yh}, respectively, is compactly--continuously embedded in $B = L^2(0,{{\ell_m}})$.
\end{lemma}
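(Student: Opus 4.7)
My plan is to verify the two conditions of Definition~\ref{def:com_cont} separately, using Rellich--Kondrachov on preimages under $\Pi_h$ for the compact embedding, and a duality/density argument for the continuous one.

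For the first condition, I would take a sequence $\{u_h\}_h$ with $u_h\in X_h$ and $\|u_h\|_{X_h}\le C$ uniformly. By definition of the infimum in \eqref{norm:xh}, I can select $w_h\in H^1(0,\ell_m)$ with $u_h=\Pi_h w_h$ and $\|w_h\|_{1,(0,\ell_m)}\le C+1$. By the Rellich--Kondrachov theorem, $\{w_h\}_h$ is relatively compact in $L^2(0,\ell_m)$, so a subsequence converges strongly to some $w\in L^2$. The estimate~\eqref{eqn:ml_interp} of Lemma~\ref{lemma:pihf_f} yields
\begin{equation*}
\|u_h-w_h\|_{0,(0,\ell_m)}=\|\Pi_h w_h-w_h\|_{0,(0,\ell_m)}\le \tfrac{h}{2}\|\partial_x w_h\|_{0,(0,\ell_m)}\le \tfrac{h}{2}(C+1),
\end{equation*}
which tends to zero with $h$. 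Hence the same subsequence of $u_h$ converges to $w$ in $L^2(0,\ell_m)$, proving that $\{u_h\}_h$ is relatively compact in $B$.

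For the second condition, I would take a sequence $\{u_h\}_h\subset X_h$ with $\|u_h\|_{X_h}$ uniformly bounded, $u_h\to u$ in $L^2(0,\ell_m)$, and $\|u_h\|_{Y_h}\to 0$, and show $u=0$. For any $\varphi\in\mathscr{C}_c^\infty(0,\ell_m)\subset H^1(0,\ell_m)$, the definition \eqref{norm:yh} gives
\begin{equation*}
\Big|\int_0^{\ell_m} u_h\,\Pi_h\varphi\,\dx\Big|\le \|u_h\|_{Y_h}\,\|\varphi\|_{1,(0,\ell_m)}\longrightarrow 0.
\end{equation*}
On the other hand, Lemma~\ref{lemma:pihf_f} gives $\Pi_h\varphi\to \varphi$ in $L^2(0,\ell_m)$, and since $u_h\to u$ strongly in $L^2(0,\ell_m)$, the left--hand side converges to $\int_0^{\ell_m} u\,\varphi\,\dx$. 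Comparing the two limits yields $\int_0^{\ell_m} u\,\varphi\,\dx=0$ for every test function $\varphi$, and by density of $\mathscr{C}_c^\infty(0,\ell_m)$ in $L^2(0,\ell_m)$ this forces $u=0$ a.e., so $u_h\to 0$ in $B$.

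Neither step presents a real obstacle: both rely on the fact that the mass-lumping interpolant $\Pi_h$ is $L^2$-close to the identity on $H^1$, uniformly at rate $h$, which is provided by Lemma~\ref{lemma:pihf_f}. The mildly delicate point is simply being careful that the selection of $w_h$ realising (up to a margin) the infimum in \eqref{norm:xh} can be made with $\|w_h\|_{1,(0,\ell_m)}$ uniformly bounded, which is immediate since $\|u_h\|_{X_h}$ itself is bounded.
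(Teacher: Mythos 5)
Your proof is correct. For the first condition of Definition~\ref{def:com_cont} you follow essentially the same path as the paper: extract $w_h\in H^1(0,\ell_m)$ realising (up to a margin) the infimum in \eqref{norm:xh}, use the compact embedding $H^1(0,\ell_m)\hookrightarrow L^2(0,\ell_m)$ to get $w_h\to w$ in $L^2$, and transfer the convergence to $u_h=\Pi_h w_h$ via Lemma~\ref{lemma:pihf_f}; your direct estimate $\|u_h-w_h\|_{0,(0,\ell_m)}\le \tfrac{h}{2}\|\partial_x w_h\|_{0,(0,\ell_m)}$ is a slightly cleaner packaging of the paper's triangle-inequality chain through $\Pi_h w$, and your use of a near-minimiser sidesteps the (harmless, but unremarked) question of whether the infimum is attained. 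For the second condition you take a genuinely different route: you test $u_h$ against $\Pi_h\varphi$ for smooth $\varphi$, pass to the limit using the assumed strong convergence $u_h\to u$ in $B$ together with $\Pi_h\varphi\to\varphi$, and conclude $u=0$ by density. The paper instead tests $u_h$ against its own $H^1$-preimage, writing $\|u_h\|_{0,(0,\ell_m)}^2=\int_0^{\ell_m}u_h\,\Pi_h w_h\,\mathrm{d}x\le \|u_h\|_{Y_h}\|u_h\|_{X_h}\to 0$, which is shorter and does not even invoke the hypothesis that $u_h$ converges in $B$. Both arguments are valid; the paper's buys economy, while yours makes transparent why the dual norm $\|\cdot\|_{Y_h}$ controls the distributional limit.
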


\begin{proof}
We verify the conditions in Definition~\ref{def:com_cont}. Let $\{u_h\}_h \subset B$ be a sequence of functions such that $u_h \in X_h$ and $\{||u_{h}||_{X_h}\}_h$ is bounded. Consider the corresponding sequence $\{w_h\} \subset H^1(0,{{\ell_m}})$ such that $u_h = \Pi_h w_h$ and $||u_{h}||_{X_h} = ||w_h||_{1,(0,{{\ell_m}})}$. The boundedness of $\{||u_{h}||_{X_h}\}_h$ shows that $\{||w_h||_{1,(0,{{\ell_m}})}\}$ is also bounded. Since $H^1(0,{{\ell_m}})$ is compactly embedded in $L^2(0,{{\ell_m}})$, there exists a subsequence $\{w_h\}_h$ up to re--indexing  such that $w_h \halfarrow w$ weakly in $H^1(0,{{\ell_m}})$ and $w_h \rightarrow w$ strongly in $L^2(0,{{\ell_m}})$. We claim that $u_h \rightarrow w$ strongly in $L^2(0,{{\ell_m}})$. To prove this, use the triangle inequality and then apply~\eqref{eqn:ml_interp} and~\eqref{eqn:ml_overbd} to obtain
\begin{align}
    ||u_h - w||_{0,(0,{{\ell_m}})} \leq{}& ||u_h - \Pi_h w||_{0,(0,{{\ell_m}})} + ||\Pi_h w - w||_{0,(0,{{\ell_m}})} \\
    \leq{}& ||\Pi_h(w_h - w)||_{0,(0,{{\ell_m}})} + ||\Pi_h w - w||_{0,(0,{{\ell_m}})} \\
     \label{eqn:compact_xh}
    \leq{}& ||w_h - w||_{0,(0,{{\ell_m}})} + h||\partial_x (w_h - w)||_{0,(0,{{\ell_m}})}. 
\end{align}
Since $w_h\to w$ in $L^2(0,\ell_m)$ while being bounded in $H^1(0,\ell_m)$, \eqref{eqn:compact_xh} shows that $||u_h - w||_{0,(0,{{\ell_m}})} \rightarrow 0$ as $h \rightarrow 0$. This proves the first condition in Definition~\ref{def:com_cont}.

Let $\{u_h\} \subset B$ be such that $u_h \in X_h$, $\{||u_h||_{X_h}\}_h$ is bounded, $||u_h||_{Y_h} \rightarrow 0$ as $h \rightarrow 0$, and $u_h$ converges in $B$. Let $w_h \in X_h$ be such that $\Pi_h w_h = u_h$ and $||w_h||_{1,(0,{{\ell_m}})} = ||u_h||_{X_h}$. Then, note that 
\begin{align}
    ||u_h||_{0,(0,{{\ell_m}})}^2 &= \int_{0}^{{\ell_m}} \,u_h\,\Pi_h w_h\,\dx  \leq ||u_{h}||_{Y_h}||w_h||_{1,(0,\ell_m)}\le  ||u_{h}||_{Y_h} ||u_{h}||_{X_h}.
\end{align}
The assumed properties on $\{u_h\}_h$ then show that $u_h \to 0$ in $L^2(0,\ell_m)$, which concludes the proof.
\end{proof}

To obtain the relative compactness of  $\{\Pi_{h,\delta}c_{h,\delta}\}_{h,\delta}$ in $L^2(\mathscr{D}_{T_{\ast}})$, we start with an auxiliary function $\varphi_{h,\epsilon}^n : [0,\ell_m] \rightarrow [0,1]$  defined by for a fixed $\epsilon>0$  (see Figure~\ref{fig:aux_fuction}) 
	\begin{equation}
	\varphi_{h,\epsilon}^n(x) = \left\{ 
	\begin{array}{c c}
	1 & 0 \leq x \leq \ell_h^n - \epsilon,  \\
	(\ell_h^n - x)/\epsilon & \ell_h^n - \epsilon < x \leq \ell_h^n, \\
	0 & \ell_h^n< x \leq {{\ell_m}}.
	\end{array} \right.
	\label{eqn:aux_ftn}
	\end{equation}
	\begin{figure}[h!]
		\centering
		\includegraphics[scale=0.8]{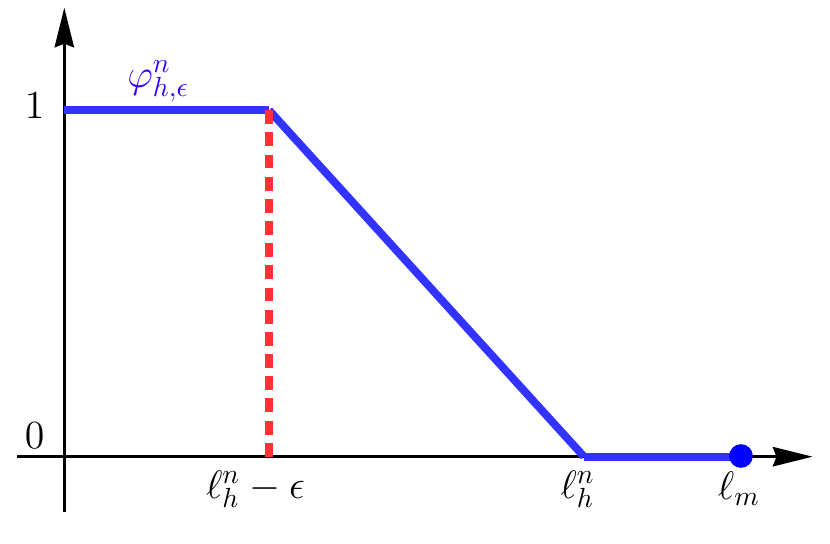}
		\caption{The auxiliary function $\varphi_{h,\epsilon}^n$.}
		\label{fig:aux_fuction}
	\end{figure}

\noindent For $\widehat{c}_{h,\delta} = c_{h,\delta} - 1$ the mass lumped function can be split into
\begin{equation}
\Pi_{h,\delta} \widehat{c}_{h,\delta} = \Pi_{h,\delta} (\widehat{c}_{h,\delta} \varphi_{h,\epsilon}) + \Pi_{h,\delta} (\widehat{c}_{h,\delta} (1 - \varphi_{h,\epsilon})),
\end{equation} where $\varphi_{h,\epsilon} = \varphi_{h,\epsilon}^n$ on $\mathcal{T}_n = (t_{n},t_{n+1})$ for $0 \leq n \leq N_{\ast} - 1$.  Consider the second term $\Pi_{h,\delta} (\widehat{c}_{h,\delta} (1 - \varphi_{h,\epsilon}))$, which is equal to $\Pi_h\,(\widehat{c}_{h}^n(1 - \varphi_{h,\epsilon}^n))$ on $\mathcal{T}_n$. A use of the facts $1 - \varphi_{h,\epsilon}^n  = 0 \text{ on }[0,\ell_h^n - \epsilon)$, $\Pi_h \widehat{c}_{h}^n = 0$ (see Figure~\ref{fig:aux_fuction}) on $(\ell_h^n,{{\ell_m}}]$ and the property $\Pi_h(fg) = (\Pi_hf)\,(\Pi_h g)$ yield
\begin{align}
    ||\Pi_h\,(\widehat{c}_{h}^n(1 - \varphi_{h,\epsilon}^n))||_{0,(0,{{\ell_m}})}^2
    &= \int_{\ell_h^n - \epsilon}^{\ell_h^n} |\Pi_h\,(\widehat{c}_{h}^n(1 - \varphi_{h,\epsilon}^n))|^2\dx \nonumber \\
    \label{eqn:t2_linfty}
    &\leq{}\epsilon\,||\Pi_h\,(\widehat{c}_{h}^n(1 - \varphi_{h,\epsilon}^n))||_{L^\infty(0,{{\ell_m}})}^2. 
\end{align}
Multiply~\eqref{eqn:t2_linfty} by $\delta$, sum over $n=0,\ldots,N_{\ast}-1$, and use the bounds $||\Pi_h(1 - \varphi_{h,\epsilon}^n)||_{L^\infty(0,{{\ell_m}})} \leq 1$ and $||\Pi_h \widehat{c}_{h}^n||_{L^\infty(0,{{\ell_m}})} \leq 1$ to obtain
\begin{align}
    ||\Pi_{h,\delta} (\widehat{c}_{h,\delta} (1-\varphi_{h,\epsilon}))||_{L^2(\mathscr{D}_{T_{\ast}})} \leq \sqrt{T_{\ast}\epsilon}.
    \label{eqn:chd_l2ball}
\end{align} 
 \noindent Proposition~\ref{prop:phi_c_compact} establishes that the family of functions $\{\Pi_{h,\delta} (\varphi_{h,\epsilon}\widehat{c}_{h,\delta})\}_{h,\delta}$ is relatively compact in $L^2(\mathscr{D}_{T_{\ast}})$. Then, Proposition~\ref{prop:phi_c_compact} and~\eqref{eqn:chd_l2ball} are used to prove Proposition~\ref{prop:step_ce5}.

 \begin{definition}[Discrete time derivative]
 	The discrete time derivative of a function $f$ on $\mathscr{D}_{T_{\ast}}$ is defined as follows: on $\mathcal{T}_n$,
 	\begin{align}
 	D_{h,\delta}^n f := \dfrac{\Pi_h f(t_{n+1},\cdot) - \Pi_h f(t_{n},\cdot)}{\delta}.
 	\label{eqn:disc_time_der}
 	\end{align}
 \end{definition} 
\begin{definition}[Piecewise linear interpolant operator]
The piecewise linear interpolant operator $\mathcal{I}_h : H^1(0,{{\ell_m}}) \rightarrow \mathcal{S}_h$ is defined by
\begin{align}
    \mathcal{I}_hf(x) = f(x_j)\dfrac{x_{j+1} - x}{h}+ f(x_{j+1})\dfrac{x - x_{j}}{h}\quad\forall\,x \in \mathcal{X}_j,\,j = 0,\ldots,J-1.
    \label{eqn:lin_interp}
\end{align} 
\end{definition}
\noindent We are now in a position to prove the relative compactness of $\{\Pi_{h,\delta} (\varphi_{h,\epsilon}\widehat{c}_{h,\delta})\}_{h,\delta}$  in $L^2(\mathscr{D}_{T_{\ast}})$, which is required to prove Step~\ref{cr.5}.
\begin{proposition}
\label{prop:phi_c_compact}
The family of functions $\{\Pi_{h,\delta} (\varphi_{h,\epsilon}\widehat{c}_{h,\delta})\}_{h,\delta}$ is relatively compact in $L^2(\mathscr{D}_{T_{\ast}})$.
\end{proposition}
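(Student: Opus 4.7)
The plan is to apply the discrete Aubin--Simon theorem (Theorem~\ref{appen_id.d}) with the compactly--continuously embedded family $\{(X_h,Y_h)\}_h$ established in Lemma~\ref{lemma:com_cont_embedding}. Setting $f_{h,\delta} := \Pi_{h,\delta}(\varphi_{h,\epsilon}\widehat{c}_{h,\delta})$, the two ingredients to check are: (i) $\{f_{h,\delta}\}_{h,\delta}$ is uniformly bounded in $L^2(0,T_{\ast};X_h)$; (ii) the discrete time derivative $D_{h,\delta}f_{h,\delta}$ is uniformly bounded in $L^1(0,T_{\ast};Y_h)$. Throughout, $\epsilon$ is fixed, so bounds may depend on $\epsilon$.

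For (i), I use the identity $\Pi_h(\varphi_{h,\epsilon}^n\widehat{c}_h^n)=(\Pi_h\varphi_{h,\epsilon}^n)(\Pi_h\widehat{c}_h^n)$ and observe that the \emph{continuous} function $\varphi_{h,\epsilon}^n\widehat{c}_h^n$ is itself an $H^1(0,\ell_m)$ representative. The definition of $\|\cdot\|_{X_h}$ then yields $\|\Pi_h(\varphi_{h,\epsilon}^n\widehat{c}_h^n)\|_{X_h}\le\|\varphi_{h,\epsilon}^n\widehat{c}_h^n\|_{1,(0,\ell_m)}$. Expanding via the product rule, using $0\le\varphi_{h,\epsilon}^n\le 1$, $|\partial_x\varphi_{h,\epsilon}^n|\le 1/\epsilon$ on an interval of width $\epsilon$, the pointwise bound $|\widehat{c}_h^n|\le 1$ from Lemma~\ref{lemma:dot_max}, and the $L^2(0,T_{\ast};H^1)$ estimate of Proposition~\ref{prop:chd_energy}, I obtain (i).

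For (ii), let $v\in H^1(0,\ell_m)$ with $\|v\|_{1,(0,\ell_m)}\le 1$. Split the increment as
\begin{equation*}
\varphi_{h,\epsilon}^{n+1}\widehat{c}_h^{n+1}-\varphi_{h,\epsilon}^n\widehat{c}_h^n=\varphi_{h,\epsilon}^{n+1}(\widehat{c}_h^{n+1}-\widehat{c}_h^n)+\widehat{c}_h^n(\varphi_{h,\epsilon}^{n+1}-\varphi_{h,\epsilon}^n).
\end{equation*}
For the first contribution, the test function $\mathcal{I}_h(\varphi_{h,\epsilon}^{n+1} v)$ lies in $\mathcal{S}_{h,0}^{n+1}$ because $\varphi_{h,\epsilon}^{n+1}(\ell_h^{n+1})=0$; plugging it into the discrete equation~\eqref{eqn:bform_ot} (with a Dirichlet lift to handle $\widehat{c}_h^{n+1}$) expresses the associated time-difference integral in terms of the diffusion and reaction terms, which are controlled by the $L^2(0,T_{\ast};H^1)$ bound of Proposition~\ref{prop:chd_energy}, the bound $0\le\alpha_h^{n+1}\le a^{\ast}$ from Proposition~\ref{prop:dalpha_max}, the $L^\infty$ bound on $c_{h,\delta}$, and the interpolation estimates~\eqref{eqn:ml_interp}--\eqref{eqn:ml_overbd} together with the norm equivalence of Remark~\ref{rem:eqviv_pihnorm}. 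For the second contribution, $\varphi_{h,\epsilon}^{n+1}-\varphi_{h,\epsilon}^n$ is pointwise bounded by $1$ and supported on an interval of length at most $\epsilon+|\ell_h^{n+1}-\ell_h^n|$; combining $|\widehat{c}_h^n|\le 1$ with the $L^\infty$ control $\|\Pi_h v\|_{L^\infty}\lesssim\|v\|_{1,(0,\ell_m)}$ and summing in $n$, the telescoping sum $\sum_n|\ell_h^{n+1}-\ell_h^n|$ is dominated by Proposition~\ref{prop:lhd_bv}, while the residual $\epsilon$-term contributes an $O(T_{\ast})$ bound after multiplying by $1/\delta$ in the discrete time derivative.

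The main obstacle is the second piece of the splitting: the cut-off $\varphi_{h,\epsilon}$ moves with the free boundary $\ell_h^n$, so its temporal variation is not automatic and can only be controlled through the BV estimate on $\ell_{h,\delta}$ (Proposition~\ref{prop:lhd_bv}). Once (i) and (ii) are established, Theorem~\ref{appen_id.d} delivers the relative compactness of $\{f_{h,\delta}\}_{h,\delta}$ in $L^2(0,T_{\ast};B)=L^2(\mathscr{D}_{T_{\ast}})$, which is the desired conclusion.
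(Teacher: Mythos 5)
Your strategy coincides with the paper's: the discrete Aubin--Simon theorem with the spaces of Lemma~\ref{lemma:com_cont_embedding}, the discrete product--rule splitting of the time increment, the test function $\mathcal{I}_h(\varphi_{h,\epsilon}\,v)$ inserted into \eqref{eqn:bform_ot} (with a Dirichlet lift) for the piece carrying the increment of $\widehat{c}_h$, and the BV estimate \eqref{eqn:bv_ell} on $\ell_{h,\delta}$ for the piece carrying the increment of the moving cut--off. Your item (i) and the first piece of item (ii) are sound and match the paper's verification of \eqref{as.2} and of the term $T_1$ there.

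There is, however, a genuine gap in your treatment of the second piece of the splitting. You bound $\varphi_{h,\epsilon}^{n+1}-\varphi_{h,\epsilon}^{n}$ pointwise by $1$ on a support of measure at most $\epsilon+|\ell_h^{n+1}-\ell_h^n|$. This yields a per--step contribution to $\|D_{h,\delta}^{n}(\varphi_{h,\epsilon}\widehat{c}_{h,\delta})\|_{Y_h}$ of order $(\epsilon+|\ell_h^{n+1}-\ell_h^n|)/\delta$, so the $L^1(0,T_{\ast};Y_h)$ norm picks up $\sum_{n}(\epsilon+|\ell_h^{n+1}-\ell_h^n|)=N_{\ast}\epsilon+O(1)$. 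Since $N_{\ast}=T_{\ast}/\delta$ and $\epsilon$ is fixed while $\delta\to0$, the term $N_{\ast}\epsilon=T_{\ast}\epsilon/\delta$ is unbounded; it is not the ``$O(T_{\ast})$ bound'' you assert, and condition \eqref{as.3} fails with this estimate. The repair is the sharper pointwise bound actually used in the paper: by construction $\varphi_{h,\epsilon}^{n}(x)=\varphi_{h,\epsilon}^{n+1}(x-\ell_h^{n}+\ell_h^{n+1})$, and $\varphi_{h,\epsilon}^{n+1}$ is $1/\epsilon$--Lipschitz, whence $|\varphi_{h,\epsilon}^{n+1}(x)-\varphi_{h,\epsilon}^{n}(x)|\le|\ell_h^{n+1}-\ell_h^n|/\epsilon$ for \emph{every} $x$. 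Then the per--step contribution is $\lesssim \ell_m|\ell_h^{n+1}-\ell_h^n|/(\delta\epsilon)$, and the whole sum is controlled by $\epsilon^{-1}\sum_n|\ell_h^{n+1}-\ell_h^n|$, i.e.\ by Proposition~\ref{prop:lhd_bv} alone, with no residual $\epsilon/\delta$ term. (A minor omission: Theorem~\ref{appen_id.d} also requires the boundedness of $\{\Pi_{h,\delta}(\varphi_{h,\epsilon}\widehat{c}_{h,\delta})\}_{h,\delta}$ in $L^2(0,T_{\ast};L^2(0,\ell_m))$, which is immediate from $|\varphi_{h,\epsilon}|\le1$ and Proposition~\ref{prop:chd_energy}, but should be stated.)
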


\begin{proof}
The desired result follows from the discrete Aubin--Simon theorem (see Theorem~\ref{appen_id.d}), for which we need to verify the conditions \eqref{as.1}--\eqref{as.3} with $B = L^2(0,\ell_m)$ and $Y_h = X_h = \Pi_h(H^1(0,{{\ell_m}}))$. The family
\begin{subequations}
\begin{align}
\label{as.1}
&\;\; \text{$\{\Pi_{h,\delta} (\varphi_{h,\epsilon} \widehat{c}_{h,\delta})\}_{h,\delta}$ is bounded in $L^2(0,T_{\ast};B)$.} \\
\label{as.2}
&\;\; \text{$\{||\Pi_{h,\delta} (\varphi_{h,\epsilon} \widehat{c}_{h,\delta})||_{L^2(0,T_{\ast};X_h)}\}_{h,\delta}$ is bounded.} 
\\
\label{as.3}
&\;\; \text{$\{||D_{h,\delta} (\varphi_{h,\epsilon} \widehat{c}_{h,\delta})||_{L^1(0,T_{\ast};Y_h)}\}_{h,\delta}$ is bounded.}
\end{align}
\end{subequations}
Proposition~\ref{prop:chd_energy} and the bound $|\varphi_{h,\epsilon}|\le 1$ yields~\eqref{as.1}. We have $|\varphi_{h,\epsilon}|\le 1$ and $|\partial_x \varphi_{h,\epsilon}|\le 1/\epsilon$, so for all $t\in (0,T_\ast)$,
\[
|\varphi_{h,\epsilon}(t,\cdot)\widehat{c}_{h,\delta}(t,\cdot)|_{1,(0,\ell_m)}\le |\widehat{c}_{h,\delta}(t,\cdot)|_{1,(0,\ell_m)}+\epsilon^{-1}|\widehat{c}_{h,\delta}(t,\cdot)|_{0,(0,\ell_m)}.
\]
 The facts $||\varphi_{h,\epsilon}^n||_{L^\infty(0,{{\ell_m}})} \le 1$, $||\widehat{c}_{h}^n||_{L^\infty(0,{{\ell_m}})} \le 1$, $|\partial_x \varphi_{h,\epsilon}^n| \le 1/\epsilon$ and $\partial_x \varphi_{h,\epsilon}^n = 0$ on $[0,\ell_h^n - \epsilon - h)$, and $(\ell_h^n + h,{{\ell_m}})$ yield
\begin{align}
    |\varphi_{h,\epsilon}^n \widehat{c}_{h}^n|_{1,(0,{{\ell_m}})}^2 &\le 2 \int_{0}^{{\ell_m}} |\partial_x \widehat{c}_{h}^n|^2\,\mathrm{d}x + 2 \int_{\ell_h^n - \epsilon - h}^{\ell_h^n+h} \dfrac{1}{\epsilon^2} |\widehat{c}_{h}^n|^2\,\mathrm{d}x \\
    &\le 2 |\widehat{c}_{h}^n|_{1,(0,{{\ell_m}})}^2 + \dfrac{2(\epsilon + 2h)}{\epsilon^2},
\end{align}
and hence a use of \eqref{norm:xh},~Remark~\ref{rem:eqviv_pihnorm}, and  Proposition~\ref{prop:chd_energy} leads to 
\begin{align}
    ||\Pi_{h,\delta}(\varphi_{h,\epsilon} \widehat{c}_{h,\delta})||_{L^2(0,T_{\ast};X_h)} \leq ||\varphi_{h,\epsilon} \widehat{c}_{h,\delta}||_{L^2(0,T_{\ast};H^1(0,{{\ell_m}}))} \le \mathscr{C}_1 + \dfrac{2T_{\ast}(\epsilon + 2h)}{\epsilon},
\end{align}
which verifies \eqref{as.2}. To verify~\eqref{as.3}, we start with the estimation of  $||D_{h,\delta}^{n-1}(\varphi_{h,\epsilon}\widehat{c}_{h,\delta})||_{Y_h}$. Let $v_h \in H^1(0,{{\ell_m}})$ with $||v_h||_{1,(0,{{\ell_m}})} \leq 1$. Note that~\eqref{eqn:disc_time_der} along with the identity~\eqref{eqn:split_id_2} yields
\begin{align}
    D_{h,\delta}^{n-1}(\varphi_{h,\epsilon}\widehat{c}_{h,\delta}) =  (D_{h,\delta}^{n-1}\widehat{c}_{h,\delta})\Pi_h \varphi_{h,\epsilon}^n + (D_{h,\delta}^{n-1}\varphi_{h,\epsilon}) \Pi_h \widehat{c}_{h}^{n-1},
\end{align}
and hence
\begin{align*}
    \int_{0}^{{{\ell_m}}} D_{h,\delta}^{n-1} (\varphi_{h,\epsilon} \widehat{c}_{h,\delta}) \Pi_h v_{h}\dx ={}& \int_{0}^{{{\ell_m}}}  (D_{h,\delta}^{n-1}\widehat{c}_{h,\delta})\Pi_h \varphi_{h,\epsilon}^n \Pi_h v_{h}\dx \\
    &+ \int_{0}^{{{\ell_m}}}(D_{h,\delta}^{n-1}\varphi_{h,\epsilon}) \Pi_h \widehat{c}_{h}^{n-1} \Pi_h v_{h}\dx =: T_1 + T_2.
\end{align*}
To estimate $T_1$, observe that  $\varphi_{h,\epsilon}^n$ is zero on $[\ell_h^n,{{\ell_m}}]$. Use the result $(\Pi_h f)\,(\Pi_h g) = \Pi_h(fg)$ to obtain
\begin{align}
    T_1 = \int_{0}^{\ell_h^n} (D_{h,\delta}\widehat{c}_{h}^{n-1}) \Pi_h(\varphi_{h,\epsilon}^{n}\,v_{h})\dx. 
\end{align}
Now observe that $\Pi_h(\varphi_{h,\epsilon}^{n}\,v_{h}) = \Pi_h(\mathcal{I}_h(\varphi_{h,\epsilon}^{n}\,v_{h}))$, where $\mathcal{I}_h$ is defined by~\eqref{eqn:lin_interp}. Therefore, \eqref{eqn:bform_ot} with a Dirichlet lift of $-1$ tested against $\mathcal{I}_h(\varphi_{h,\epsilon}^{n}\,v_{h}) \in S_{h,0}^n$ yields
\begin{multline}
  T_1
  = -\lambda \int_{0}^{\ell_h^n} \partial_{x} \widehat{c}_{h}^{n-1} \partial_{x} (\mathcal{I}_h(v_{h}\varphi_{h,\epsilon}^n))\,\mathrm{d}x \,-\, Q\int_{0}^{\ell_h^n} \dfrac{\alpha_{h,\delta}(t_n,\cdot)\Pi_h \widehat{c}_{h}^n}{1 + \widehat{Q}_1|\Pi_h c_h^{n-1}|} \Pi_h (v_{h} \varphi_{h,\epsilon}^n)\,\dx \\
    \label{eqn:dual_norm_3}
  - Q\int_{0}^{\ell_h^n} \dfrac{\alpha_{h,\delta}(t_n,\cdot)}{1 + \widehat{Q}_1|\Pi_h c_h^{n-1}|} \Pi_h (v_{h} \varphi_{h,\epsilon}^n)\,\dx.
\end{multline}
We have $|| \mathcal{I}_{h} w||_{1,(0,\ell_h^n)} \le ||w||_{1,(0,\ell_h^n)}$ and $|| \varphi_{h,\epsilon}^n v_h ||_{1,(0,\ell_h^n)} \le \mathscr{C}_{2}(\epsilon)$, where $\mathscr{C}_{2}(\epsilon)$ is a generic constant that depends on $\epsilon$. Also, it holds $(1 + \widehat{Q}_1|\Pi_h c_h^{n-1}|)^{-1} \le 1$. Hence,
\begin{align}
\label{eqn:time_dual_t3}
   T_1 \le \mathscr{C}_2(\epsilon)||\partial_{x} \widehat{c}_{h}^{n-1}||_{0,(0,\ell_h^n)} +  \frac32 Q || \Pi_h \widehat{c}_{h}^n||_{0,(0,\ell_h^n)} + (3/2)Q\sqrt{{{\ell_m}}}. 
\end{align}
The constant $(3/2)$ in~\eqref{eqn:time_dual_t3} results from the application of the Cauchy--Schwarz inequality to integral $(\Pi_h \widehat{c}_{h}^n,\Pi_h (v_{h} \varphi_{h,\epsilon}^n))_{(0,\ell_m)}$, the facts $\Pi_h (v_{h} \varphi_{h,\epsilon}^n) = (\Pi_h v_{h} )(\Pi_h\varphi_{h,\epsilon}^n)$, $|\Pi_h\varphi_{h,\epsilon}^n| \le 1$, and~\eqref{eqn:ml_overbd}.  Next, we estimate the term $T_2$. The function $\varphi_{h,\epsilon}$ has the property $\varphi_{h,\epsilon}^{n-1}(x) = \varphi_{h,\epsilon}^n(x - \ell_h^{n-1} + \ell_h^n)$ by definition. This with the fact that $\varphi_{h,\epsilon}^n$ is $1/\epsilon$--Lipschitz, implies $|D_{h,\delta}^{n-1}\varphi_{h,\epsilon}| \leq |\ell_h^n - \ell_h^{n-1}|/(\delta\epsilon)$. Consequently, 
\begin{equation}
 \label{eqn:time_dual_t4}
    |T_2| \le \dfrac{\ell_m }{\delta \epsilon} |\ell_h^n - \ell_h^{n-1}|.
\end{equation}
Now let us conclude the argument. The estimates~\eqref{eqn:time_dual_t3} and~\eqref{eqn:time_dual_t4} yield
\begin{multline}
    \int_{0}^{{{\ell_m}}} D_{h,\delta} (\varphi_{h,\epsilon}^n \widehat{c}_{h}^n)(t_{n-1},\cdot) \Pi_h v_{h}\dx
 \le
   \mathscr{C}_2(\epsilon)||\partial_{x} \widehat{c}_{h}^{n-1}||_{0,(0,\ell_h^n)} \\+ (3/2)Q|| \Pi_h \widehat{c}_{h}^n||_{0,(0,\ell_h^n)} + (3/2)Q\sqrt{{{\ell_m}}} 
    + \dfrac{\ell_m }{\delta \epsilon}|\ell_h^{n} - \ell_h^{n-1}|.
    \label{eqn:time_dual_penult}
\end{multline}
Therefore, taking the supremum over the considered $v_h$, multiplying~\eqref{eqn:time_dual_penult} by $\delta$ and summing over $n = 1,\ldots,N_{\ast}$ yield
\begin{multline}
    \int_{0}^{T_{\ast}} ||D_{h,\delta} (\varphi_{h,\epsilon} \widehat{c}_{h,\delta})||_{Y_h}\,\mathrm{d}t \le{} \mathscr{C}_2(\epsilon)\left[1 +  \sum_{n=1}^{N_{\ast}} |\ell_h^{n} - \ell_h^{n-1}| \right.\\
    + \left. \sum_{n=1}^{N_{\ast}} \delta(|| \Pi_h \widehat{c}_{h}^n||_{0,(0,\ell_h^n)} + ||\partial_{x} \widehat{c}_{h}^n||_{0,(0,\ell_h^n)}) \right].
\end{multline}
Then,  \eqref{as.3} follows from an application of discrete Cauchy--Schwarz inequality,~\eqref{eqn:bv_ell}, and Proposition~\ref{prop:chd_energy}.
\end{proof}

\begin{proposition}[Step~\ref{cr.6}]
	\label{prop:step_ce5}
The family of functions $\{\Pi_{h,\delta} c_{h,\delta}\}_{h,\delta}$ is relatively compact in ${L}^2(\mathscr{D}_{T_{\ast}})$.
\end{proposition}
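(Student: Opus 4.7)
\medskip
\noindent\textbf{Proof proposal for Proposition~\ref{prop:step_ce5}.}
The plan is to combine the auxiliary cut-off already introduced with a diagonal extraction. First, I would observe that $\Pi_{h,\delta} c_{h,\delta} = 1 + \Pi_{h,\delta}\widehat{c}_{h,\delta}$ (using $c_{h,\delta} = 1 + \widehat{c}_{h,\delta}$ and the fact that $\Pi_h$ applied to the constant $1$ returns $1$ up to setting the boundary spike, which is inconsequential in $L^2$). Since the constant family $\{1\}$ is trivially relatively compact in $L^2(\mathscr{D}_{T_\ast})$, it is enough to prove the relative compactness of $\{\Pi_{h,\delta}\widehat{c}_{h,\delta}\}_{h,\delta}$.

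Second, for each $\epsilon>0$ I would use the splitting established just before Proposition~\ref{prop:phi_c_compact},
\begin{equation*}
\Pi_{h,\delta}\widehat{c}_{h,\delta} = \Pi_{h,\delta}(\varphi_{h,\epsilon}\widehat{c}_{h,\delta}) + \Pi_{h,\delta}\bigl(\widehat{c}_{h,\delta}(1-\varphi_{h,\epsilon})\bigr).
\end{equation*}
Proposition~\ref{prop:phi_c_compact} provides relative compactness of the first summand in $L^2(\mathscr{D}_{T_\ast})$ for every fixed $\epsilon$, while the pointwise estimate~\eqref{eqn:chd_l2ball} controls the second summand uniformly in $(h,\delta)$ by $\sqrt{T_\ast \epsilon}$, which is arbitrarily small.

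Third, I would conclude by a diagonal argument. Let $(h_m,\delta_m)_m$ be an arbitrary sequence with $(h_m,\delta_m)\to 0$, and choose $\epsilon_k := 1/k$. Applying Proposition~\ref{prop:phi_c_compact} with $\epsilon=\epsilon_1$ yields a subsequence along which $\Pi_{h_m,\delta_m}(\varphi_{h_m,\epsilon_1}\widehat{c}_{h_m,\delta_m})$ converges in $L^2(\mathscr{D}_{T_\ast})$. Extracting successive subsequences for $\epsilon_2,\epsilon_3,\dots$ and passing to the diagonal produces a single subsequence (still indexed by $m$) such that, for every $k$, $\Pi_{h_m,\delta_m}(\varphi_{h_m,\epsilon_k}\widehat{c}_{h_m,\delta_m})$ converges in $L^2(\mathscr{D}_{T_\ast})$. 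Given $\eta>0$, choose $k$ so large that $2\sqrt{T_\ast\epsilon_k}<\eta/2$; then for $m,m'$ large enough,
\begin{equation*}
\bigl\|\Pi_{h_m,\delta_m}\widehat{c}_{h_m,\delta_m}-\Pi_{h_{m'},\delta_{m'}}\widehat{c}_{h_{m'},\delta_{m'}}\bigr\|_{L^2(\mathscr{D}_{T_\ast})} < \eta,
\end{equation*}
by the triangle inequality applied to the two pieces of the splitting at level $\epsilon_k$. The diagonal subsequence is therefore Cauchy in $L^2(\mathscr{D}_{T_\ast})$ and hence convergent, which yields the claimed relative compactness.

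The main obstacle in this proof is not in Proposition~\ref{prop:step_ce5} itself, which only needs a clean assembly of the two ingredients above, but in the careful choice of the cut-off $\varphi_{h,\epsilon}$ so that both (i) the tail estimate~\eqref{eqn:chd_l2ball} is uniform in $(h,\delta)$ and (ii) the compactness of $\Pi_{h,\delta}(\varphi_{h,\epsilon}\widehat{c}_{h,\delta})$ via the discrete Aubin--Simon theorem can absorb the extra factor $1/\epsilon$ coming from $\partial_x \varphi_{h,\epsilon}$ and from the discrete time derivative term $(D_{h,\delta}\varphi_{h,\epsilon})\Pi_h\widehat{c}_{h,\delta}$ (controlled via the BV estimate~\eqref{eqn:bv_ell} on $\ell_{h,\delta}$). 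Those difficulties have already been handled in Proposition~\ref{prop:phi_c_compact}, so Proposition~\ref{prop:step_ce5} reduces to the standard total-boundedness-plus-diagonal argument outlined above.
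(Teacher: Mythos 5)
Your proof is correct and follows essentially the same route as the paper: the same splitting $\Pi_{h,\delta}\widehat{c}_{h,\delta} = \Pi_{h,\delta}(\varphi_{h,\epsilon}\widehat{c}_{h,\delta}) + \Pi_{h,\delta}(\widehat{c}_{h,\delta}(1-\varphi_{h,\epsilon}))$, Proposition~\ref{prop:phi_c_compact} for the first piece and the uniform tail bound~\eqref{eqn:chd_l2ball} for the second. The only difference is cosmetic: the paper packages the conclusion as total boundedness (the family sits inside a relatively compact set plus a ball of radius $\sqrt{T_\ast\epsilon}$, hence admits a finite $\eta$-cover), whereas you run a diagonal Cauchy extraction — both are standard and equivalent ways to finish.
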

\begin{proof}
Since~\eqref{eqn:chd_l2ball} holds true, for any $\epsilon > 0$,
\begin{equation}
    \label{eqn:tot_bound}
    \{\Pi_{h,\delta} \widehat{c}_{h,\delta}\}_{h,\delta} \subset \{\Pi_{h,\delta} (\varphi_{h,\epsilon} \widehat{c}_{h,\delta})\}_{h,\delta} + B_{{L}^2(\mathscr{D}_{T_{\ast}})}\left(0;\sqrt{T_{\ast}\epsilon}\right),
\end{equation}
where $ B_{{L}^2(\mathscr{D}_{T_{\ast}})}\left(0;\sqrt{T_{\ast}\epsilon}\right)$ is the ball in ${L}^2(\mathscr{D}_{T_{\ast}})$ centered at the zero function with radius $\sqrt{T_{\ast}\epsilon}$. The relative compactness of the set $\{\Pi_{h,\delta} (\varphi_{h,\epsilon} \widehat{c}_{h,\delta})\}_{h,\delta}$ from Proposition~\ref{prop:phi_c_compact} and~\eqref{eqn:tot_bound} show that $\{\Pi_{h,\delta} \widehat{c}_{h,\delta}\}_{h,\delta}$ can be covered by finite number of ${L}^2(\mathscr{D}_{T_{\ast}})$ balls with radius $\eta$ for any $\eta > 0$, hence is totally bounded in ${L}^2(\mathscr{D}_{T_{\ast}})$, and thus relatively compact. Then, the relation $c_{h,\delta} = \widehat{c}_{h,\delta} + 1$ yields the desired result.
\end{proof}

We use Helly's selection theorem for $\{\alpha_{h,\delta}\}$ and $\{\ell_{h,\delta}\}$, weak compactness of $\{\widehat{u}_{h,\delta}\}$ in $L^2(0,T_\ast;H^1(0,\ell_m))$, and relative compactness of $\{\Pi_{h,\delta} c_{h,\delta}\}$ in $L^2(\mathscr{D}_{T_{\ast}})$ to prove Theorem~\ref{thm:main_thm_1}. \medskip \\
\noindent\textbf{Proof of Theorem \ref{thm:main_thm_1}} (Step~\ref{cr.7}. convergence of the iterates). \medskip \\
Proposition~\ref{prop:dalpha_max} establishes the existence of a time $T_{\ast}$ such that $\alpha_{h,\delta} \in L^\infty(\mathscr{D}_{T_{\ast}})$. Propositions~\ref{prop:alpha_sbv} and~\ref{prop:alpha_tbv} show that $\alpha_{h,\delta} \in BV(\mathscr{D}_{T_{\ast}})$. Therefore, Helly's selection theorem  guarantees the existence of a subsequence $\{\alpha_{h,\delta}\}$ up to re--indexing and a function $\alpha \in BV(\mathscr{D}_{T_{\ast}})\cap L^\infty(\mathscr{D}_{T_{\ast}})$ such that $\alpha_{h,\delta} \rightarrow \alpha$ in $L^1(\mathscr{D}_{T_{\ast}})$ and almost everywhere in $\mathscr{D}_{T_{\ast}}$. 

Proposition~\ref{prop:lhd_bv} shows that the family $\{\ell_{h,\delta}\}_{h,\delta}$ is bounded in $BV(0,T_{\ast})$. Therefore, Helly's selection theorem guarantees the existence of a function $\ell \in BV(0,T_{\ast})\cap L^\infty(0,T_{\ast})$ such that $\ell_{h,\delta} \rightarrow \ell$ strongly in $L^1(0,T_{\ast})$ and almost everywhere in $(0,T_{\ast})$.

An application of Proposition~\ref{prop:uhd_compact} shows that there exist a subsequence $\{\widehat{u}_{h,\delta}\}_{h,\delta}$ and a function $\widehat{u} \in L^2(0,T_{\ast};H^{1}(0,{{\ell_m}}))$ such that $\widehat{u}_{h,\delta} \halfarrow \widehat{u}$ weakly and $\partial_{x} \widehat{u}_{h,\delta} \halfarrow \partial_x \widehat{u}$ weakly in $L^2(\mathscr{D}_{T_\ast})$

Proposition~\ref{prop:chd_energy} yields a subsequence $\{c_{h,\delta}\}_{h,\delta}$, up to re--indexing, and a function $c \in L^2(0,T_{\ast};H^1(0,{{\ell_m}}))$ such that $c_{h,\delta} \halfarrow c$ and $\partial_x c_{h,\delta} \halfarrow \partial_x c$ weakly in $L^2(\mathscr{D}_{T_{\ast}})$. Proposition~\ref{prop:step_ce5} establishes the strong convergence of $\Pi_{h,\delta} c_{h,\delta}$ in $L^2(\mathscr{D}_{T_{\ast}})$ and, by \eqref{eqn:ml_interp}, $c_{h,\delta}-\Pi_{h,\delta} c_{h,\delta}\to 0$ in this space; hence, the strong limit of $\Pi_{h,\delta} c_{h,\delta}$ is $c$.
\qed

\section{Proof of Theorem~\ref{thm:main_thm_2}}
\label{sec:convergence}

The proof of Theorem~\ref{thm:main_thm_2} involves four main steps which are listed below.
\begin{enumerate}[label= (CA.\arabic*),ref=(CA.\arabic*),leftmargin=\widthof{(CA.4)}+3\labelsep]
	\setlength\itemsep{-0.1em}
    \item\label{ca.1} The domains $A_{h,\delta} := \{(t,x) : x < \ell_{h,\delta}(t), t \in (0,T_{\ast})\}$ converge to $D^\thr_{T_\ast} := \{ (t,x) : x < \ell(t), t \in (0,T_{\ast}) \}$ as defined in Theorem~\ref{thm:main_thm_2}.
    \item\label{ca.2} The limit function $\alpha$ satisfies~\eqref{eqn:weak_vf} with $T = T_{\ast}$.
    \item\label{ca.3} The restricted limit function $\widehat{u}_{\vert{D^\thr_{T_\ast}}}$ satisfies~\eqref{eqn:weak_vel} with $T = T_{\ast}$.
    \item\label{ca.4} The limit function $c_{\vert{D^\thr_{T_\ast}}}$ satisfies~\eqref{eqn:weak_ot} with $T = T_{\ast}$.
\end{enumerate}

\begin{proposition}[Step~\ref{ca.1}]
\label{prop:cv_lh_ind}
The characteristic functions $\pmb{\chi}_{A_{h,\delta}}$ of $A_{h,\delta}$ converge (up to a subsequence) almost everywhere to the characteristic function $\pmb{\chi}_{D^\thr_{T_\ast}}$ of $D^\thr_{T_\ast}$.
\end{proposition}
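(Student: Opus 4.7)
\medskip\noindent\textbf{Proof proposal for Proposition~\ref{prop:cv_lh_ind}.}
The plan is to leverage the almost everywhere convergence $\ell_{h,\delta}\to\ell$ in $(0,T_{\ast})$ already established in Theorem~\ref{thm:main_thm_1}, and turn it into the almost everywhere convergence of the two--dimensional characteristic functions via a slice--by--slice argument combined with the negligibility of the graph of $\ell$. No delicate compactness is required here; the result is essentially a Fubini--type observation.

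First I would fix a representative subsequence (still denoted by $(h,\delta)$) along which $\ell_{h,\delta}(t)\to \ell(t)$ for every $t\in(0,T_{\ast})\setminus N$, where $N$ is a Lebesgue null subset of $(0,T_{\ast})$. For a pair $(t,x)\in\mathscr{D}_{T_{\ast}}$ with $t\notin N$ and $x\ne \ell(t)$, I would argue directly: if $x<\ell(t)$, then $x<\ell_{h,\delta}(t)$ for all $(h,\delta)$ small enough, so $\pmb{\chi}_{A_{h,\delta}}(t,x)=1=\pmb{\chi}_{D_{T_{\ast}}^{\thr}}(t,x)$ eventually; if $x>\ell(t)$, the same reasoning gives both characteristic functions equal to $0$ eventually. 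Hence pointwise convergence holds at every $(t,x)$ outside the exceptional set
\begin{equation*}
E := \bigl(N\times(0,\ell_m)\bigr)\,\cup\,\{(t,\ell(t))\,:\,t\in(0,T_{\ast})\setminus N\}.
\end{equation*}

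The remaining task is to show that $E$ is Lebesgue negligible in $\mathscr{D}_{T_{\ast}}$. The first piece $N\times(0,\ell_m)$ is a null set by Fubini since $N$ has one--dimensional measure zero. The second piece is (a subset of) the graph of $\ell$; since $\ell\in BV(0,T_{\ast})$ by Proposition~\ref{prop:lhd_bv} (and Theorem~\ref{thm:main_thm_1}), it is in particular Lebesgue measurable, so its graph is a measurable subset of $\mathbb{R}^2$ whose $t$--slices are singletons. Another application of Fubini/Tonelli gives that the graph has two--dimensional measure zero. Combining the two pieces shows that $\mathscr{D}_{T_{\ast}}\setminus E$ has full measure and the claimed almost everywhere convergence follows.

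The argument is essentially routine; the only point deserving care is the measurability of $\ell$ used to invoke Fubini on the graph, which however is guaranteed by the $BV$ regularity obtained in the compactness step, so no genuine obstacle is expected.
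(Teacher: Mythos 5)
Your proposal is correct and follows essentially the same route as the paper: both arguments fix a subsequence along which $\ell_{h,\delta}\to\ell$ pointwise outside a null set $N$, show eventual agreement of the characteristic functions at every $(t,x)$ with $t\notin N$ and $x\ne\ell(t)$, and dispose of the exceptional set $\bigl(N\times(0,\ell_m)\bigr)\cup\{(t,\ell(t))\}$ by two applications of Tonelli's theorem. The only cosmetic difference is that you explicitly flag the measurability of $\ell$ (via its $BV$ regularity) as the justification for the graph having measure zero, which the paper leaves implicit.
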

\begin{proof}
Theorem~\ref{thm:main_thm_1} yields a subsequence $\{ \ell_{h,\delta} \}$ (up to re-indexing) such that $\ell_{h,\delta} \rightarrow \ell$ almost everywhere, where $\ell \in BV(0,T_{\ast})$. Define the set $E = \{t \in (0,T_{\ast})\,:\, \ell_{h,\delta}(t) \not\rightarrow \ell(t) \}$. Let $\mu_{d}$ denotes the $d$--dimensional Lebesgue measure. The almost everywhere convergence of $\ell_{h,\delta}(t)$ to $\ell(t)$ implies that $\mu_{1}(E) = 0$. Tonelli's theorem applied to $\pmb{\chi}_{E \times (0,\ell_m)}$ yields $\mu_{2}(E \times (0,{{\ell_m}})) = 0$. Define the graph of $\ell$ as $F_{\ell} = \{ (t,x) \in \mathscr{D}_{T_{\ast}}\,:\,x = \ell(t),\, t\in (0,T_{\ast}) \}$ (see Figure~\ref{fig:lhd_l}). Again an application of the Tonelli's theorem shows  $\mu_{\mathbb{R}^2}(F_{\ell}) = 0$. Let $(t,x) \not\in (E \times (0,{{\ell_m}})) \cup F_{\ell}$. Then, either $\ell(t) > x$ or $\ell(t) < x$.
When $\ell(t) < x$, $\pmb{\chi}_A(t,x) = 0$. Since $(t,x) \not\in E \times (0,{{\ell_m}})$, $\ell_{h,\delta}(t) \rightarrow \ell(t)$. Therefore, for $h$ and $\delta$ small enough $\ell_{h,\delta}(t) < x$. That is, $\pmb{\chi}_{A_{h,\delta}}(t,x) = 0$, and hence $\pmb{\chi}_{A_{h,\delta}}(t,x) \rightarrow \pmb{\chi}_{A}(t,x)$. A similar argument yields the convergence for the case $\ell(t) > x$. Hence we have the almost everywhere convergence $\pmb{\chi}_{A_{h,\delta}} \rightarrow \pmb{\chi}_{A}$. 
\end{proof}
\begin{figure}[htp]
	\includegraphics[scale=1]{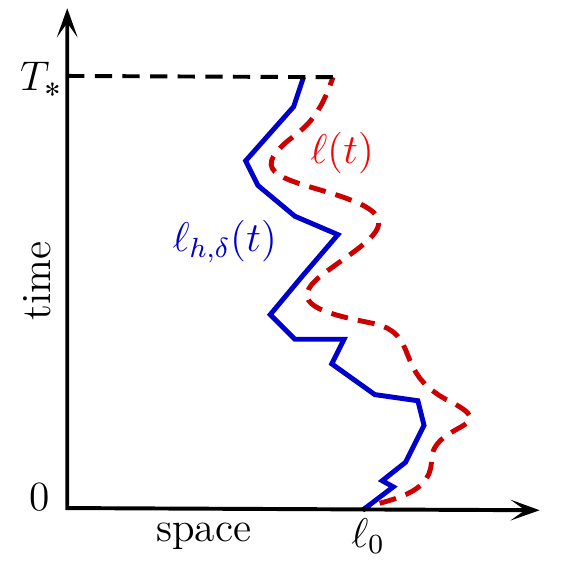}
	\centering
	\caption{Continuous tumour radius $\ell$ and discrete tumour radius $\ell_{h,\delta}$.}
	\label{fig:lhd_l}
\end{figure}

\begin{proposition}[Step~\ref{ca.2}]
\label{prop:alpha_conv}
Let $\alpha : \mathscr{D}_{T_{\ast}} \rightarrow \mathbb{R}$ be a limit provided by Theorem~\ref{thm:main_thm_1} such that $\alpha_{h,\delta} \rightarrow \alpha$ almost everywhere in $\mathscr{D}_{T_{\ast}}$. Then, $\alpha$ satisfies~\eqref{eqn:weak_vf} with $T = T_{\ast}$ for every $\varphi \in \mathscr{C}_c^\infty([0,T_{\ast})\times(0,{{\ell_m}}))$.
\end{proposition}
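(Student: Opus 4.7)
The plan is to derive a discrete weak formulation from the upwind scheme \eqref{eqn:upwind} and pass to the limit in each term. Fix $\varphi \in \mathscr{C}_c^\infty([0,T_\ast) \times (0,\ell_m))$ and set $\varphi_j^n := \varphi(t_n, x_j)$. For $h, \delta$ small enough, $\varphi$ vanishes in a neighborhood of $\{T_\ast\} \times [0,\ell_m]$, $(0,T_\ast) \times \{0\}$, and $(0,T_\ast) \times \{\ell_m\}$, so the corresponding discrete boundary contributions are zero. I multiply \eqref{eqn:upwind} by $h \delta \varphi_j^{n-1}$ and sum over $j = 0, \ldots, J-1$ and $n = 1, \ldots, N_\ast$.

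For the time-derivative piece, discrete Abel summation gives
\begin{equation*}
\sum_{n=1}^{N_\ast} \sum_j h (\alpha_j^n - \alpha_j^{n-1}) \varphi_j^{n-1} = -\sum_{n=1}^{N_\ast-1}\sum_j h \delta\, \alpha_j^n\, \frac{\varphi_j^n - \varphi_j^{n-1}}{\delta} - \sum_j h\, \alpha_j^0\, \varphi_j^0,
\end{equation*}
which is a Riemann-sum approximation of $-\iint_{\mathscr{D}_{T_\ast}} \alpha_{h,\delta} \partial_t \varphi\,\mathrm{d}t\,\dx - \int_0^{\ell_m} \alpha_0^{\mathrm{e}}(x) \varphi(0,x)\,\dx$. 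Likewise, setting $F_j^{n-1} := u_j^{(n-1)+} \alpha_{j-1}^{n-1} - u_j^{(n-1)-} \alpha_j^{n-1}$ and using $u_0^{n-1} = 0$ together with the support of $\varphi$, summation by parts in $j$ yields $\sum_j \delta (F_{j+1}^{n-1} - F_j^{n-1}) \varphi_j^{n-1} = -\sum_j \delta F_j^{n-1}(\varphi_j^{n-1} - \varphi_{j-1}^{n-1})$. The algebraic identity $F_j^{n-1} = u_j^{n-1} \alpha_j^{n-1} + u_j^{(n-1)+}(\alpha_{j-1}^{n-1} - \alpha_j^{n-1})$ splits this further into a centered-flux piece, consistent with $\iint u_{h,\delta} \alpha_{h,\delta} \partial_x \varphi$, and a numerical-viscosity piece.

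Next I pass to the limit term by term. The time-derivative and initial-data contributions converge using the a.e.\ (and $L^\infty$-bounded) convergence $\alpha_{h,\delta} \to \alpha$ from Theorem~\ref{thm:main_thm_1}, the smoothness of $\varphi$, and dominated convergence, together with $\alpha^0_j\to \alpha_0^{\mathrm{e}}$ in $L^1(0,\ell_m)$. For the centered-flux piece, I write $u_{h,\delta} = \widehat{u}_{h,\delta} \pmb{\chi}_{A_{h,\delta}}$ everywhere on $\mathscr{D}_{T_\ast}$: Theorem~\ref{thm:main_thm_1} yields $\widehat{u}_{h,\delta} \halfarrow \widehat{u}$ weakly in $L^2(\mathscr{D}_{T_\ast})$ with a uniform $L^\infty$-bound \eqref{eqn:u_bound}, while $\alpha_{h,\delta} \to \alpha$ strongly in $L^2(\mathscr{D}_{T_\ast})$ (by dominated convergence from the a.e.\ convergence and the $L^\infty$-bound) and $\pmb{\chi}_{A_{h,\delta}} \to \pmb{\chi}_{D_{T_\ast}^\thr}$ strongly in $L^2(\mathscr{D}_{T_\ast})$ by Proposition~\ref{prop:cv_lh_ind}; the product therefore converges weakly to $\widehat{u}\alpha\,\pmb{\chi}_{D_{T_\ast}^\thr} = u\alpha$. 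The numerical-viscosity piece is dominated by $\|\partial_x\varphi\|_\infty \|u_{h,\delta}\|_\infty \cdot \delta h \sum_n \sum_j |\alpha_j^{n-1} - \alpha_{j-1}^{n-1}|$, which by the uniform spatial BV bound of Proposition~\ref{prop:alpha_sbv} is $O(h)$ and vanishes. For the reaction contributions, the maps $c \mapsto (1+s_1)c/(1+s_1 c)$ and $c \mapsto (s_2+s_3 c)/(1+s_4 c)$ are Lipschitz on $[0,1]$, so the strong $L^2$ convergence of $\Pi_{h,\delta} c_{h,\delta} \to c$ combined with the (bounded) a.e.\ convergence $\alpha_{h,\delta}\to\alpha$ and the continuity of $x \mapsto (x-\ath)^+$ give, via dominated convergence, the required limit $\iint_{\mathscr{D}_{T_\ast}} (\alpha - \ath)^+ f(\alpha,c)\,\varphi\,\mathrm{d}t\,\dx$ (the time-index shift between $\alpha_j^{n-1}$ in the $b$-term and $\alpha_j^n$ in the $d$-term is immaterial in the limit $\delta\to 0$).

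The main obstacle is the nonlinear transport term $u\alpha$, where $\widehat{u}_{h,\delta}$ is only known to converge weakly. The resolution is the factorization $u_{h,\delta} = \widehat{u}_{h,\delta}\pmb{\chi}_{A_{h,\delta}}$ together with the strong compactness of $\pmb{\chi}_{A_{h,\delta}}$ supplied by Proposition~\ref{prop:cv_lh_ind}, which upgrades the product to a weak--strong situation in $L^2$ and simultaneously explains why $u$ in \ref{ts.3} is extended by $0$ outside $D_{T_\ast}^\thr$. A secondary point requiring care is that the spatial BV bound on $\alpha_{h,\delta}$ is what kills the upwind numerical viscosity, so the argument genuinely relies on Proposition~\ref{prop:alpha_sbv}, and thereby on the CFL condition \eqref{eqn:cfl_condition} through which that estimate was obtained.
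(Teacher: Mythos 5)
Your proposal is correct and follows essentially the same route as the paper: discrete summation by parts in time and space, an algebraic split of the upwind flux into a centred part plus a numerical-viscosity part killed by the spatial BV bound of Proposition~\ref{prop:alpha_sbv}, the factorisation $u_{h,\delta}=\pmb{\chi}_{A_{h,\delta}}\widehat{u}_{h,\delta}$ combined with weak--strong convergence for the transport term, and dominated convergence for the reaction terms. The only difference is cosmetic — you split the flux via $F_j = u_j\alpha_j + u_j^{+}(\alpha_{j-1}-\alpha_j)$ while the paper uses the symmetric identity \eqref{eqn:split_id_1} — and both lead to the same limit identification.
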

\begin{proof}
Let $\varphi\in \mathscr{C}_c^{\infty} ([0,T_{\ast})\times (0,{{\ell_m}}))$. Multiply~\eqref{eqn:upwind} between $t_{n+1}$ and $t_n$ by $\varphi_{j}^{n} := \langle \varphi(n\delta,\cdot) \rangle_{\mathcal{X}_j}$ and sum over the indices to obtain $T_1 + T_2 = T_3$, where 
 \begin{align}
 T_1 &:= h\sum_{n=0}^{N_{\ast}-1} \sum_{j=0}^{J-1} (\ats{n+1}{j} - \ats{n}{j}) \varphi_{j}^{n}, \\
 T_2 &:= \delta \sum_{n=0}^{N_{\ast}-1} \sum_{j=0}^{J-1}  \left( \up{n}{j+1}\ats{n}{j} - \um{n}{j+1}\ats{n}{j+1} - \up{n}{j}\ats{n}{j-1} + \um{n}{j}\ats{n}{j} \right)\varphi_{j}^{n}, \text{ and } \\
 T_3 &:= h\delta \sum_{n=0}^{N_{\ast}-1} \sum_{j=0}^{J-1} \left( (\ats{n}{j} - \ath)^{+}(1-\ats{n}{i})b_{j}^n - (\ats{n+1}{j} - \ath)^{+}d_j^n \right) \varphi_{j}^{n},
\end{align}
 with $N_{\ast} = T_{\ast}/\delta$. The fact $\varphi^{N_\ast}_j=0$ for all $j$ and a use of \eqref{eqn:disc_int_parts} yield
\begin{align}
      T_1 = -h\sum_{n=0}^{N_{\ast}-1} \sum_{j=0}^{J-1} (\varphi_{j}^{n+1} - \varphi_{j}^{n})\ats{n+1}{j} -  \int_{0}^{\ell_0} \alpha^0_h(x) \varphi(0,x)\,\mathrm{d}x 
      \label{eqn:ca2-t1}
\end{align}
where $\alpha_h^0$ is a piecewise constant function defined by $\alpha^0_{h\vert{\mathcal{X}_j}} = \langle \alpha_0 \rangle_{\mathcal{X}_j}$ for $j = 0,\ldots,J-1$ (see Discrete scheme~\ref{defn:semi_disc_soln}). A direct calculation shows the first term in the right hand side of~\eqref{eqn:ca2-t1} is equal to
\begin{align}
-\sum_{n=0}^{N_{\ast}-1} \sum_{j=0}^{J-1} \alpha_{j}^{n+1} \int_{\mathcal{X}_j} \int_{n\delta}^{(n+1)\delta} \partial_{t} \varphi(t,x)\,\mathrm{d}t = -\int_{0}^{\ell_m} \int_{\delta}^{T_{\ast} + \delta} \alpha_{h,\delta}(t,x) \partial_{t} \varphi(t-\delta,x)\,\mathrm{d}t\,\mathrm{d}x. \nonumber 
\end{align}
Since $\alpha_{h,\delta} \rightarrow \alpha$ almost everywhere (see Theorem~\ref{thm:main_thm_1}) as $h,\delta\to 0$, a use of Lebesgue's dominated convergence theorem shows that the first term in the right hand side of~\eqref{eqn:ca2-t1} converges to
 $-\int_{0}^{{\ell_m}} \int_{0}^{T_{\ast}} \alpha(t,x) \partial_{t} \varphi(t,x) \,\mathrm{d}t\,\mathrm{d}x.$

Since $\alpha_h^0 \rightarrow \alpha_0$ in $L^2(0,\ell_0)$, the second term in the right hand side of~\eqref{eqn:ca2-t1} converges to $-\int_{0}^{\ell_0} \alpha_0(x) \varphi(0,x)\,\mathrm{d}x$. An application of~\eqref{eqn:split_id_1} on $T_2$ yields
\begin{align}
T_2  ={}&   \delta \sum_{n=0}^{N_{\ast}-1} \sum_{j=0}^{J-1} \varphi_{j}^{n} \left( |u_{j+1}^n|\dfrac{\ats{n}{j} - \ats{n}{j+1}}{2} - |u_{j}^n|\dfrac{\ats{n}{j-1} -\ats{n}{j}}{2} \right) \nonumber \\
&+ \delta \sum_{n=0}^{N_{\ast}-1} \sum_{j=0}^{J-1} \varphi_{j}^{n} \left( u_{j+1}^n\dfrac{\ats{n}{j} + \ats{n}{j+1}}{2} - u_{j}^n\dfrac{\ats{n}{j-1} + \ats{n}{j}}{2} \right) \nonumber =: T_{21} + T_{22}.
\end{align}
 A use of $u_{0}^n = 0$ and $u_{J}^n = 0$ leads to
\begin{align}
|T_{21}| ={}&  \left| \delta \sum_{n=0}^{N_{\ast}-1} \sum_{j=0}^{J-2} (\varphi_{j}^{n} - \varphi_{j+1}^n) |u_{j+1}^n|\dfrac{\ats{n}{j} - \ats{n}{j+1}}{2}\right| \\
&\le \dfrac{h}{2} ||u_{h,\delta}||_{L^\infty(\mathscr{D}_{T_{\ast}})} ||\partial_x \varphi(t,x)||_{L^\infty(\mathscr{D}_{T_{\ast}})} \sum_{n=0}^{N_{\ast}-1} \delta \sum_{j=0}^{J-2} |\ats{n}{j} - \ats{n}{j+1}|,
\end{align}
and hence~\eqref{eqn:u_bound} and~\eqref{eqn:alpha_sbv} yield $|T_{21}| \rightarrow 0$ as $h \rightarrow 0$. Use \eqref{eqn:disc_int_parts} and $u_{0}^n = 0$ and $\varphi_{J}^n = 0$ to obtain
\begin{align}
T_{22} = -\delta \sum_{n=0}^{N_{\ast}-1} \sum_{j=0}^{J-1} (\varphi_{j+1}^n - \varphi_j^n) u_{j+1}^n \dfrac{\alpha_{j}^n + \alpha_{j+1}^n}{2}. 
\label{eqn:ii1_3}
\end{align}
Add and subtract $\delta \sum_{n=0}^{N_{\ast}-1} \sum_{j=0}^{J-1} (\varphi_{j+1}^n - \varphi_j^n) \frac{u_{j}^n}{2} \alpha_{j}^n$ to~\eqref{eqn:ii1_3} to obtain
\begin{align}
T_{22} ={}& \delta \sum_{n=0}^{N_{\ast}-1} \sum_{j=0}^{J-1} \dfrac{u_{j+1}^n\alpha_{j+1}^n}{2} (\varphi_{j+1}^n - \varphi_{j}^n - \varphi_{j+2}^n + \varphi_{j+1}^n) \nonumber \\
&- \delta\sum_{n=0}^{N_{\ast}-1} \sum_{i=0}^{J-1} (\varphi_{j+1}^n - \varphi_j^n) \dfrac{u_{j+1}^n + u_{j}^{n}}{2} \alpha_{j}^n 
\label{eqn:ca2-ta2}
\end{align}
We show that the first term on the right hand side of~\eqref{eqn:ca2-ta2} converges to zero. A use of the definition of $\varphi_{j}^n$, mean value theorem, and the CFL condition~\eqref{eqn:cfl_condition} yields
\begin{align} 
&  \left |\delta \sum_{n=0}^{N_{\ast}-1} \sum_{j=0}^{J-1} \dfrac{u_{j+1}^n\alpha_{j+1}^n}{2} (\varphi_{j+1}^n - \varphi_{j}^n - \varphi_{j+2}^n +  \varphi_{j+1}^n) \right| \nonumber \\
&\lesssim \delta ||u_{h,\delta}\alpha_{h,\delta}||_{L^{\infty}(\mathscr{D}_{T_{\ast}})} ||\partial_{xx} \varphi||_{L^\infty(\mathscr{D}_{T_{\ast}})} \sum_{n=0}^{N_{\ast}-1}\delta \sum_{j=0}^J h \rightarrow 0 \text{ as } \delta \rightarrow 0, \nonumber
\end{align}
where $\mathscr{C}_g$ is a constant independent of $h$ and $\delta$. Define $\partial_{h,\delta} \varphi : \mathscr{D}_{T_{\ast}} \rightarrow \mathbb{R}$ by $\partial_{h,\delta} \varphi := (\varphi_{j+1}^n - \varphi_j^n)/h$ on $\mathcal{T}_n \times \mathcal{X}_j$. Use the fact $u_{h,\delta} = \pmb{\chi}_{A_{h,\delta}}\widehat{u}_{h,\delta}$ and the trapezoidal quadrature rule on the piecewise linear function $u_{h,\delta}$ to express the second term in the right hand side of~\eqref{eqn:ca2-ta2} as
\begin{align}
-\int_{0}^{T_{\ast}}\int_{0}^{{{\ell_m}}} u_{h,\delta}\alpha_{h,\delta}\partial_{h,\delta}\varphi\,\mathrm{d}x\,\mathrm{d}t  ={}& -\int_{0}^{T_{\ast}}\int_{0}^{{{\ell_m}}} \pmb{\chi}_{A_{h,\delta}}\widehat{u}_{h,\delta}\alpha_{h,\delta}\partial_{h,\delta}\varphi\,\mathrm{d}x\,\mathrm{d}t \\
&\rightarrow -\int_{0}^{T_{\ast}}\int_{0}^{{{\ell_m}}} u\,\alpha\,\partial_{x}\varphi\,\mathrm{d}x\,\mathrm{d}t,
\end{align}
where Lemmas~\ref{appen_id.e} and \ref{appen_id.f} are applied in the last step. Write  $T_{3}$ as 
\begin{small}
\begin{align}
T_3 ={}& h\delta \sum_{n=0}^{N_{\ast}-1} \sum_{j=0}^{J-1}  (\ats{n}{j} - \ath)^{+}(1-\ats{n}{j}) b_{j}^n\varphi_{j}^{n} - h\delta \sum_{n=0}^{N_{\ast}-1} \sum_{j=0}^{J-1}  (\ats{n+1}{j} - \ath)^{+}d_j^n\varphi_{j}^{n}. 
\label{eqn:ca2-t3}
\end{align}
\end{small}
\noindent Use definitions of $b_{j}^n$, $d_{j}^n$, and $\varphi_{j}^n$ to rewrite the first term in the right hand side of~\eqref{eqn:ca2-t3} and use Lemmas~\ref{appen_id.e} and \ref{appen_id.f} (see Appendix~\ref{appen_B}) to arrive at the following convergence
\begin{gather}
\int_{0}^{T_{\ast}} \int_{0}^{{\ell_m}} (\alpha_{h,\delta}(t,x) - \ath)^{+}(1 - \alpha_{h,\delta}(t,x)) \dfrac{(1 + s_1)\Pi_{h,\delta}c_{h,\delta}(t,x)}{1 + s_1\Pi_{h,\delta}c_{h,\delta}(t,x)} \varphi(t,x)\,\mathrm{d}x\,\mathrm{d}t \nonumber \\
 \rightarrow \int_{0}^{T}\int_{0}^{{\ell_m}} (\alpha - \ath)^{+}(1 - \alpha) \dfrac{(1 + s_1)c}{1 + s_1c} \varphi\,\mathrm{d}x\,\mathrm{d}t. \nonumber
\end{gather}
A similar argument shows that the second term in the right hand side of~\eqref{eqn:ca2-t3} converges to $-\int_{0}^{T}\int_{0}^{{\ell_m}} (\alpha - \ath)^{+} \frac{s_2 + s_3c}{1 + s_1c} \varphi\,\mathrm{d}x\,\mathrm{d}t.$ Plugging the above in $T_1+T_2=T_3$ concludes the proof.
\end{proof}
\begin{proposition}[Step~\ref{ca.3}]
	\label{prop:u_conv}
	Let $\widehat{u} : \mathscr{D}_{T_\ast} \rightarrow \mathbb{R}$ be a limit provided by Theorem~\ref{thm:main_thm_1} such that $\widehat{u}_{h,\delta} \halfarrow \widehat{u}$ weakly in $L^2(\mathscr{D}_{T_\ast})$ and $\partial_{x}\widehat{u}_{h,\delta} \halfarrow \partial_x \widehat{u}$ weakly in $L^2(\mathscr{D}_{T_\ast})$. Then, for every $v \in \Hdx{u}(D_{T}^{\thr})$ such that $v(\cdot,0) = 0$,  $\widehat{u}_{\vert{D_{T}^{\thr}}}$ satisfies~\eqref{eqn:weak_vel}.
\end{proposition}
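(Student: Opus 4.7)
The plan is to test the discrete variational equation~\eqref{eqn:dweak_vel} against a suitable discrete approximation of a smooth test function, sum over time, pass to the limit using the compactness results of Theorem~\ref{thm:main_thm_1} and Proposition~\ref{prop:cv_lh_ind}, and finish by density. Fix $\varphi \in \mathscr{C}^{\infty}([0,T_\ast]\times[0,\ell_m])$ with $\varphi(\cdot,0) = 0$, and for each $0 \le n < N_\ast$ set $v_h^n := \mathcal{I}_h \varphi(t_n, \cdot)_{\vert\Omega_h^n}$, where $\mathcal{I}_h$ is defined in~\eqref{eqn:lin_interp}. The condition $\varphi(\cdot,0)=0$ ensures $v_h^n \in \mathcal{S}_{0,h}^n$. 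Let $v_{h,\delta}$ denote the corresponding time reconstruct, extended by $0$ on $\mathscr{D}_{T_\ast}\setminus \overline{A_{h,\delta}}$; setting $\varphi_\delta(t,\cdot):=\varphi(t_n,\cdot)$ on $\mathcal{T}_n$, one has $v_{h,\delta} = \pmb{\chi}_{A_{h,\delta}}\,\mathcal{I}_{h,\delta}\varphi_\delta$ a.e.

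Testing~\eqref{eqn:dweak_vel} with $v_h^n$, multiplying by $\delta$, summing over $n$, and using $u_{h,\delta}=\pmb{\chi}_{A_{h,\delta}}\widehat{u}_{h,\delta}$ together with $\partial_x u_{h,\delta}=\pmb{\chi}_{A_{h,\delta}}\partial_x \widehat{u}_{h,\delta}$ a.e., produces the identity
\begin{multline*}
k \int_{\mathscr{D}_{T_\ast}} \frac{\alpha_{h,\delta}}{1-\alpha_{h,\delta}}\,\pmb{\chi}_{A_{h,\delta}}\,\widehat{u}_{h,\delta}\,(\mathcal{I}_{h,\delta}\varphi_\delta)\,\dx\,\mathrm{d}t \\ + \mu \int_{\mathscr{D}_{T_\ast}} \alpha_{h,\delta}\,\pmb{\chi}_{A_{h,\delta}}\,\partial_x \widehat{u}_{h,\delta}\,\partial_x (\mathcal{I}_{h,\delta}\varphi_\delta)\,\dx\,\mathrm{d}t = \int_{\mathscr{D}_{T_\ast}} \mathscr{H}(\alpha_{h,\delta})\,\pmb{\chi}_{A_{h,\delta}}\,\partial_x(\mathcal{I}_{h,\delta}\varphi_\delta)\,\dx\,\mathrm{d}t.
\end{multline*}

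The passage to the limit is a weak--strong pairing. Theorem~\ref{thm:main_thm_1} provides $\widehat{u}_{h,\delta}\halfarrow\widehat{u}$ and $\partial_x \widehat{u}_{h,\delta}\halfarrow\partial_x\widehat{u}$ weakly in $L^2(\mathscr{D}_{T_\ast})$; Proposition~\ref{prop:dalpha_max} yields the uniform bound $a_\ast\le\alpha_{h,\delta}\pmb{\chi}_{A_{h,\delta}}\le a^\ast<1$, so that $\alpha_{h,\delta}$, $\alpha_{h,\delta}/(1-\alpha_{h,\delta})\,\pmb{\chi}_{A_{h,\delta}}$, and $\mathscr{H}(\alpha_{h,\delta})$ converge a.e.\ and thus (by dominated convergence) strongly in every $L^p(\mathscr{D}_{T_\ast})$, $p<\infty$. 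Standard estimates on $\mathcal{I}_h$ give $\mathcal{I}_{h,\delta}\varphi_\delta\to\varphi$ and $\partial_x \mathcal{I}_{h,\delta}\varphi_\delta\to\partial_x\varphi$ uniformly on $\mathscr{D}_{T_\ast}$, while Proposition~\ref{prop:cv_lh_ind} gives $\pmb{\chi}_{A_{h,\delta}}\to\pmb{\chi}_{D_{T_\ast}^\thr}$ a.e., hence in every $L^p$. Combining these via H\"older yields strong $L^2(\mathscr{D}_{T_\ast})$ convergence of the smooth coefficient--times--test-function factors; paired with the weak convergence of $\widehat{u}_{h,\delta}$ and $\partial_x \widehat{u}_{h,\delta}$, this passes the two left-hand integrals to the limit, and dominated convergence handles the right-hand side directly. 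The limit is exactly~\eqref{eqn:weak_vel} tested against $v=\varphi_{\vert D_{T_\ast}^\thr}$.

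The principal technical obstacle is precisely this weak--strong pairing on the moving domain: the indicator $\pmb{\chi}_{A_{h,\delta}}$ has a jump whose location moves and so does not converge in any $H^1$-type norm, which forces the splitting into ``weak $L^2$ limit of $\widehat{u}_{h,\delta}$ (or its derivative)'' times ``strong $L^2$ limit of everything else''; the latter genuinely requires Proposition~\ref{prop:cv_lh_ind}. The closing step is density: the identity above extends to every $v\in\Hdx{u}(D_{T_\ast}^\thr)$ because (i) both sides of~\eqref{eqn:weak_vel} are continuous $\Hdx{u}(D_{T_\ast}^\thr)$-linear forms in $v$ (since $\widehat{u}\in L^2(0,T_\ast;H^1(0,\ell_m))$, while $\mathscr{H}(\alpha)$ and $\alpha/(1-\alpha)\pmb{\chi}_{D_{T_\ast}^\thr}$ are in $L^\infty$), and (ii) restrictions of $\mathscr{C}^\infty([0,T_\ast]\times[0,\ell_m])$ functions vanishing at $x=0$ form a dense subspace of $\Hdx{u}(D_{T_\ast}^\thr)$. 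The low regularity of $\ell$ (merely $BV$) prevents invoking a Lipschitz-domain extension theorem, but since $D_{T_\ast}^\thr$ is the subgraph of a scalar function density follows by truncating along $x=\ell(t)$ (where no trace is imposed) and mollifying.
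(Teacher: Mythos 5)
Your proposal is correct and follows essentially the same route as the paper: test \eqref{eqn:dweak_vel} with the nodal interpolant of a smooth function vanishing at $x=0$, multiply by $\delta\pmb{\chi}_{A_{h,\delta}}$, sum in time, pass to the limit by pairing the weak $L^2$ convergence of $\widehat{u}_{h,\delta}$ and $\partial_x\widehat{u}_{h,\delta}$ against the strong/a.e.\ convergence of $\alpha_{h,\delta}$, $\mathscr{H}(\alpha_{h,\delta})$, $\pmb{\chi}_{A_{h,\delta}}$ and the interpolated test function, then conclude by density. The only difference is that you spell out the density step and the role of Proposition~\ref{prop:cv_lh_ind} in slightly more detail than the paper does.
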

\begin{proof}
Let $v \in \mathscr{C}^\infty(D_{T_\ast}^\thr)$ with $v(\cdot,0) = 0$. Redefine $v$ to be a smooth extension to $\mathscr{D}_{T_{\ast}}$ for ease of notation. Define $v_{h,\delta}(t,x) = \mathcal{I}_h v (t_n,x)$ on $\mathcal{T}_n \times \mathcal{X}_j$ for $n,j \ge 0$. The piecewise linear in space and piecewise constant in time function $v_{h,\delta}$ satisfies $v_{h,\delta} \rightarrow v$ and $\partial_x v_{h,\delta} \rightarrow \partial_x v$ strongly in $L^2(\mathscr{D}_{T_{\ast}})$.

Take the test function as $v_{h,\delta}(t_n,\cdot)$ in~\eqref{eqn:dweak_vel}, multiply with $\delta \pmb{\chi}_{A_{h,\delta}}(t_n,\cdot)$, use the fact that $u_{h,\delta} = \pmb{\chi}_{A_{h,\delta}}\widehat{u}_{h,\delta}$, and sum over $n = 1,\ldots,N_{\ast}-1$ to obtain $T_1 + T_2 = T_3$, where 
		\begin{align}
		T_1 &:= \int_0^{T_\ast} \int_{0}^{\ell_m} \pmb{\chi}_{A_{h,\delta}} \dfrac{k \alpha_{h,\delta}}{1 - \alpha_{h,\delta}} \widehat{u}_{h,\delta}v_{h,\delta}\,\mathrm{d}x\mathrm{d}t, \\
		T_2 &:= \int_0^{T_\ast} \int_{0}^{\ell_m} \pmb{\chi}_{A_{h,\delta}} \mu \alpha_{h,\delta} \partial_x \widehat{u}_{h,\delta}\partial_{x} v_{h,\delta}\,\mathrm{d}x\mathrm{d}t,  \text{ and }\\
		T_3 &:= \int_0^{T_\ast} \int_{0}^{\ell_m} \pmb{\chi}_{A_{h,\delta}} \mathscr{H}({\alpha_{h,\delta}}) \partial_x v_{h,\delta} \mathrm{d}x\mathrm{d}t.
		\end{align}
		
		\noindent We have $\pmb{\chi}_{A_{h,\delta}} \rightarrow  \pmb{\chi}_{D_{T_\ast}^\thr}$ almost everywhere and $\alpha_{h,\delta} \rightarrow  \alpha$ in $L^2(\mathscr{D}_{T_{\ast}})$. Therefore, Lemmas~\ref{appen_id.e} and \ref{appen_id.f}  show that
		\[
		T_1 \rightarrow \int_{0}^{T_{\ast}}\int_{0}^{{\ell_m}}  \pmb{\chi}_{D_{T_\ast}^\thr} \dfrac{k \alpha}{1 - \alpha} \widehat{u} v\,\mathrm{d}x\mathrm{d}t = \iint_{D_{T_\ast}^\thr} \dfrac{k \alpha}{1 - \alpha} u v\,\mathrm{d}x\mathrm{d}t.
		\]
		A similar argument for $T_2$ shows that
		\[
		T_2 \rightarrow \int_{0}^{T_{\ast}}\int_{0}^{{\ell_m}}  \pmb{\chi}_{D_{T_\ast}^\thr}\,\mu\,\alpha\,\partial_x \widehat{u} \,\partial_x v\,\mathrm{d}x\mathrm{d}t = \iint_{D_{T_\ast}^\thr} \mu \alpha \partial_x u\,\partial_x v\,\mathrm{d}x\mathrm{d}t.
		\]
		Since $\mathscr{H}$ is continuous, $\mathscr{H}(\alpha_{h,\delta}) \rightarrow \mathscr{H}(\alpha)$ almost everywhere in $\mathscr{D}_{T_{\ast}}$. Therefore,
		\[
		T_3\rightarrow \int_{0}^{T_{\ast}}\int_{0}^{{\ell_m}} \pmb{\chi}_{D_{T_\ast}^\thr} \mathscr{H}(\alpha) \partial_x v \mathrm{d}x\mathrm{d}t
	= \iint_{D_{T_\ast}^\thr} \mathscr{H}(\alpha) \partial_x v \mathrm{d}x\mathrm{d}t.
		\]
These convergences, the relation $T_1 + T_2 = T_3$, and the density of $\mathscr{C}^\infty(D_{T_\ast}^\thr)$ in $\Hdx{u}(D_{T}^{\thr})$  yield the desired result.
\end{proof}

To establish~\eqref{eqn:weak_ot} we start with a definition and a covering lemma.

\begin{figure}[h!]
    \centering
    \includegraphics[scale=0.7]{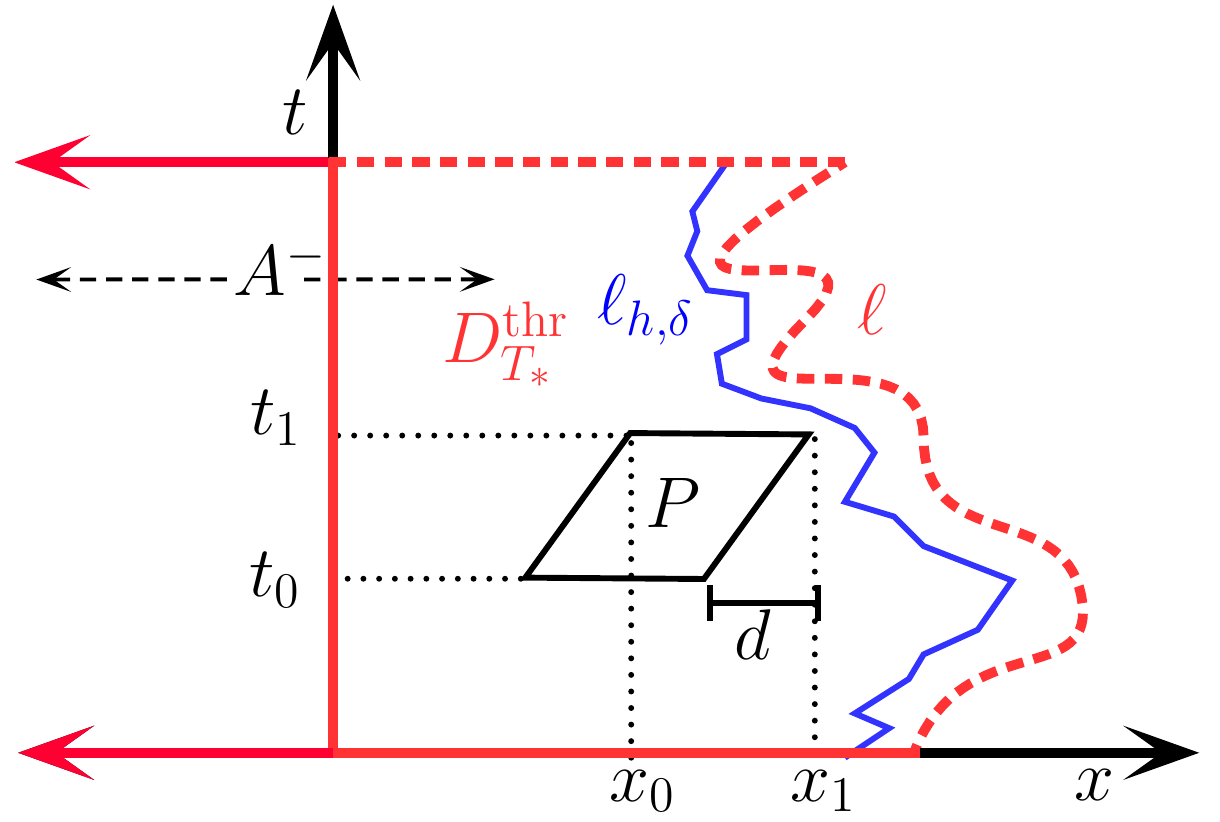}
    \caption{The domain $A$ and $A^{-}$ are the geometries described in Lemma~\ref{lemma:covering}, and $P$ is a right--leaning parallelogram, and $d=(\rho\mathscr{C}_{\CFL})^{-1}(t_1-t_0)$.}
    \label{fig:left_sln_p}
\end{figure}
\begin{lemma}[Covering lemma]
\label{lemma:covering}
For $x_0 < x_1$ and $t_0 < t_1$, let  
\begin{equation}
P := \bigcup_{t_0 \leq t \leq t_1} \{t\} \times [x_0 - (\rho\mathscr{C}_{\CFL})^{-1}(t_1 - t),x_1 - (\rho\mathscr{C}_{\CFL})^{-1}(t_1 - t)]
\label{eqn:left_slanted}
\end{equation}
be a right--leaning parallelogram (see Figure~\ref{fig:left_sln_p}) contained in $A^{-} := D_{T_\ast}^\thr\cup(\{0\}\times[0,\ell(0))\cup([0,T)\times\mathbb{R}^{-})$. Then, there exists an $ h_P>0$ and a $\delta_P>0$ such that, for every $h \leq h_P$ and $\delta \leq \delta_P$, $P \subset A_{h,\delta}^{-}:= A_{h,\delta}\cup(\{0\}\times[0,\ell(0))\cup([0,T)\times\mathbb{R}^{-})$.
\end{lemma}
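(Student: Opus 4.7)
The slanted sides of the parallelogram $P$ have slope $(\rho\mathscr{C}_{\CFL})^{-1}$, which matches precisely the rate controlling the decreasing part $\ell_{h,\delta,D}$ furnished by Proposition~\ref{prop:lhd_bv}. The plan is to exploit this matching to collapse the whole inclusion $P\subset A_{h,\delta}^{-}$ to a single scalar inequality at $t=t_1$, and then close by convergence of $\ell_{h,\delta}$ at that point.

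First, I would note that the time-slice of $P$ at $t\in[t_0,t_1]$ is an interval with right endpoint $x_r(t) := x_1 - (\rho\mathscr{C}_{\CFL})^{-1}(t_1-t)$, and that any $(t,x)$ with $x<0$ is automatically in $A_{h,\delta}^{-}$ via the factor $[0,T)\times\mathbb{R}^{-}$; so it suffices to verify $x_r(t) < \ell_{h,\delta}(t)$ at those $t$ where $x_r(t)\ge 0$. Next, I would invoke the monotonicity from Proposition~\ref{prop:lhd_bv}: the map $t\mapsto \ell_{h,\delta,D}(t) := \ell_{h,\delta}(t)-(\rho\mathscr{C}_{\CFL})^{-1}t$ is decreasing on $[0,T_\ast]$, which yields the lower envelope
\[
\ell_{h,\delta}(t) \;\ge\; \ell_{h,\delta}(t_1) - (\rho\mathscr{C}_{\CFL})^{-1}(t_1-t),\qquad t\in[0,t_1].
\]
Consequently, once we know $\ell_{h,\delta}(t_1) > x_1$, the full inclusion is immediate because
\[
x_r(t) \;=\; x_1 - (\rho\mathscr{C}_{\CFL})^{-1}(t_1-t) \;<\; \ell_{h,\delta}(t_1) - (\rho\mathscr{C}_{\CFL})^{-1}(t_1-t) \;\le\; \ell_{h,\delta}(t).
\]

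Finally, I would establish the scalar inequality at $t_1$. Since $t_0<t_1$ forces $t_1>0$, and in the non-trivial case $x_1\ge 0$ the corner $(t_1,x_1)\in P\subset A^{-}$ must lie in $D_{T_\ast}^{\thr}$, we obtain $x_1<\ell(t_1)$ with strict margin $\eta := \ell(t_1) - x_1 > 0$. Because $\{\ell_{h,\delta}\}$ is uniformly bounded in $BV(0,T_\ast)$ by Proposition~\ref{prop:lhd_bv}, Helly's selection theorem (which is the very engine of the convergence stated in Theorem~\ref{thm:main_thm_1}) actually delivers pointwise convergence along the extracted subsequence; evaluating at $t_1$ gives $\ell_{h,\delta}(t_1)\to\ell(t_1)$, and hence $\ell_{h,\delta}(t_1) > x_1 + \eta/2$ for all $h\le h_P$ and $\delta\le\delta_P$ with $h_P,\delta_P$ small enough.

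The principal subtlety will be bridging the gap between the almost-everywhere convergence explicitly written in Theorem~\ref{thm:main_thm_1} and the pointwise value needed at the single time $t=t_1$. I would resolve this either by applying Helly directly to the uniformly BV sequence $\{\ell_{h,\delta}\}$ and extracting a further subsequence converging pointwise everywhere (the limit necessarily agreeing with $\ell$ a.e., hence being $\ell$ after reindexing), or, when $t_1$ happens to be a jump point, by picking a continuity point $t_1'>t_1$ of the decreasing function $g(t)=\ell(t)-(\rho\mathscr{C}_{\CFL})^{-1}t$ (such points are dense since $g$ has at most countably many jumps) and using the same monotonicity $\ell_{h,\delta}(t_1)\ge\ell_{h,\delta}(t_1')-(\rho\mathscr{C}_{\CFL})^{-1}(t_1'-t_1)$ to transfer the bound back to $t_1$. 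Apart from this delicacy, the argument is just the geometric bookkeeping dictated by the CFL-matched slope of $P$.
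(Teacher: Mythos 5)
Your proposal is correct and follows essentially the same route as the paper: reduce the inclusion to the single scalar inequality $\ell_{h,\delta}(t_1)>x_1$, obtain it from $x_1<\ell(t_1)$ and the convergence of $\ell_{h,\delta}$ at (or arbitrarily near) $t_1$, and then propagate it backwards over $[t_0,t_1]$ using the monotonicity of $\ell_{h,\delta,D}=\ell_{h,\delta}-(\rho\mathscr{C}_{\CFL})^{-1}t$ from Proposition~\ref{prop:lhd_bv}, exactly matching the slope of $P$. Your explicit treatment of the almost-everywhere-versus-pointwise issue at $t=t_1$ is in fact more careful than the paper's ``without loss of generality'' shortcut.
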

\begin{proof}
  From~\eqref{eqn:left_slanted} and $P \subset A^{-}$, we have $\ell(t_1) > x_1 + \epsilon$ for some $\epsilon>0$. Without loss of generality, assume that $\ell_{h,\delta}(t_1) \rightarrow \ell(t_1)$ or consider a $\widetilde{t}_1$ arbitrarily close to $t_1$ such that $\ell_{h,\delta}(\widetilde{t}_1) \rightarrow \ell(\widetilde{t}_1)$. The existence of $\widetilde{t}_1$ is guaranteed by the fact that $\ell_{h,\delta} \rightarrow \ell$ almost everywhere. In this case, there exists an $h_P$ and a $\delta_P$ such that $\ell_{h,\delta}(t_1)>x_1$ for every $h \leq h_{P}$ and $\delta \leq \delta_P$, which means that $\ell_{h,\delta,D}(t_1) > x_1 - \ell_{h,\delta,BV}(t_1)$, where $\ell_{h,\delta,D}$ and $\ell_{h,\delta,BV}$ are obtained from the proof of Proposition~\ref{prop:lhd_bv}. Since $\ell_{h,\delta,D}$ is decreasing, for $t \in [t_0,t_1]$ we have $\ell_{h,\delta,D}(t) > x_1 - \ell_{h,\delta,BV}(t_1)$ and
\begin{align}
    \ell_{h,\delta,D}(t) + \ell_{h,\delta,BV}(t) &> x_1  - \ell_{h,\delta,BV}(t_1) + \ell_{h,\delta,BV}(t) \\
    &\geq x_1 - (\rho\mathscr{C}_{\CFL})^{-1}(t_1 - t) .
\end{align}
Therefore, for $t\in [t_0,t_1]$, $\ell_{h,\delta}(t) > x_1 - (\rho \mathscr{C}_{\CFL})^{-1}(t_1 - t)$ , which yields $P \subset A_{h,\delta}^{-}$.  
\end{proof}

\begin{remark}
\label{rem:c_smooth_app}
Let $v \in \mathscr{C}_c^\infty(A^{-})$. Then, $\text{supp}(v)$ is compact in $A^{-}$ and  can be covered by a finite number of right leaning type parallelograms $\{P_i\}_{i}$. Since there exists a $C_{c}^\infty$ partition of unity $\{\zeta_{i}\}_i$ subordinate to $\{P_i\}_{i}$, we can write $v = \sum_{i} v \zeta_i$ and $\text{supp}(v\zeta_i) \subset P_i$. Then, for any $h < h_0$ and $\delta < \delta_0$, where $h_0 = \min_{i}{h_{P_i}},\,\delta_{0} = \min_{i}{\delta_{P_i}}$, the support of $v$ is contained in $A_{h,\delta}^{-}$, and $v \in \mathscr{C}_c^\infty(A_{h,\delta}^{-})$.
\end{remark}

\begin{remark}
\label{rem:a_smooth_app}
The fact that oxygen tension satisfies the Neumann boundary condition~\eqref{eqn:bdr_cond_2} forces a test function in~\eqref{eqn:weak_ot} not to vanish at the boundary $(0,T_{\ast}] \times \{0\}$ of $D_{T_\ast}^\thr$. This requirement forces us to consider $A^{-}$ instead of $D_{T_\ast}^\thr$ in Lemma~\ref{lemma:covering}. Since we can extend any function $v \in \mathscr{C}^\infty(D_{T_\ast}^\thr)$ with  $v(t,\ell(t)) = 0$ smoothly to $A^{-}$, the proof of Proposition~\ref{prop:c_conv} is not affected by this consideration of $A^{-}$.
\end{remark}
Next, we show that oxygen tension $c$ satisfies~\eqref{eqn:weak_ot}.
\begin{proposition}[Step~\ref{ca.4}]
\label{prop:c_conv}
Let $c : \mathscr{D}_{T_\ast} \rightarrow \mathbb{R}$ be the limit provided by Theorem~\ref{thm:main_thm_1}. Then, for every $v \in \Hdx{c}(D_{T}^{\thr})$ such that $\partial_t v\in L^2(D_{T_\ast}^\thr)$, $c_{\vert D_{T_\ast}^\thr}$ satisfies~\eqref{eqn:weak_ot}.
\end{proposition}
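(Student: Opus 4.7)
The plan mirrors Propositions~\ref{prop:alpha_conv} and~\ref{prop:u_conv}: pick a smooth test function, build a discrete counterpart, substitute into~\eqref{eqn:bform_ot}, sum over $n$, and pass to the limit term by term. Two new features distinguish this step. First, \eqref{eqn:weak_ot} carries a time derivative of $v$, so a discrete summation by parts in time is needed; it will naturally produce the initial-time integral $\int_{\Omega(0)} c_0\, v(0,\cdot)\,\mathrm{d}x$. Second, admissible test functions for $c$ need not vanish near $x=0$ (Neumann boundary) nor at $t=0$, which is precisely why the covering Lemma~\ref{lemma:covering} and Remark~\ref{rem:c_smooth_app} are in place: they guarantee that the support of any $v \in \mathscr{C}_c^\infty(A^-)$ eventually lies inside $A_{h,\delta}^-$.

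By density, it suffices to treat $v \in \mathscr{C}_c^\infty(A^-)$; the general case will follow at the end. For such $v$, set $v_{h,\delta}(t,x) := \mathcal{I}_h v(t_n,x)$ on $\mathcal{T}_n\times(0,\ell_m)$. For $h,\delta$ small, Remark~\ref{rem:c_smooth_app} yields $\supp(v) \subset A_{h,\delta}^-$, so $v(t_n,\cdot)$ vanishes in a neighbourhood of $\ell_h^n$ and near $t=T_\ast$, and $v_h^n := v_{h,\delta}(t_n,\cdot)_{|\Omega_h^n} \in \mathcal{S}_{h,0}^n$ is an admissible test in~\eqref{eqn:bform_ot}. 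Smoothness of $v$ gives the strong $L^2(\mathscr{D}_{T_\ast})$ convergences $v_{h,\delta}\to v$, $\partial_x v_{h,\delta}\to \partial_x v$, and $D_{h,\delta}^n\Pi_h v_{h,\delta} \to \partial_t v$.

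Substituting $v_h^n$ in~\eqref{eqn:bform_ot} and summing over $n=1,\ldots,N_\ast$ yields $T_1+T_2+T_3=0$, where $T_1$ is the mass-lumped time term, $T_2$ the $\lambda$-diffusion, $T_3$ the $Q$-reaction. A discrete summation by parts in time rewrites
\begin{equation*}
T_1 = -\sum_{n=0}^{N_\ast-1} \delta\, \bigl(\Pi_h c_h^n,\,D_{h,\delta}^n\Pi_h v_{h,\delta}\bigr)_{(0,\ell_m)} - (\Pi_h c_h^0,\Pi_h v_h^1)_{(0,\ell_m)},
\end{equation*}
the terminal contribution vanishing because $v_{h,\delta}\equiv 0$ near $t=T_\ast$. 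The strong convergence $\Pi_{h,\delta} c_{h,\delta}\to c$ paired with $D_{h,\delta}^n\Pi_h v_{h,\delta}\to \partial_t v$ then identifies the first piece with $-\int_{D_{T_\ast}^\thr} c\,\partial_t v\,\mathrm{d}x\,\mathrm{d}t$, the restriction to $D_{T_\ast}^\thr$ being automatic from $\supp(v)\subset A^-$; the initial piece converges to $-\int_{\Omega(0)} c_0\,v(0,\cdot)\,\mathrm{d}x$ using $\Pi_h c_h^0\to c_0^{\mathrm{e}}$ in $L^2$ and $\supp(v(0,\cdot))\subset[0,\ell_0)$. For $T_2$, Proposition~\ref{prop:cv_lh_ind} provides $\pmb{\chi}_{A_{h,\delta}}\to \pmb{\chi}_{D_{T_\ast}^\thr}$ a.e., and the weak-strong pairing $\partial_x c_{h,\delta}\halfarrow \partial_x c$ against $\partial_x v_{h,\delta}\to \partial_x v$ delivers $\lambda\int_{D_{T_\ast}^\thr}\partial_x c\,\partial_x v\,\mathrm{d}x\,\mathrm{d}t$. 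For $T_3$, Lemmas~\ref{appen_id.e}--\ref{appen_id.f} combine the a.e. convergence $\alpha_{h,\delta}\to\alpha$, the strong $L^2$ convergence of $\Pi_{h,\delta} c_{h,\delta}$, and the uniform $L^\infty$ bounds from Lemma~\ref{lemma:dot_max} to produce $Q\int_{D_{T_\ast}^\thr}\alpha c v/(1+\widehat{Q}_1|c|)\,\mathrm{d}x\,\mathrm{d}t$.

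The main delicacy I anticipate is the time-shift $n\mapsto n-1$ inside the denominator of the reaction term: strong $L^2$ convergence of $\Pi_{h,\delta} c_{h,\delta}$ must be transferred to its shift by $\delta$, which follows from strong convergence combined with the uniform bound $0\le c_{h,\delta}\le 1$ so that $1/(1+\widehat{Q}_1|\Pi_h c_h^{n-1}|)$ is bounded and converges a.e.\ to the right limit. A concluding density argument---showing that $\mathscr{C}_c^\infty(A^-)$ is dense in $\{v \in \Hdx{c}(D_{T_\ast}^\thr):\partial_t v \in L^2(D_{T_\ast}^\thr)\}$ in the natural graph norm that simultaneously controls $\partial_t v$ and $\partial_x v$---extends~\eqref{eqn:weak_ot} to every admissible $v$ and completes the proof.
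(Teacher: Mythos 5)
Your proposal is correct and follows essentially the same route as the paper: reduce by density and the covering Lemma~\ref{lemma:covering}/Remark~\ref{rem:c_smooth_app} to smooth test functions supported in a right--leaning parallelogram, test \eqref{eqn:bform_ot} with $\mathcal{I}_h v(t_n,\cdot)$, sum over $n$, perform a discrete summation by parts in time, and pass to the limit term by term via the weak--strong and bounded--strong convergence lemmas. Your explicit treatment of the time shift $n\mapsto n-1$ in the denominator of the reaction term is a point the paper passes over with ``it is easily observed,'' but it does not change the argument.
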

\begin{proof}
Since $v \in \Hdx{c}(D_{T}^{\thr})$ can be approximated by functions in $\mathscr{C}^\infty(D_{T_\ast}^\thr)$ with $v(t,\ell(t)) = 0$ for all $t \in (0,T_{\ast})$, by Remarks~\ref{rem:c_smooth_app} and~\ref{rem:a_smooth_app}  it is sufficient to consider functions $v \in \mathscr{C}_c^{\infty}(P)$, where $P \subset A^{-}$ is a right--leaning parallelogram.

Choose $v \in \mathscr{C}_c^\infty(P)$. There exists an $h$ and  a $\delta$ small enough such that $v  \in \mathscr{C}_c^\infty(A_{h,\delta}^{-})$ by Remark~\ref{rem:c_smooth_app}. Define $v_{h,\delta}(t,x)= \mathcal{I}_h v(t_n,x)$ for $(t,x)\in \mathcal{T}_n \times \mathcal{X}_j$ for $n,\,j \ge 0$. The piecewise linear in space and piecewise constant in time function $v_{h,\delta}$ satisfies the following properties: (a) $v_{h,\delta} \in L^2(0,T_{\ast};H^{1}(0,{{\ell_m}}))$, (b) for $n \geq 0$, $v_{h,\delta}(t_n,\ell_h^n) = 0$, (c) $v_{h,\delta} = 0$ on $\mathscr{D}_{T^{\ast}}\setminus \overline{A_{h,\delta}}$, and (d) $v_{h,\delta}(T_{\ast},\cdot) = 0$.

    In \eqref{eqn:bform_ot}, take the test function as $v_{h,\delta}(t_n,\cdot)$ and sum over $n = 1,\ldots,N_{\ast}$ to obtain $T_1 + T_2 = T_3$, where 
\begin{align}
T_1 &= \sum_{n=1}^{N_{\ast}} \int_{0}^{{\ell_m}}  (\Pi c_{h,\delta}(t_n,x) - \Pi c_{h,\delta}(t_{n-1},x)) \Pi v_{h,\delta} (t_n,x)\,\mathrm{d}x, \\
   T_2 &:= \sum_{n=1}^{N_{\ast}} \lambda \delta \int_{0}^{{\ell_m}} \partial_x c_{h,\delta}(t_n,x) \partial_{x} v_{h,\delta}(t_n,x)\,\mathrm{d}x, \text{ and } \\
T_3 &:= -Q\sum_{n=1}^{N_{\ast}} \delta \int_{0}^{{\ell_m}} \dfrac{\alpha_{h,\delta}(t_n,x) \Pi_h c_{h,\delta}(t_n,x)}{1 + \widehat{Q}_1 |\Pi_h c_{h,\delta}(t_{n-1},x)|} \Pi_h v_{h,\delta} (t_n,x)\,\mathrm{d}x.
\end{align}
Note that the space integrals in $T_1$, $T_2$, and $T_3$ are on $(0,\ell_h^n)$ for each $t_n$ by the property (c). A use of~\eqref{eqn:disc_int_parts} leads to
\begin{multline}
    T_1 = -\sum_{n=1}^{N_{\ast}} \int_{0}^{{\ell_m}} (\Pi_h v_{h,\delta} (t_n,x) - \Pi_h v_{h,\delta} (t_{n-1},x)) \Pi_h c_{h,\delta}(t_n,x)\,\mathrm{dx}  \\
    +\int_{0}^{{\ell_m}}  \Pi_h v_{h,\delta}(T_{\ast},x) \Pi_h c_{h.\delta} (T_{\ast},x)\,\mathrm{d}x - \int_{0}^{{\ell_m}} \Pi_h v_{h,\delta}(0,x) \Pi_h c_{h.\delta} (0,x)\,\mathrm{d}x. 
\end{multline}
Using the property (c) and the strong convergences $\Pi_{h}c_{h,\delta}(0,\cdot) \rightarrow c_0(\cdot)$, $\Pi_{h} v_{h,\delta}(0,\cdot) \rightarrow v(0,\cdot)$, $\partial_t v_{h,\delta} \rightarrow \partial_t v$, $\Pi_h c_{h,\delta} \rightarrow c$ in $L^2(\mathscr{D}_{T_\ast})$, we deduce
\begin{align}
    T_1 \rightarrow{}& -\int_{0}^{T_{\ast}} \int_{0}^{{\ell_m}} c\,\partial_t v\,\mathrm{d}x\,\mathrm{d}t - \int_{0}^{{{\ell_m}}}c_{0}(x)v(0,x)\,\mathrm{d}x \\
    &= -\iint_{D_{T_\ast}^\thr} c\,\partial_t v\,\mathrm{d}x\,\mathrm{d}t - \int_{0}^{\ell(0)}c_0(x)\,v(0,x)\,\mathrm{d}x.
\end{align}

\noindent The weak convergence $\partial_x c_{h,\delta} \halfarrow c$, the strong convergence $\partial_x v_{h,\delta} \rightarrow \partial_x v$ in $L^2(\mathscr{D}_{T_\ast})$, and an application of Lemma~\ref{appen_id.e} yield
\begin{align}
    T_2 = \lambda \int_{0}^{T_{\ast}} \int_{0}^{{{\ell_m}}}  \partial_x c_{h,\delta} \partial_{x} v_{h,\delta} \,\mathrm{d}x\,\mathrm{d}t &\rightarrow \lambda \int_{0}^{T_{\ast}} \int_{0}^{{{\ell_m}}}  \partial_x c\,\partial_{x} v \,\mathrm{d}x\,\mathrm{d}t \\
    &\qquad= \lambda \iint_{D_{T_\ast}^\thr} \partial_x c \,\partial_x v\,\mathrm{d}x\,\mathrm{d}t.
\end{align}
It is easily observed that $\Pi_{h,\delta} c_{h,\delta}/(1 + \widehat{Q}_1 |\Pi_{h,\delta} c_{h,\delta}|) \rightarrow c/(1 + \widehat{Q}_1|c|)$ in $L^2(\mathscr{D}_{T_{\ast}})$. Then, use of Lemma~\ref{appen_id.f} shows that $\alpha_{h,\delta}\Pi_{h,\delta} c_{h,\delta}/(1 + \widehat{Q}_1 |\Pi_{h,\delta} c_{h,\delta}|) \rightarrow \alpha c/(1 + \widehat{Q}_1|c|)$ in $L^2(\mathscr{D}_{T_\ast})$. Since $\Pi_h v_{h,\delta} \rightarrow v$ in $L^2(\mathscr{D}_{T_\ast})$ we obtain
\[
    T_3  \rightarrow -Q\int_{0}^{T_{\ast}} \int_{0}^{{{\ell_m}}}  \dfrac{\alpha c}{1 + \widehat{Q}_1|c|}\, v \,\mathrm{d}x\,\mathrm{d}t 
    = -Q\iint_{D_{T_\ast}^\thr}  \dfrac{\alpha c}{1 + \widehat{Q}_1|c|} \,v\,\mathrm{d}x\,\mathrm{d}t.
\]

\noindent Plugging the above in $T_1 + T_2 = T_3$ yields the desired result.
\end{proof}

 This concludes the proof of Theorem~\ref{thm:main_thm_2}, and thereby convergence of the Discrete scheme~\ref{defn:semi_disc_soln} to a threshold solution (see Definition~\ref{defn:ext_soln}).

\section{Numerical results}
\label{sec:num_results}
In Subsection~\ref{sec:numerical_example}, we present the solution of the Discrete scheme~\ref{defn:semi_disc_soln} for a fixed set of parameters and discretisation factors, and discuss it's important physical and numerical features.  In Subsection~\ref{sec:opt_ex}, we study the dependency  of $T_{\ast}()$, the time below which a threshold solution exists, on the parameters $a_{\ast}$, $a^{\ast}$, $m_{02}$ and $\alast$. 
\subsection{Numerical example}
\label{sec:numerical_example}
The parameters are chosen as in~\cite{breward_2002}: $k = 1$, $\mu = 1$, $Q = 0.5$, $\widehat{Q}_1 = 0$, $s_1 = 10 = s_4$, $s_2 = 0.5 = s_3$, and $\alast = 0.8$. The bounds of the cell volume fraction are set to be $a_{\ast} = 0.4$ and $a^{\ast} = 0.82$. The extended domain length $\ell_m$ is set as 10. The threshold value is taken as $\ath = 0.1$. With these choices the constant $\mathscr{C}_{\CFL}$ is  $0.0361$. Set $\rho = 0.1$ and choose $\delta = 10^{-3}$ and $h = 5\times 10^{-2}$, so that the condition~\eqref{eqn:cfl_condition} is satisfied. 

\begin{figure}[h!]
	\centering
	\begin{subfigure}[b]{0.48\textwidth}
		\includegraphics[scale=0.85]{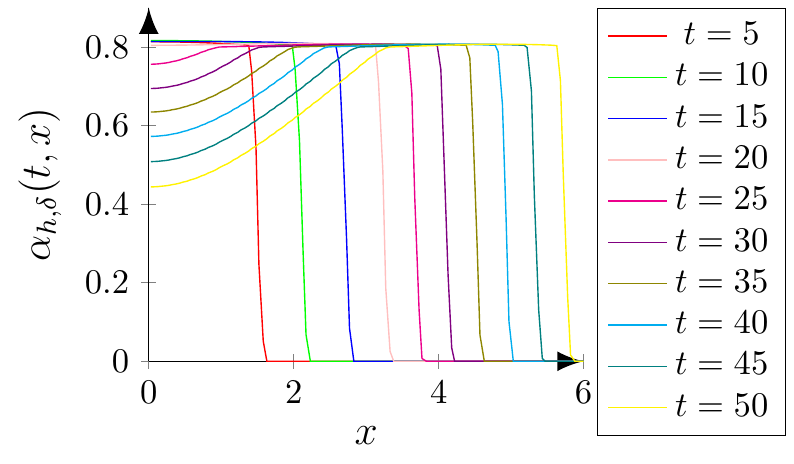}
		\subcaption{cell volume fraction}
		\label{fig:cvf}
	\end{subfigure}
	~ 
	\begin{subfigure}[b]{0.48\textwidth}
		\includegraphics[scale=0.85]{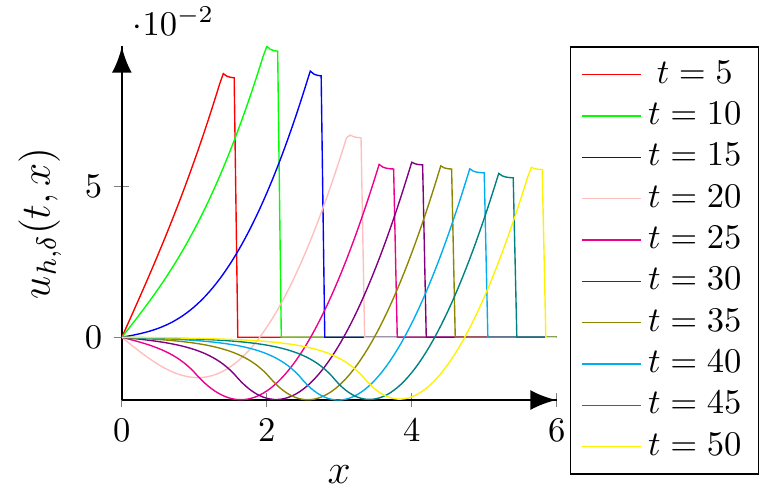}
		\subcaption{cell velocity}
		\label{fig:cvel}
	\end{subfigure}
	\begin{subfigure}[b]{0.48\textwidth}
		\includegraphics[scale=0.85]{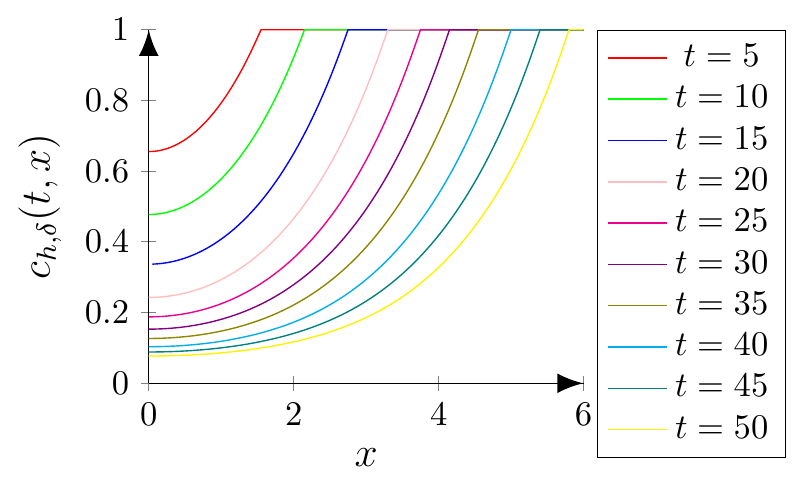}
		\subcaption{oxygen tension}
		\label{fig:oten}
	\end{subfigure}
	~ 
	\begin{subfigure}[b]{0.48\textwidth}
		\includegraphics[scale=0.85]{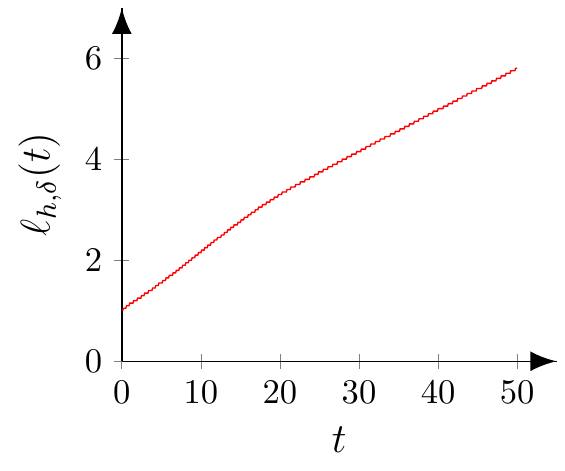}
		\subcaption{tumour radius}
		\label{fig:rval}
	\end{subfigure}
	~ 
	\caption{Numerical solution of the Discrete scheme~\ref{defn:semi_disc_soln} with $\delta = 10^{-3}$ and $h = 5\times 10^{-2}$ is depicted. A curve in each of the  Figures~\ref{fig:cvf},~\ref{fig:cvel}, and~\ref{fig:oten} represents the spatial variation of cell volume fraction, cell velocity, and oxygen tension, respectively on the tumour domain $(0,\ell_{h,\delta}(t))$ at a time $t$ as colour-coded in the legends. Figure~\ref{fig:rval} represents the evolution of the tumour radius $\ell(t)$ with respect to the time.}
	\label{fig:m_ge_alpha}
\end{figure}
The final time is set to be $T_{\ast} = 50$. We plot the variation of $\alpha_{h,\delta}(t,\cdot)$, $u_{h,\delta}(t,\cdot)$ and $c_{h,\delta}(t,\cdot)$ for the times $t \in \{5,10,\ldots,50\}$ on the corresponding domains $(0,\ell_{h,\delta}(t))$ in Figures~\ref{fig:cvf},~\ref{fig:cvel}, and~\ref{fig:oten}, respectively. The variation of $\ell_{h,\delta}(t)$ with respect to time is depicted in~\ref{fig:rval}. We observe from Figures~\ref{fig:cvf} and~\ref{fig:oten} that the volume fraction and oxygen tension decrease towards $x = 0$ due to the slower diffusion of oxygen towards $x = 0$  and the accelerated cell death owing to nutrient starvation. This effect is more noticeable in larger tumours than smaller ones. The positive value of cell velocity towards the tumour boundary and negative value towards the interior suggests that the outermost cells flow outwards and the internal cells flow inwards.  Note that $c_{h,\delta}$ is unity at $\ell_{h,\delta}(t)$, and this unlimited supply of nutrient results in the steady increase of tumour size as illustrated in Figure~\ref{fig:rval}.

\subsection{Optimal time of existence}
\label{sec:opt_ex}
The time $T_{\ast}$ below which a threshold solution exists (obtained in Proposition~\ref{prop:dalpha_max}) depends on the parameters $a_{\ast}$, $a^{\ast}$, $m_{02}$, and $\alast$. We can always fix $\ell_m$ large enough so that $\rho\mathscr{C}_{\CFL}(\ell_m - \ell_0)$ is larger than $T_m$ and $T_M$, so that $T_\ast=\min(T_m,T_M)$ (see Proposition~\ref{prop:dalpha_max}). The time $T_m$ provided by~\eqref{eqn:max_t_tm} is a decreasing function of $\mathcal{F}_{\rm min}$. The fact that $\mathcal{F}_{\rm min} \geq 0$ yields $T_m \le \log(\ath/a_{\ast})/s_2$, which precisely occurs when $a^{\ast} =\alast$ (if and only if $\mathcal{F}_{\rm min} = 0)$. The time $T_M$ provided by~\eqref{eqn:tmax} requires a more careful analysis. The domain of $T_M$ as a function of $a^{\ast}$ is $(m_{02},1]$. However, $T_{M}$ is zero at both $a^{\ast} = m_{02}$ and $a^{\ast} = 1$ (since $\lim_{a^{\ast} \rightarrow 1}\mathcal{F}_{\rm max} = \infty$). Therefore, $T_M$ has the maximum between $a^{\ast} = m_{02}$ and $a^{\ast} = 1$. Here, we need to consider three cases. If $m_{02} > \alast$, then $T_{\ast}$ attains the maximum at an $a^{\ast}$ between $m_{02}$ and $1$ (see Figure~\ref{fig:Tstar}).
\begin{figure}[h!]
	\centering
	\begin{subfigure}[b]{0.48\textwidth}
		\includegraphics[scale=0.9]{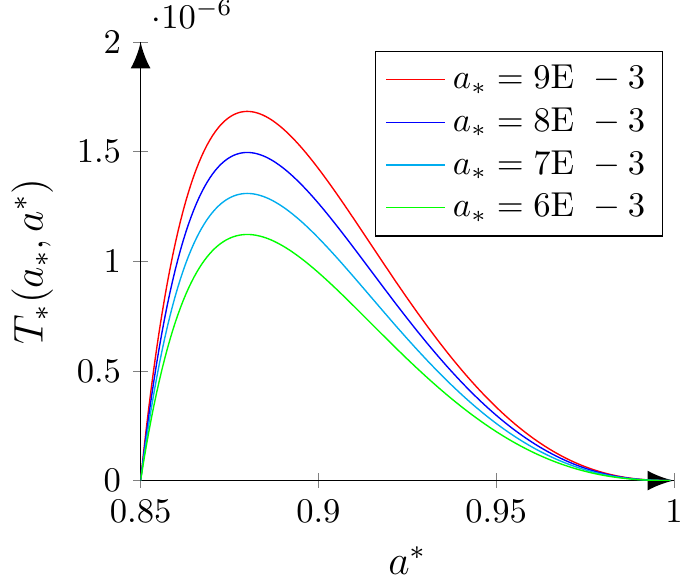}
		\subcaption{$m_{02} = 0.85$}
	\end{subfigure}
	~ 
	\begin{subfigure}[b]{0.48\textwidth}
		\includegraphics[scale=0.9]{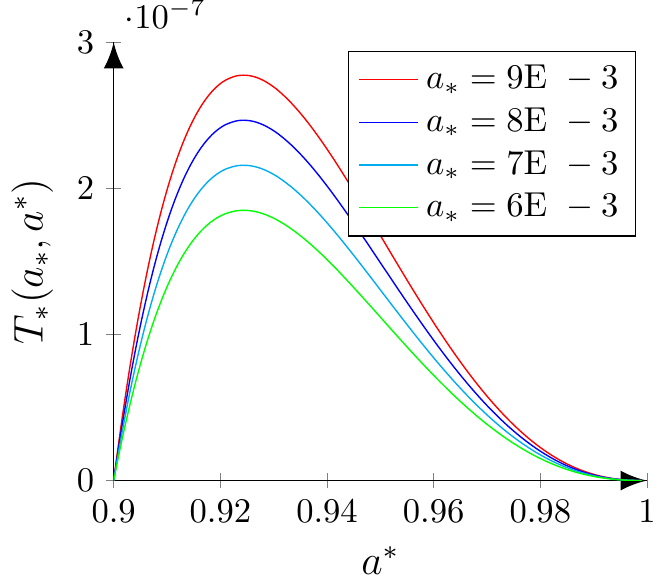}
		\subcaption{$m_{02} = 0.9$}
	\end{subfigure}
	~ 
	\caption{Variation of $T_{\ast}$ with respect to $a^{\ast}$ and $a_{\ast}$ when $m_{02} > \alast = 0.8$. }
	\label{fig:Tstar}
\end{figure}
If $m_{02} = \alast$, then $T_M$ attains the maximum between $a^{\ast} = \alast$ and $a^{\ast} = 1$. Since $T_{m}$ is decreasing on $[\alast,1]$, $T_{\ast}$ attains the maximum at an $a_{\ast}$ in $(\alast,1)$ (see Figure~\ref{fig:case_08_all}).
\begin{figure}[h!]
	\centering
	\begin{subfigure}[b]{0.48\textwidth}
		\includegraphics[scale=0.88]{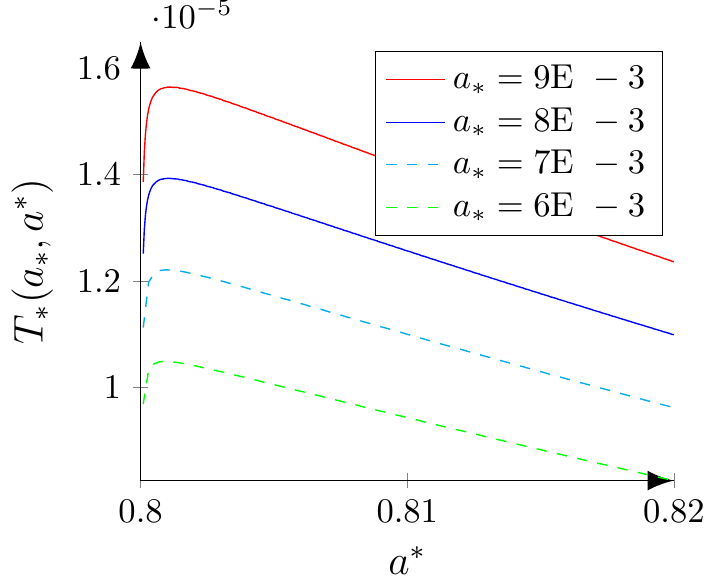}
		\subcaption{$m_{02} = 0.8$}
		\label{fig:case_08_all}
	\end{subfigure}
	~ 
	\begin{subfigure}[b]{0.48\textwidth}
		\includegraphics[scale=0.88]{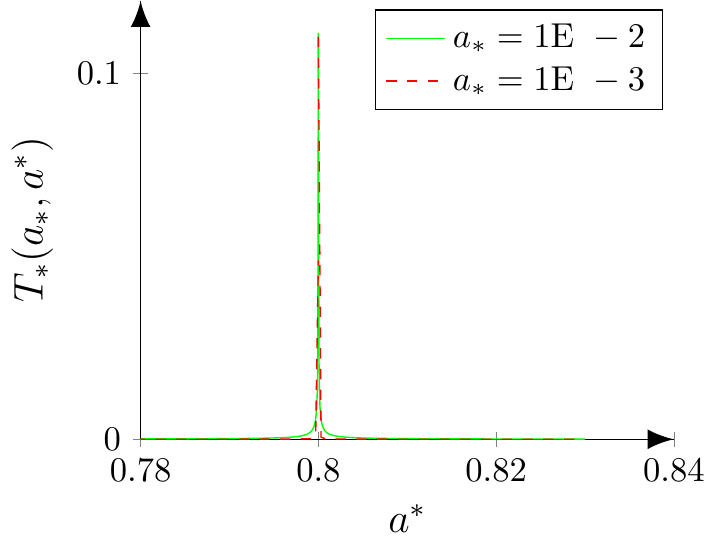}
		\subcaption{$m_{02} = 0.7$}
		\label{fig:case_07_all}
	\end{subfigure}
	~ 
	\caption{Variation of $T_{\ast}$ with respect to $a^{\ast}$ and $a_{\ast}$ when $m_{02} \leq \alast  = 0.8$. }
	\label{fig:Tstar.2}
\end{figure}
However, if $m_{02} < \alast$, then $T_{\ast}$ attains maximum exactly at $\alast$ since $\mathcal{F}_{\rm max}$ is minimal at $\alast$ and $a^{\ast} - m_{02}$ is increasing on $(m_{02},1)$ (see Figure~\ref{fig:case_07_all}). 

The time $T_M$ depends also on the lower bound $a_{\ast}$. The range of $a_{\ast}$ is $(0,\ath)$. From~\eqref{eqn:fmax} it is easy to observe that $\mathcal{F}_{\rm max}$ is a decreasing function of $a_{\ast}$. Hence $T_{\ast}$ increases as $a_{\ast}$ approaches $\ath$ which is evident from Figures~\ref{fig:Tstar},~\ref{fig:Tstar.2}, and~\ref{fig:t_decrease}.

\begin{figure}[h!]
	\centering
	\includegraphics[scale=1]{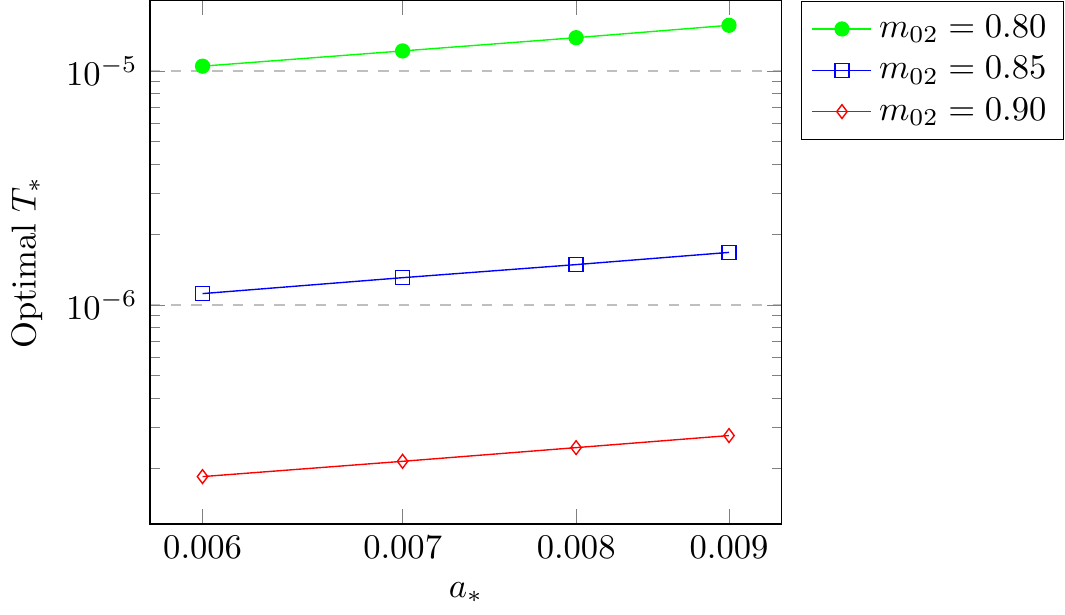}
	\caption{The dependence of optimal $T_{\ast}$ on $a_{\ast}$.}
	\label{fig:t_decrease}
\end{figure}

\begin{remark}[Sufficiency of Theorem~\ref{thm:main_thm_1}]
	The optimal value of $T_{\ast}$ found here is of order of $10^{-7}$ to $10^{-5}$, except when $m_{02} < \alast$ in which case $T_{\ast}\approx 0.12$. However, in practice, we observe that the Discrete  scheme~\ref{defn:semi_disc_soln} is stable, and thus convergent, up to at least a time of the order of $10^2$, as shown in Section~\ref{sec:numerical_example}. 
	In other words, the time $T_\ast$ derived in the proof of Proposition~\ref{prop:dalpha_max} is not restrictive, and only provides a sufficient condition for the convergence.
	
Also, it must be noted that $T_{\ast}$ is only restricted by the estimates on the model variables, in particular on cell volume fraction (see Proposition~\ref{prop:dalpha_max}). The convergence analysis (Theorem~\ref{thm:main_thm_2} and proofs) does not impose any restriction on $T_{\ast}$. Consequently, if the Discrete scheme~\ref{defn:semi_disc_soln} is stable (the proper norms remain bounded) up to a certain time, which can be partially assessed during numerical simulations, then the convergence analysis shows the limits of subsequences are threshold solutions of the continuous model.
\end{remark}

\section{Discussion}
\label{discussion}
 The flexible design of the tools in Sections~\ref{sec:main_thm} and~\ref{sec:convergence} allows us to apply Theorems~\ref{thm:main_thm_1} and~\ref{thm:main_thm_2} to models similar to~\eqref{system:nd_sys}; for instance the cut--off model \begin{align}
 \dfrac{k u \widetilde{\alpha}}{1 - \widetilde{\alpha}} - \mu \dfrac{\partial}{\partial x} \left(\widetilde{\alpha} \dfrac{\partial u}{\partial x}\right) &= -\dfrac{\partial}{\partial x} \left(  \mathscr{H}(\widetilde{\alpha}) \right), \\
 \dfrac{\partial c}{\partial t} - \lambda \dfrac{\partial^2 c}{\partial x^2} &= -\dfrac{Q \widetilde{\alpha} c}{1 + \widehat{Q}_1 c},
 \end{align}
 where the cut--off function is defined by $\widetilde{\alpha} := \min(\max(\alpha,\alpha_{m}),\alpha_{M})$, $\widetilde{\alpha}$ is governed by~\eqref{eqn:vol_fraction}, and $0 < \alpha_m  < \alpha_{M} < 1$ are fixed positive numbers.

 Another example is the growth model, wherein the oxygen tension is governed by 
 \begin{gather}
 \label{eqn:c_invivo}
 \dfrac{\partial c}{\partial t} - \lambda \dfrac{\partial^2 c}{\partial x^2} = -\dfrac{Q \alpha c}{1 + \widehat{Q}_1 c} \quad \forall\,(t,x) \in \mathscr{D}_T,\\
 \dfrac{\partial c}{\partial x}(t,0) = 0,\;c(t,\ell_{m}) = 1 \quad \forall\,t \in (0,T), \textrm{ and } \\
 c(0,x) = c_0(x)\;\;\forall\,x \in [0,{{{\ell_m}}}],
 \end{gather}
 where $\ell_{m}$ can be physically interpreted as the dimension of the growth platform in the \emph{in vitro case} or the location of the nearest capillary in the \emph{in vivo} case. The oxygen tension equation is defined in a fixed domain in this case.
 
A prospective research direction is to derive the results in this article for higher dimensional models. However, a higher dimensional setting offers many difficulties and a few important ones are briefly discussed here. We frequently use the embedding result that every function in $H^1(0,\ell_{m})$ is continuous and bounded. But, this result is not valid in $\mathbb{R}^2$ or $\mathbb{R}^3$. Consequently, we cannot use the energy norm estimates to obtain the boundedness of velocity in supremum norm, which in turn is essential to obtain boundedness and bounded variation of estimates on cell volume fraction. Secondly, to control the bounds on cell volume fraction, we need an additional supremum norm and bounded variation estimate on the divergence of the cell velocity field. This is a difficult task in two and three dimensions since the cell volume fraction that appear as a coefficient in the operators in the cell velocity equation is not a smooth function. Moreover, the challenges offered by the moving boundary are many fold. For instance, the moving boundary can make loops or knots, and these situations demand careful theoretical investigations.

\section{Conclusion}
In this paper, we achieved the following objectives: (a) designed a scheme for the threshold model and proved its convergence (up to a subsequence), and (b) established the existence of a threshold solution up to a finite time.  It is possible to extend the results derived in this article to similar models.  A few embedding results used in here apply only to the one--dimensional case, and hence a direct extension to higher dimensional models is challenging.   However, the article provides a proper framework to approach similar coupled problems of elliptic, hyperbolic, and parabolic equations in single or several spatial dimensions. It remains mostly open to develop a general theory for problems with degenerate equations; for instance,~\eqref{eqn:cel_velocity} which is only non--uniformly elliptic,  defined in time--dependent domains, which includes the study of well-posedness, design, and analysis of numerical schemes.

\subsubsection*{Acknowledgement}
The authors are grateful to Dr.\ Jennifer Anne Flegg, University of Melbourne, Australia for her valuable suggestions. The work of the first author was partially supported by the Australian Government through the Australian Research Council's Discovery Projects funding scheme (project number DP170100605). The second author gratefully acknowledges the local hospitality provided by Monash University, Australia during her visit, in May 2019.

\bibliographystyle{plain}
\bibliography{1d_theory_bib}

\appendix
\section*{Appendix}
\label{sec:appen}
\renewcommand{\thesubsection}{\Alph{subsection}}
\subsection{Expansions of abbreviations and notations}
\label{appen_A}
 Description of the notations used to denote model variables are tabulated in Table~\ref{tab:not}. The symbols $\alpha$, $u$, $c$, and $\ell$, with or without any math accents, always represent the cell volume fraction, cell velocity, nutrient concentration, and tumour radius. 
 \begin{longtable}{|M{1.7cm}|M{1.5cm}|M{4cm}|M{3.8cm}|}
		\hline
		Variables              & Domain                & Meaning      & Location of definition \\ \hline \hline
		$\check{\alpha},\check{u},\check{c},\check{\ell}$&  $D_T$                     &    model variables at the continuous level                 &              Model~\eqref{system:nd_sys}           \\ \hline
		$\alpha, u, c, \ell$	&          $D_T^{\thr}$             &  threshold solution                     &      Definition~\ref{defn:ext_soln}                 \\ \hline
		$\ell_{h}^n$	&  scalar                   &         discrete tumour radius at time $t_n$              &                   \ref{ds.b} of the Discrete scheme~\ref{defn:semi_disc_soln}      \\ \hline 
		$\widetilde{u}_h^n,\,\widetilde{c}_h^n$ & $(0,\ell_h^n)$ & discrete finite element solutions of the cell velocity and oxygen tension equation, resp. & ~\eqref{eqn:dweak_vel} and~\eqref{eqn:bform_ot} of the Discrete scheme~\ref{defn:semi_disc_soln} \\ \hline 
		$\alpha_h^n,\,u_h^n,\,c_h^n$ & $(0,\ell_m)$ & spatial discrete solutions at time $t_n$ & ~\eqref{eqn:uhn} and~\eqref{eqn:chn} of the Discrete scheme~\ref{defn:semi_disc_soln}\\ \hline
		$ \begin{array}{c}
		\alpha_{h,\delta},\,u_{h,\delta}, \\
		c_{h,\delta},\,\ell_{h,\delta}
		\end{array}$ & $\mathscr{D}_{T}$ & time--space discrete solutions &        Definitions~\ref{defn:time-recon} and~\ref{defn:disc_soln}                 \\ \hline
		$\widehat{u}_{h,\delta}$ & $\mathscr{D}_T$ & constant extension of $u_{h,\delta}(t,\cdot)$ to $(\ell_{h,\delta}(t),\ell_m)$, $t \in (0,T)$ & Eq.~\eqref{eqn:uhd_const} \\ \hline
			\caption{Notations used to denote the continuous and discrete variables}
			\label{tab:not}
\end{longtable}

\noindent The physical interpretations of the boundary conditions~\eqref{eqn:bdr_cond_1} -- \eqref{eqn:bdr_cond_2} are presented in Table~\ref{tab:boundary}. For further details, refer to~\cite[Section 2.2]{breward_2003,breward_2002}.
\begin{longtable}{|M{1.2cm}|>{\raggedright}M{2.7cm}|m{8cm}|}
		\hline \hline
		Variable
		& Boundary cond.
		& \multicolumn{1}{>{\centering\arraybackslash}m{80mm}|}{Interpretation}   \\ \hline \hline
		\multirow{2}{*}{$\check{c}$} &        $\partial_x \check{c}(t,0) = 0$               &        The tumour is radially symmetric. Therefore, there is no gradient of oxygen present at the tumour centre.               \\ \cline{2-3} 
		&        $\check{c}(t,\ell(t)) = 1$                 &     Constant external supply of oxygen. The unit value is because of nondimensionalisation.                \\ \hline
		\multirow{2}{*}{$\check{u}$} &           $\check{u}(t,0) = 0$           & Radial symmetry of the tumour implies no advection of tumour cells across the centre.                      \\ \cline{2-3} 
		& \small{$\displaystyle \mu \dfrac{\partial \check{u}}{\partial x}(t,\check{\ell}(t)) = \dfrac{(\check{\alpha}(t,\check{\ell}(t))  - \alast)^{+}}{(1 - \check{\alpha}(t,\check{\ell}(t)))^2}$} & Continuity of stress across the time--dependent boundary. \\ \hline
			\caption{Physical interpretations of the boundary conditions~\eqref{eqn:bdr_cond_1} -- \eqref{eqn:bdr_cond_2}}
		\label{tab:boundary}
	\end{longtable}

\noindent For $\mathrm{x} = 1,2,\ldots$  expansions of the abbreviations are as follows.
\begin{small}
\begin{longtable}{|M{2cm}|M{3.5cm}||M{2cm}|M{3.5cm}|}
			\hline
			\textbf{Abbreviation} & \textbf{Definition}   & \textbf{Abbreviation} & \textbf{Definition}    \\ \hline\hline
			TS.x                    & Threshold Solution.x     &    AS.x                  &    Aubin--Simon.x     \\ \hline
			DS.x                    & Discrete Solution.x     &      CA.x               & Convergence Analysis.x   \\ \hline
		    CR.x             
			&       Compactness Results.x
			&                    & \\         \hline
			\caption{Expansions of abbreviations}
			\label{tab:abrreviations}
\end{longtable}
\end{small}

\subsection{Physical properties of the model} 
\label{appen_C}
\renewcommand{\theequation}{$\mathrm{B}$.\arabic{equation}}
\renewcommand{\thetheorem}{$\mathrm{B}$.\arabic{theorem}}

	Define the continuous function spaces $\mathscr{C}^{1,2}(D_T)$ and $\mathscr{C}^{1,2}(D_T)$ by
	\begin{align}
	\mathscr{C}^{1}(\overline{D_T}) :={}& \big\{ c: \overline{D_T}\rightarrow \mathbb{R}\;:\; \dfrac{\partial c}{\partial t}, \dfrac{\partial c}{\partial x} \in \mathscr{C}(\overline{D_T}) \big\}, \text{ and }
	\label{eqn:c1_class} \\
	\mathscr{C}^{1,2}(\overline{D_T}) :={}& \big\{ c: \overline{D_T}\rightarrow \mathbb{R}\;:\; \dfrac{\partial c}{\partial t}, \dfrac{\partial^2 c}{\partial x^2} \in \mathscr{C}(\overline{D_T}) \big\}.
	\label{eqn:c12_class}
	\end{align}
\subsubsection*{Conservation of mass by the cell volume fraction equation}
\begin{lemma}[Continuous case]
	\label{lemma:mass_con}
If $(\check{\alpha},\check{u},\check{c},\check{\ell})$ is a solution of \eqref{system:nd_sys} such that $\check{\alpha}$ and $\check{u}$ belong to $\mathscr{C}^{1}(\overline{D_T})$, 
then $\check{\alpha}$ satisfies the mass conservation property 
	\begin{align}
	\int_{0}^{\check{\ell}(T)}\check{\alpha}(T,x)\,\mathrm{d}x = \int_{0}^{\ell_0} \alpha_0(x)\,\mathrm{d}x + \int_{0}^T \int_{0}^{\check{\ell}(t)} f(\check{\alpha},\check{c})\,\mathrm{d}x\,\mathrm{d}t.
	\label{eqn:mc}
	\end{align}
\end{lemma}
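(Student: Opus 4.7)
The plan is to apply the Leibniz (Reynolds) rule for differentiation under an integral with a moving boundary, substitute the cell volume fraction equation \eqref{eqn:vol_fraction}, and use the boundary conditions $\check{u}(t,0)=0$ and $\check{\ell}'(t)=\check{u}(t,\check{\ell}(t))$ to cancel the boundary flux against the Leibniz boundary term. The assumed regularity $\check{\alpha},\check{u}\in\mathscr{C}^1(\overline{D_T})$ is precisely what is needed to justify these manipulations pointwise in $t$.

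First I would define $M(t):=\int_0^{\check{\ell}(t)}\check{\alpha}(t,x)\,\mathrm{d}x$ and use Leibniz's rule to write
\begin{equation*}
M'(t)=\int_0^{\check{\ell}(t)}\partial_t\check{\alpha}(t,x)\,\mathrm{d}x+\check{\ell}'(t)\,\check{\alpha}(t,\check{\ell}(t)).
\end{equation*}
Substituting \eqref{eqn:vol_fraction} in the form $\partial_t\check{\alpha}=-\partial_x(\check{u}\check{\alpha})+\check{\alpha}f(\check{\alpha},\check{c})$ and applying the fundamental theorem of calculus to the flux term yields
\begin{equation*}
M'(t)=-\check{u}(t,\check{\ell}(t))\check{\alpha}(t,\check{\ell}(t))+\check{u}(t,0)\check{\alpha}(t,0)+\int_0^{\check{\ell}(t)}\check{\alpha}(t,x)f(\check{\alpha},\check{c})\,\mathrm{d}x+\check{\ell}'(t)\check{\alpha}(t,\check{\ell}(t)).
\end{equation*}
Invoking \eqref{eqn:bdr_cond_1} at $x=0$ eliminates the second term, while \eqref{eqn:bd_velocity} forces the first term to cancel with the Leibniz contribution $\check{\ell}'(t)\check{\alpha}(t,\check{\ell}(t))$.

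Once this cancellation is carried out, only the source integral remains, and integrating $M'(t)$ from $0$ to $T$, together with $M(0)=\int_0^{\ell_0}\alpha_0(x)\,\mathrm{d}x$ (from \eqref{eqn:in_cond}), yields \eqref{eqn:mc}. There is no substantive obstacle: the whole argument is essentially a one-line application of the Reynolds transport theorem, and the only subtlety is verifying that the boundary terms arising from the Leibniz rule and from the $\partial_x(\check{u}\check{\alpha})$ flux indeed combine to zero --- which is exactly the geometric meaning of \eqref{eqn:bd_velocity}, namely that the tumour interface moves with the cell velocity and therefore carries no net mass flux across it. (Note that if the statement is intended literally with $f(\check{\alpha},\check{c})$ rather than $\check{\alpha}f(\check{\alpha},\check{c})$ in the integrand, the same argument applies; otherwise the integrand in \eqref{eqn:mc} should read $\check{\alpha}f(\check{\alpha},\check{c})$.)
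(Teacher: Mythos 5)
Your proof is correct and follows essentially the same route as the paper: the paper integrates \eqref{eqn:vol_fraction} over $D_T$, applies the Leibniz rule to the time-derivative term and the fundamental theorem of calculus to the flux term, and cancels the boundary contributions using $\check{u}(t,0)=0$ and $\check{\ell}'(t)=\check{u}(t,\check{\ell}(t))$, which is exactly your computation with the order of the $t$-integration and the differentiation of $M(t)$ interchanged. Your parenthetical remark is also apt: since the source in \eqref{eqn:vol_fraction} is $\check{\alpha}f(\check{\alpha},\check{c})$, the integrand in \eqref{eqn:mc} should indeed read $\check{\alpha}f(\check{\alpha},\check{c})$, a slip that the paper's own statement and proof both carry.
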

\begin{proof}
	Integrate~\eqref{eqn:vol_fraction} over $D_T$ to obtain
	\begin{small}
	\begin{align}
 \hspace{-0.7cm}\int_{0}^T \int_{0}^{\check{\ell}(t)} f(\check{\alpha},\check{c})\,\mathrm{d}x\,\mathrm{d}t = \int_{0}^T \int_{0}^{\check{\ell}(t)} \dfrac{\partial \check{\alpha}}{\partial t}\,\mathrm{d}x\,\mathrm{d}t + \int_{0}^T \int_{0}^{\check{\ell}(t)} \dfrac{\partial }{\partial x}\left(\check{u}\check{\alpha}\right)\,\mathrm{d}x\,\mathrm{d}t.
	\label{eqn:mc_1}
	\end{align}
\end{small}
	In~\eqref{eqn:mc_1}, apply Leibniz integral rule for the first term on the right--hand side and integrate $\frac{\partial}{\partial x}(\check{u}\check{\alpha})$ in the second term over the interval $(0,\check{\ell}(t))$ to arrive at
	\begin{align}
	\int_{0}^T \dfrac{\partial }{\partial t} \left(\int_{0}^{\check{\ell}(t)} \check{\alpha}(t,x)\,\mathrm{d}x\right)\,\mathrm{d}x  - \int_{0}^T \left[\check{\ell}'(t) - \check{u}(t,\check{\ell}(t)) \right] \check{\alpha}(t,\check{\ell}(t))\,\mathrm{d}t \\
	- \int_{0}^T  \check{u}(t,0)  \check{\alpha}(t,0)\,\mathrm{d}t = \int_{0}^T \int_{0}^{\check{\ell}(t)} f(\check{\alpha},\check{c})\,\mathrm{d}x\,\mathrm{d}t.
	\label{eqn:mc_2}
	\end{align}
	In the left hand side of~\eqref{eqn:mc_2}, carry out the time integration over the interval $(0,T)$ in the first term, use the conditions $\check{\ell}'(t) = \check{u}(t,\check{\ell}(t))$ on the second term, and $\check{u}(t,0) = 0$ on the third term  obtain~\eqref{eqn:mc}. 
\end{proof}
\begin{remark}
	The result~\eqref{eqn:mc} states that the total cell volume fraction at time $T$ is the sum of two quantities:  (a) total cell volume fraction present initially and (b) the total cell volume fraction produced by the source term $f(\check{\alpha},\check{c})$ during the time interval $(0,T)$, which is precisely the mass conservation property.
\end{remark}
\begin{lemma}[Discrete case]
	\label{lemma:disc_conser}
	Let $\alpha_{h,\delta} : \mathscr{D}_T \rightarrow \mathbb{R}$ and $c_{h,\delta} : \mathscr{D}_T \rightarrow \mathbb{R}$ be the time--reconstructs corresponding to the family of functions $(\alpha_h^n)_{n}$ obtained from~\eqref{eqn:upwind} and $(c_{h}^n)_n$ obtained from~\eqref{eqn:bform_ot}, respectively. Then, $\alpha_{h,\delta}$ satisfies the discrete mass conservation property
	\begin{multline}
	\int_{0}^{\ell_m} \alpha_{h,\delta}(T,x)\,\mathrm{d}x  =  \int_{0}^{\ell_0} \alpha_0(x)\,\mathrm{d}x  \\
	+\int_{0}^{T} \int_{0}^{{\ell_m}} (\alpha_{h,\delta}(t,x) - \ath)^{+}(1 - \alpha_{h,\delta}(t,x)) \dfrac{(1 + s_1)\Pi_{h,\delta}c_{h,\delta}(t,x)}{1 + s_1\Pi_{h,\delta}c_{h,\delta}(t,x)} \,\mathrm{d}x\,\mathrm{d}t \\
	- \int_{\delta}^{T+\delta} \int_{0}^{{\ell_m}} (\alpha_{h,\delta}(t,x) - \ath)^{+}\dfrac{s_2 + s_3\Pi_{h,\delta}c_{h,\delta}(t,x)}{1 + s_4\Pi_{h,\delta}c_{h,\delta}(t,x)} \,\mathrm{d}x\,\mathrm{d}t.
	\label{eqn:dmass}
	\end{multline}
\end{lemma}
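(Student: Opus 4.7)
}
The natural route is to mimic the argument of Lemma~\ref{lemma:mass_con} at the discrete level: take the scheme \eqref{eqn:upwind}, multiply by the cell volume $h$ and the time step $\delta$, and sum over all degrees of freedom $j=0,\dots,J-1$ and time indices $n=1,\dots,N$, where $N\delta=T$. On the left--hand side, the time--difference part telescopes to $h\sum_{j=0}^{J-1}(\alpha^N_j-\alpha^0_j)$, which, by the piecewise--constant nature of $\alpha_{h,\delta}$ and the definition of $\alpha_h^0$ in Discrete scheme~\ref{defn:semi_disc_soln}, is exactly $\int_0^{\ell_m}\alpha_{h,\delta}(T,x)\dx - \int_0^{\ell_0}\alpha_0(x)\dx$ (the tail of $\alpha_0^{\mathrm{e}}$ vanishes outside $(0,\ell_0)$).

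The upwind flux term also telescopes, but in space. Writing the bracketed expression in \eqref{eqn:upwind} as $F^{n-1}_{j+1}-F^{n-1}_j$ with $F^{n-1}_j:=\up{(n-1)}{j}\ats{n-1}{j-1}-\um{(n-1)}{j}\ats{n-1}{j}$, the sum over $j$ collapses to $F^{n-1}_J-F^{n-1}_0$. Both endpoint fluxes vanish: at $j=0$ because the boundary condition imposes $u_0^{n-1}=0$ (and the auxiliary convention $\alpha^{n-1}_{-1}$ then does not matter), and at $j=J$ because $x_J=\ell_m$ lies outside $\overline{\Omega_h^{n-1}}$ and $u_h^{n-1}$ is extended by zero there, so $u_J^{n-1}=0$. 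Therefore the flux contribution contributes nothing to the balance.

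For the reaction terms I would use the elementary identity $h\{\!\!\{f\}\!\!\}_{\mathcal{X}_j}=\int_{\mathcal X_j}\Pi_h f\,\dx$ (each half of $\widetilde{\mathcal X}_j$ and $\widetilde{\mathcal X}_{j+1}$ covers $\mathcal X_j$ exactly with widths $h/2$), together with the pointwise identity $\Pi_h(g\circ f)=g(\Pi_h f)$ valid because $\Pi_h$ only samples nodal values. Applied to $g(c)=(1+s_1)c/(1+s_1c)$ and $g(c)=(s_2+s_3c)/(1+s_4c)$, this rewrites
\[
h\,b^{n-1}_j=\int_{\mathcal X_j}\frac{(1+s_1)\Pi_h c^{n-1}_h}{1+s_1\Pi_h c^{n-1}_h}\dx,\qquad
h\,d^{n-1}_j=\int_{\mathcal X_j}\frac{s_2+s_3\Pi_h c^{n-1}_h}{1+s_4\Pi_h c^{n-1}_h}\dx.
\]
Pulling the constants $(\alpha^{n-1}_j-\ath)^+(1-\alpha^{n-1}_j)$ and $(\alpha^n_j-\ath)^+$ inside these cell integrals (legal because $\alpha_{h,\delta}$ is constant on each $\mathcal X_j$ at each time level) converts the $j$--sum into an integral over $(0,\ell_m)$.

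The bookkeeping of time indices is the only delicate point, and where the main care is required. Reindexing $m=n-1$, the birth term $h\delta\sum_{n=1}^{N}\sum_j(\alpha^{n-1}_j-\ath)^+(1-\alpha^{n-1}_j)b^{n-1}_j$ becomes a sum over $m=0,\dots,N-1$, and since $\alpha_{h,\delta}(t,\cdot)=\alpha^m$ and $\Pi_{h,\delta}c_{h,\delta}(t,\cdot)=\Pi_h c^m$ simultaneously on $\mathcal T_m$, this is exactly the integral over $(0,T)\times(0,\ell_m)$ in the first RHS term of \eqref{eqn:dmass}. For the death term $-h\delta\sum_{n=1}^{N}\sum_j(\alpha^n_j-\ath)^+d^{n-1}_j$, I would perform the substitution $s=t+\delta$ on $\mathcal T_{n-1}$: since $\alpha_{h,\delta}(s,\cdot)=\alpha^n$ on $[t_n,t_{n+1})$ and $\Pi_{h,\delta}c_{h,\delta}(s,\cdot)$ reads off the corresponding oxygen data in the shifted interval $[\delta,T+\delta]$, the sum reassembles into exactly the integral over $(\delta,T+\delta)\times(0,\ell_m)$ appearing in the statement. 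Combining the three contributions yields \eqref{eqn:dmass}; the main obstacle, and the only place where a careful reading is needed, is precisely this mismatched indexing between $\alpha^n$ and $d^{n-1}$, which is what forces the time window $(\delta,T+\delta)$ (rather than $(0,T)$) in the death term.
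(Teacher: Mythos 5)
Your proposal is correct and follows essentially the same route as the paper's proof: multiply \eqref{eqn:upwind} by $h\delta$, sum over $j=0,\dots,J-1$ and $n=1,\dots,N$, telescope in time and (via $u_0^{n-1}=u_J^{n-1}=0$) in space, and identify the remaining reaction sums with integrals of the time--reconstructs. Your explicit identity $h\dcurly{f}_{\mathcal{X}_j}=\int_{\mathcal{X}_j}\Pi_h f\,\dx$ merely spells out the step the paper compresses into ``use the definitions of $b_j^n$ and $d_j^n$'', and your bookkeeping of the shifted window $(\delta,T+\delta)$ for the death term matches the paper's treatment.
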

\begin{proof}
	Sum~$h\times$\eqref{eqn:upwind} written for $j = 0,\ldots,J-1$ and $n = 1,\ldots,N$ and use the fact that $u_{0}^{n-1} = 0 = u_{J}^{n-1}$ to obtain
	\begin{align}
	\sum_{j=0}^{J-1}  h\ats{N}{j}  - \sum_{j=0}^{J-1}h\ats{0}{j}  ={}& \sum_{n=1}^{N}\delta \sum_{j=0}^{J-1}  h(\ats{n-1}{j} - \ath)^{+}(1-\ats{n-1}{j}) b_{j}^{n-1} \\
	&- \sum_{n=1}^{N}\delta \sum_{j=0}^{J-1} h(\ats{n}{j} - \ath)^{+}d_{j}^{n-1}.
	\label{eqn:dmc_1}
	\end{align}
	Note that each term in the sum $[\up{(n-1)}{j+1}\ats{n-1}{j} - \um{(n-1)}{j+1}\ats{n-1}{j+1} - \up{(n-1)}{j}\ats{n-1}{j-1} + \um{(n-1)}{j}\ats{n-1}{j}]$ in~\eqref{eqn:upwind} cancels with the same term of opposite sign coming from~\eqref{eqn:upwind} written for $j+1$ or $j-1$, and that boundary terms vanish due to the boundary conditions. Use the definitions of $b_j^n$ and $d_j^n$ (see \ref{ds.a} in Definition~\ref{defn:disc_soln}) and the definition of the time--reconstruct (see Definition~\eqref{defn:disc_soln}) to arrive at~\eqref{eqn:dmass} from~\eqref{eqn:dmc_1}.
\end{proof}
\subsubsection*{Nonnegativity and boundedness of the oxygen tension equation}
\begin{lemma}[Continuous case]
	If $\check{c}$ satisfies~\eqref{eqn:oxygen_tension} with $\check{\alpha}\ge 0$ and belongs to $\mathscr{C}^{2}(\overline{D_T})$, then $0 \le \check{c} \le 1$.
	\label{lemma:c_pos}
\end{lemma}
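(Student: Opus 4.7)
The plan is to prove the two-sided bound by the standard parabolic energy/testing trick, treating the lower and upper bound separately via the negative and positive parts; the $\mathscr{C}^{1,2}(\overline{D_T})$ regularity of $\check{c}$ lets one justify all derivatives and integrations by parts, and the moving-boundary contributions will vanish because of the Dirichlet condition $\check{c}=1$ on $B_T$.

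For the lower bound $\check{c}\ge 0$, I would multiply \eqref{eqn:oxygen_tension} by $-\check{c}^-=\min(\check{c},0)$ (Lipschitz in $\check{c}$, hence sufficiently regular given $\check{c}\in\mathscr{C}^{1,2}$) and integrate over $D_t=\cup_{0<s<t}(\{s\}\times(0,\check{\ell}(s)))$. The chain rule gives $(-\check{c}^-)\partial_s\check{c}=\partial_s\bigl((\check{c}^-)^2/2\bigr)$; the Leibniz rule on the moving domain converts the time integral into $\int_0^{\check{\ell}(t)}(\check{c}^-(t,x))^2/2\,dx$ minus a boundary contribution at $x=\check{\ell}(s)$, which vanishes because $\check{c}=1$ on $B_T$ forces $\check{c}^-=0$ there, and minus the initial contribution, which vanishes because $c_0\ge 0$ yields $\check{c}^-(0,\cdot)=0$. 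For the diffusion term, integrating by parts in $x$ produces boundary terms at $x=0$ (killed by $\partial_x\check{c}=0$) and at $x=\check{\ell}(s)$ (killed by $\check{c}^-=0$), leaving $\lambda\int(\partial_x\check{c}^-)^2\,dx\,ds$. Altogether one arrives at
\begin{equation*}
\int_0^{\check{\ell}(t)}\frac{(\check{c}^-(t,x))^2}{2}\,dx + \lambda\int_0^t\int_0^{\check{\ell}(s)}(\partial_x \check{c}^-)^2\,dx\,ds = -\int_0^t\int_0^{\check{\ell}(s)}\frac{Q\check{\alpha}\,\check{c}}{1+\widehat{Q}_1|\check{c}|}\check{c}^-\,dx\,ds.
\end{equation*}
On the set $\{\check{c}<0\}$ where $\check{c}^->0$, the right-hand integrand equals $-Q\check{\alpha}\check{c}^2/(1+\widehat{Q}_1|\check{c}|)\le 0$ (using $\check{\alpha}\ge 0$), so the right-hand side is nonpositive while the left is nonnegative. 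This forces $\check{c}^-\equiv 0$, i.e.\ $\check{c}\ge 0$.

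For the upper bound $\check{c}\le 1$, set $w:=\check{c}-1$. Then $w$ satisfies the same PDE with source $-Q\check{\alpha}(w+1)/(1+\widehat{Q}_1|w+1|)$, Neumann condition $\partial_x w(t,0)=0$, Dirichlet condition $w(t,\check{\ell}(t))=0$, and initial data $w(0,\cdot)=c_0-1\le 0$. I would repeat the argument with $w^+$ in place of $\check{c}^-$: the boundary and initial terms vanish by the same considerations, and on the set $\{w>0\}$ (equivalently $\check{c}>1$) the right-hand side integrand becomes $-Q\check{\alpha}(w+1)w^+/(1+\widehat{Q}_1(w+1))\le 0$. The resulting energy identity forces $w^+\equiv 0$, i.e.\ $\check{c}\le 1$.

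The only genuinely delicate point is the accounting on the moving domain: one must verify that the Leibniz contribution from $B_T$ and the boundary terms from integration by parts in $x$ all vanish simultaneously. This is precisely why the argument uses $\check{c}^-$ (resp.\ $w^+$) — each of these vanishes at $x=\check{\ell}(s)$ because $\check{c}=1$ there, and the Neumann condition at $x=0$ kills the other boundary term. No other features of the model are needed beyond $\check{\alpha}\ge 0$, so the conclusion is independent of any sign or growth information on $\check{\alpha}$, $f$, etc.
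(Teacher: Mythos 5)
Your proposal is correct and follows essentially the same route as the paper: test with $-\check{c}^-$ (resp.\ $(\check{c}-1)^+$), use the chain rule, the Leibniz rule on the moving domain, and the boundary conditions to kill all boundary contributions, then conclude from the sign of the reaction term that the negative (resp.\ positive) part vanishes. The only blemish is a sign slip in your displayed energy identity — multiplying the source $-Q\check{\alpha}\check{c}/(1+\widehat{Q}_1|\check{c}|)$ by $-\check{c}^-$ yields $+\int\!\!\int Q\check{\alpha}\,\check{c}\,\check{c}^-/(1+\widehat{Q}_1|\check{c}|)$, not the minus sign you wrote — but your subsequent evaluation of the integrand as $-Q\check{\alpha}\check{c}^2/(1+\widehat{Q}_1|\check{c}|)\le 0$ on $\{\check{c}<0\}$ is the correct one, so the argument stands.
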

\begin{proof}
	\textbf{Positivity: }Multiply~\eqref{eqn:oxygen_tension} by the test function $-\check{c}^{-} = \min(\check{c},0)$ and integrate the product on the domain $D_T$ to obtain
	\begin{small}
	\begin{align}
\hspace{-0.5cm}-\int_{0}^T \int_{0}^{\check{\ell}(t)} \check{c}^{-} \dfrac{\partial \check{c}}{\partial t}\,\mathrm{d}x\,\mathrm{d}t \;+\; \lambda \int_{0}^T \int_{0}^{\check{\ell}(t)} \check{c}^{-} \dfrac{\partial^2 \check{c}}{\partial x^2} \,\mathrm{d}x\,\mathrm{d}t = \int_{0}^T \int_{0}^{\check{\ell}(t)} \check{c}^{-} \dfrac{Q\check{\alpha}\check{c}}{1 + \widehat{Q}_1 |\check{c}|}\,\mathrm{d}x\,\mathrm{d}t.
	\label{eqn:pc_1}
	\end{align}
	\end{small}
	In~\eqref{eqn:pc_1}, use $-\check{c}^{-} \frac{\partial \check{c}}{\partial t} = \frac{1}{2} \frac{\partial}{\partial t} (\check{c}^{-})^2$  to transform the first term on the left--hand side and apply integration by parts to spatial integral in second term to obtain 
	\begin{multline}
\int_{0}^T \int_{0}^{\check{\ell}(t)}  \dfrac{1}{2} \dfrac{\partial}{\partial t} (\check{c}^{-})^2\,\mathrm{d}x\,\mathrm{d}t\,+\, \lambda \int_{0}^T \int_{0}^{\check{\ell}(t)} \left| \dfrac{\partial \check{c}}{\partial x} \right|^2 \,\mathrm{d}x\,\mathrm{d}t \,+\, \lambda \int_{0}^T \check{c}^{-}(t,\check{\ell}(t)) \dfrac{\partial \check{c}}{\partial x} (t,\check{\ell}(t))\,\mathrm{d}t   \\ 
	-\lambda \int_{0}^T \check{c}^{-}(t,0) \dfrac{\partial \check{c}}{\partial x} (t,0)\,\mathrm{d}t = \int_{0}^T \int_{0}^{\check{\ell}(t)} \check{c}^{-} \dfrac{Q\check{\alpha}\check{c}}{1 + \widehat{Q}_1 |\check{c}|}\,\mathrm{d}x\,\mathrm{d}t.
	\label{eqn:pc_2}
	\end{multline}
	Apply Leibniz integral rule on the first term in the left hand side of~\eqref{eqn:pc_2} and use the facts that $\check{c}^{-}(t,\check{\ell}(t)) = 0$ and $\frac{\partial \check{c}}{\partial x} (t,0) = 0$ to arrive at 
	\begin{align}
\dfrac{1}{2}\int_{0}^{T} \dfrac{\partial }{\partial t} \left(  \int_{0}^{\check{\ell}(t)} (\check{c}^{-})^2\,\mathrm{d}x\right)\mathrm{d}t \,+\, \lambda \int_{0}^T \int_{0}^{\check{\ell}(t)} \left| \dfrac{\partial \check{c}}{\partial x} \right|^2\,\mathrm{d}x\,\mathrm{d}t \\
= \int_{0}^T \int_{0}^{\check{\ell}(t)} \check{c}^{-} \dfrac{Q\check{\alpha}\check{c}}{1 + \widehat{Q}_1 |\check{c}|}\,\mathrm{d}x\,\mathrm{d}t.
	\label{eqn:pc_3}
	\end{align}
	Carry out the time integration over the interval $(0,T)$ in first term in the left hand side of~\eqref{eqn:pc_3} and use the fact that $\check{c}^{-}(0,\cdot) = 0$ to obtain
	\begin{align}
	\lambda \int_{0}^T \int_{0}^{\check{\ell}(t)} \left| \dfrac{\partial \check{c}}{\partial x} \right|^2\,\mathrm{d}x\,\mathrm{d}t + \int_{0}^T \int_{0}^{\check{\ell}(t)}  \dfrac{Q\check{\alpha}}{1 + \widehat{Q}_1 |\check{c}|} \left(\check{c}^{-}\right)^2\,\mathrm{d}x\,\mathrm{d}t \le 0. 
	\label{eqn:pc_4}
	\end{align}
	This relation shows that $\partial_x\check{c}^-=0$ and thus, since $\check{c}^{-}(t,\check{\ell}(t)) = 0$, that $\check{c}^-=0$. This proves that $\check{c} \ge 0$ almost everywhere on $D_T$.
	
	\medskip
	\noindent\textbf{Boundedness: } Multiply~\eqref{eqn:oxygen_tension} by the test function $(\check{c} - 1)^{+} = \max(\check{c} - 1,0)$ and integrate the product on the domain $D_T$ to obtain 
	\begin{align} \int_{0}^T\int_{0}^{\check{\ell}(t)} (\check{c} - 1)^{+}\dfrac{\partial \check{c}}{\partial t}\,\mathrm{d}x\,\mathrm{d}t - \lambda  \int_{0}^T\int_{0}^{\check{\ell}(t)} (\check{c} - 1)^{+} \dfrac{\partial^2 \check{c}}{\partial x^2} \,\mathrm{d}x\,\mathrm{d}t \\
	= -\int_{0}^T\int_{0}^{\check{\ell}(t)}  \dfrac{Q \check{\alpha}}{1 + \widehat{Q}_1|\check{c}|} \check{c} (\check{c} - 1)^{+}.
	\label{eqn:bp_1}
	\end{align}
	In~\eqref{eqn:bp_1}, use $(\check{c} - 1)^{+}\frac{\partial \check{c}}{\partial t} = \frac{1}{2} \frac{\partial }{\partial t} \left((\check{c} - 1)^{+}\right)^2$ to transform the first term in the left--hand side, apply integration by parts to the spatial integral in the second term, and use the condition~\eqref{eqn:bdr_cond_2} to obtain
	\begin{align}
\int_{0}^T \int_{0}^{\check{\ell}(t)} \dfrac{1}{2} \dfrac{\partial }{\partial t} ((\check{c} - 1)^{+} )^2\,\mathrm{d}x\,\mathrm{d}t + \int_{0}^T \int_{0}^{\check{\ell}(t)} \left|\dfrac{\partial }{\partial x} (\check{c} - 1)^{+} \right|^2\,\mathrm{d}x\,\mathrm{d}t \\
	=  -\int_{0}^T\int_{0}^{\check{\ell}(t)}  \dfrac{Q \check{\alpha}}{1 + \widehat{Q}_1|\check{c}|} \check{c} (\check{c} - 1)^{+}.
	\label{eqn:bp_2}
	\end{align}
	Apply Leibniz integral rule on the first term in the left hand side of~\eqref{eqn:bp_2}, carry out the time integration over the interval $(0,T)$, and use the condition~\eqref{eqn:in_cond} to obtain 
	\begin{align}
	\int_{0}^T \int_{0}^{\check{\ell}(t)} \left|\dfrac{\partial }{\partial x} (\check{c} - 1)^{+} \right|^2\,\mathrm{d}x\,\mathrm{d}t + \int_{0}^T\int_{0}^{\check{\ell}(t)}  \dfrac{Q \check{\alpha}}{1 + \widehat{Q}_1|\check{c}|} ((\check{c} - 1)^{+})^2 \le 0.
	\label{eqn:bp_3}
	\end{align}
	Result~\eqref{eqn:bp_3} implies that $(\check{c}  -1)^{+} = 0$, which yields that $\check{c} \le 1$ almost everywhere on $D_T$.
\end{proof}
The positivity and boundedness results corresponding to the discrete oxygen tension $c_{h,\delta}$, obtained from the numerical scheme~\eqref{eqn:bform_ot}, is provided in Lemma~\ref{lemma:dot_max}.

\setcounter{equation}{0}
\subsection{Identities and results}
\label{appen_B}
\renewcommand{\theequation}{$\mathrm{C}$.\arabic{equation}}
\begin{enumerate}[label= $\mathrm{C}$.\Roman*.,ref=$\mathrm{C}$.\Roman*]
	\item\label{appen_id.a}  If $a,b,c,d \in \mathbb{R}$, then the following identities hold:
	\begin{subequations}
	\begin{align}
	ab - cd  &= \dfrac{(a+c)(b-d)}{2} + \dfrac{(a-c)(b+d)}{2},
	\label{eqn:split_id_1} \\
	ab - cd &= (a - c)b + (b - d)c, 
	\label{eqn:split_id_2} \\
	2ab &\le a^2 + b^2, \text{ and } \label{eqn:split_id_3}\\
	a &= a^{+} - a^{-},\,|a| = a^{+} + a^{-},
	\label{eqn:maxmin}
	\end{align}
		\end{subequations}
	where  $a^{+} = \max(a,0)$ and $a^{-} = -\min(a,0)$.
	\item\label{appen_id.b} \textbf{Discrete integration by parts formula. \cite[Section D.1.7]{droniou2018gradient}} 
	For any families $(a_n)_{n=0,\ldots,N}$ and $(b_n)_{n=0,\ldots,N}$ of real numbers, it holds
	\begin{align}
	\sum_{n=0}^{N-1} (a_{n+1} - a_{n})b_{n} = -\sum_{n=0}^{N-1} a_{n+1} (b_{n+1} - b_{n}) + a_{N}b_{N} - a_{0}b_{0}.
	\label{eqn:disc_int_parts}
	\end{align}
	\item\label{appen_id.c} \textbf{Theorem (Helly's selection theorem). \cite[Theorem 4, p.~176]{Evans20151}.} 
		Let $\Omega \subset \mathbb{R}^d$ ($d \ge 1$) be an open and bounded set with a Lipschitz boundary $\partial \Omega$, and $(f_n)_{n\in \mathbb{N}}$ be a sequence in $BV(\Omega)$ such that $(||f_{n}||_{BV(\Omega)})_n$ is uniformly bounded. Then, there exists a subsequence $(f_n)_n$ up to re-indexing and a function $f \in BV(\Omega)$ such that as $n \rightarrow \infty$, $f_n \rightarrow f$ in $L^1(U)$ and almost everywhere in $\Omega$.
	\item\label{appen_id.d} \textbf{Theorem (discrete Aubin--Simon theorem). \cite[Theorem C.8]{droniou2018gradient}.}
		Let $p \in [1,\infty)$,  $(X_m,Y_m)_{m\in\mathbb{N}}$ be a compactly--continuously embedded sequence in a Banach space $B$, and $(f_m)_{m \in \mathbb{N}}$ be a sequence in $L^p(0,T;B)$, where $T > 0$ such that the  \edit{assumptions} \ref{as:a},~\ref{as:b}, and~\ref{as:c} are satisfied.
		\begin{enumerate}[label= (\alph*),ref=(\alph*)]
			\item\label{as:a} Corresponding to each $m \in \mathbb{N}$, there exists an $N \in \mathbb{N}$, a partition $0 = t_{0} < \cdots < t_{N} = T$, and a finite sequence $(g_n)_{n=0,\cdots,N}$ in $X_m$ such that $\forall\,n \in \{0,\ldots,N-1\}$ and almost every $t \in (t_{n},t_{n+1})$, $f_{m}(t) = g_{n}$. Then, the discrete derivative $\delta_{m}f_{m}$ is defined almost everywhere by $\delta_m f_m(t) := (g_{n+1} - g_{n})/(t_{n+1}-t_{n})$ on $(t_n,t_{n+1})$ for all $n \in \{0,\ldots,N-1\}$.
			\item\label{as:b}
			The sequence $(f_{m})_{m \in \mathbb{N}}$ is bounded in $L^p(0,T;B)$.
			\item \label{as:c}The sequences $(||f_m||_{L^p(0,T;X_m)})_m$ and $(||\delta_m f_m||_{L^1(0,T;Y_m)})_m$ are bounded.
		\end{enumerate}
	Then, $(f_m)_{m\in N}$ is relatively compact in $L^p(0,T;B)$.
	\item\label{it:con} \begin{enumerate}[label= (\alph*),ref=\ref{it:con}(\alph*)]
		\item \label{appen_id.e} \textbf{Lemma (weak-strong convergence). \cite[Lemma D.8]{droniou2018gradient}.}
		If $p \in [0,\infty)$ and $q := p/(1-p)$ are conjugate exponents, $f_n \rightarrow f$ strongly in $L^p(X)$, and $g_n \halfarrow g$ weakly in $L^{q}(X)$, where $(X,\mu)$ is a measured space, then 
		\begin{align}
		\int_{X} f_{n}g_{n}\,\mathrm{d}\mu \rightarrow \int_{X} fg\,\mathrm{d}\mu.
		\end{align}
		The next result follows from Lebesgue's dominated convergence theorem. 
		\item\label{appen_id.f} \textbf{Lemma (bounded-strong convergence).}
		If $f_n \rightarrow f$ in $L^2(X)$, $g_n \rightarrow g$ almost everywhere on $X$, $||g_n||_{L^\infty(X)}$ is uniformly bounded, then $f_ng_n$ converges to $fg$ in $L^2(X)$.	
	\end{enumerate}
\item \label{appen_id.g} \textbf{Lemma \cite[Theorems~3.1, 3.2]{Thomee200811}.} Let $D$ be an $n \times n$ diagonal matrix with positive entries, $A$ be an $n \times n$ matrix with all off--diagonal entries nonpositive, and $\mathbb{I}_n$ be $n \times n$ identity matrix. Then, the operator $(\mathbb{I}_n + k D^{-1}S)^{-1}$ is positive for sufficiently small $k > 0$. 
	\end{enumerate}
\end{document}